\newcommand{\ra}{\rightarrow}
\newcommand{\Zz}{\mathbb{Z}}
\newcommand{\Qq}{\mathbb{Q}}
\newcommand{\qq}{{\mathbb{Q}}}
\newcommand{\fp}{{\mathfrak{p}}}
\newcommand{\fq}{{\mathfrak{q}}}
\newcommand{\fP}{{\mathfrak{P}}}
\newcommand{\fQ}{{\mathfrak{Q}}}
\newcommand{\mS}{\mathcal{S}} 
{\theoremstyle{plain}
\newtheorem{theorem}{Theorem}[section]    

\newtheorem{twisting lemma}[theorem]{Twisting lemma}
\newtheorem{lemma}[theorem]{Lemma}       
\newtheorem{proposition}[theorem]{Proposition}  
\newtheorem{corollary}[theorem]{Corollary}   

\newtheorem{question}[theorem]{Question}
}
{\theoremstyle{remark}
\newtheorem{definition}[theorem]{Definition}      
\newtheorem{remark}[theorem]{Remark}   

}
\newcommand\oline[1] {{\overline{#1}}}
\DeclareMathOperator\Gal{Gal}
\def\Frac{\hbox{\rm Frac}}
\def\cm{\hbox{\hbox{\rm C}\kern-5pt{\raise 1pt\hbox{$|$}}}}
\def\lhfl#1#2{\smash{\mathop{\hbox to 12mm{\leftarrowfill}}
\limits^{#1}_{#2}}}
\def\rhfl#1#2{\smash{\mathop{\hbox to 12mm{\rightarrowfill}}
\limits^{#1}_{#2}}}
\def\build#1_#2^#3{\mathrel{
\mathop{\kern 0pt#1}\limits_{#2}^{#3}}}
\def\htrait#1#2{\smash{\mathop{\hbox to 12mm{\hrulefill}}
\limits^{#1}_{#2}}}
\def\sxbullet{{\raise 2pt\hbox{\bf .}}}
\numberwithin{equation}{section}
\begin{document}

\title[The local behaviour of specializations]{On the local behaviour of specializations of function field extensions}

\author{Joachim K{\"{o}}nig}

\email{koenig.joach@technion.ac.il}

\address{Department of Mathematics, Technion - Israel Institute of Technology, Haifa 32000, Israel}

\author{Fran\c cois Legrand}

\email{legrandfranc@technion.ac.il}

\address{Department of Mathematics, Technion - Israel Institute of Technology, Haifa 32000, Israel}

\author{Danny Neftin}

\email{dneftin@technion.ac.il}

\address{Department of Mathematics, Technion - Israel Institute of Technology, Haifa 32000, Israel}

\date{\today}

\keywords{Galois theory, local behaviour, specializations, Grunwald problems, crossed products, parametric sets} 

\maketitle

\begin{abstract}
Given a field $k$ of characteristic zero and an indeterminate $T$ over $k$, we investigate the local behaviour at primes of $k$ of finite Galois extensions of $k$ arising as specializations of finite Galois extensions $E/k(T)$ (with $E/k$ regular) at points $t_0 \in \mathbb{P}^1(k)$. We provide a general result about decomposition groups at primes of $k$ in specializations, extending a fundamental result of Beckmann concerning inertia groups. We then apply our result to study crossed products, the Hilbert--Grunwald property, and finite parametric sets.
\end{abstract}

\section{Introduction} \label{sec:intro}

\subsection{Grunwald problems} \label{sec:intro_grunwald}

Given a number field $k$, {\textit{Grunwald problems}} concern the structure of completions $L_\fp/k_\fp$ of {\textit{$G$-extensions}} $L/k$ at any finitely many given primes $\fp$ of $k$. Here, a $G$-extension $L/k$ is a finite Galois extension of $k$ with Galois group $G$, and $k_\fp$ (resp., $L_\fp$) denotes the completion of $k$ (resp., of $L$) at $\fp$ (resp., at a prime $\fP$ of $L$ lying over $\fp$)\footnote{Recall that the latter is independent of the choice of $\fP$ (up to $k_\fp$-isomorphism).}. The original motivation for such problems arose from their key role in the structure theory of finite dimensional division algebras over number fields. 
Since then, they have received much attention, especially due to connections with the regular inverse Galois problem; see \cite{DG11,DG12}, and with weak approximation; see, e.g., \cite{Har07, Dem10, LA14, DLN17}. 

More precisely, given a finite set $\mathcal{S}$ of primes of $k$,  and given finite Galois extensions $L^{(\fp)}/k_\fp$, $\fp \in \mathcal{S}$,  with Galois groups embedding into $G$, the Grunwald problem $(G, (L^{(\fp)}/k_\fp)_{\fp\in \mathcal{S}})$ asks whether there is a $G$-extension $L/k$ whose completion $L_\fp$ at $\fp$ is $k_\fp$-isomorphic to $L^{(\fp)}$, $\fp \in  \mathcal{S}$. If such an extension $L/k$ exists, it is called a {\textit{solution}} to the Grunwald problem $(G, (L^{(\fp)}/k_\fp)_{\fp\in \mathcal{S}})$. We note that, instead of prescribing the local extensions $L_{\fp}/k_\fp$, $\fp\in \mathcal{S}$, weaker versions ask for the existence of a $G$-extension $L/k$ with prescribed local degrees  $[L_\fp:k_\fp]$, $\fp\in \mathcal{S}$, or with prescribed local Galois groups $\Gal(L_\fp/k_\fp)$, $\fp\in \mathcal{S}$. These weaker versions usually suffice for applications to classical problems, see, e.g., \cite[Corollary 2]{Wan50} and \cite{Sch68}. 

Examples of Grunwald problems $(G,(L^{(\fp)}/k_\fp)_{\fp\in \mathcal{S}})$ with no solution $L/k$ occur already for cyclic groups $G$, when $\mathcal{S}$ contains a prime of $k$ lying over $2$ \cite{Wan48}. However, it is expected \cite[\S 1]{Har07} that, for solvable groups $G$, every Grunwald problem $(G,(L^{(\fp)}/k_\fp)_{\fp\in \mathcal{S}})$ has a solution, provided $\mathcal{S}$ is disjoint from some finite set $\mS_{\rm{exc}}$ of ``exceptional" primes of $k$, depending only on $G$ and $k$. This is known when (1) $G$ is abelian, and $\mS_{\rm{exc}}$ is the set of primes of $k$ dividing $2$ \cite[(9.2.8)]{NSW08}; (2) $G$ is an iterated semidirect product $A_1 \rtimes (A_2 \rtimes \cdots \rtimes A_n)$ of finite abelian groups, and   $\mathcal{S}_{\rm{exc}}$ is the set of primes of $k$ dividing $|G|$; see \cite[Th\'eor\`eme 1]{Har07} and \cite[Theorem 1.1]{DLN17}; (3) $G$ is solvable of order prime to the number of roots of unity in $k$, and $\mS_{\rm{exc}}=\emptyset$ \cite[(9.5.5)]{NSW08}; and (4) there exists a {\textit{generic extension}} for $G$ over $k$, and $\mS_{\rm{exc}}=\emptyset$ \cite[Theorem 5.9]{Sal82}. Among the above, the latter is the only method which applies to non-solvable groups. However, the family of non-solvable groups for which a generic extension is known is quite restricted, e.g., it is unknown whether the alternating group $A_n$ has a generic extension for $n\ge 6$. See \cite{JLY02} for an overview on generic extensions. 

The main source of realizations of non-solvable groups $G$ over $k$ is via {\textit{$k$-regular $G$-extensions}}, that is, via $G$-extensions $E/k(T)$, where $T$ is an indeterminate over $k$ and $k$ is algebraically closed in $E$. Indeed, by {\textit{Hilbert's irreducibility theorem}}, every non-trivial $k$-regular $G$-extension $E/k(T)$ has infinitely many linearly disjoint {\textit{specializations}} $E_{t_0}/k$, $t_0\in \mathbb{P}^1(k)$, with Galois group $G$. Many groups have been realized by this method; see, e.g., \cite{MM99}, and references within, as well as \cite{Zyw14} for more recent examples.

This specialization process provides a natural way to attack Grunwald problems for {\textit{$k$-regular Galois groups}}, that is, for finite groups $G$ admitting a $k$-regular $G$-extension of $k(T)$. Namely, given such an extension $E/k(T)$, it is natural to ask for the local behaviour of specializations $E_{t_0}/k$, $t_0\in \mathbb{P}^1(k)$. That is,  which local extensions $L^{(\fp)}/k_\fp$, which local Galois groups $\Gal(L^{(\fp)}/k_\fp)$, and which local degrees $[L^{(\fp)}:k_\fp]$ arise by completing the specialization $E_{t_0}/k$ at primes $\fp$ of $k$, when $t_0$ runs over $ \mathbb{P}^1(k)$? For points $t_0\in \mathbb{P}^1(k)$ which are $\fp$-adically far from {\textit{branch points}} of $E/k(T)$, this approach was deeply investigated  by D\`ebes and Ghazi \cite{DG11, DG12}\footnote{See \cite[\S1.6]{DG12} for a review of related previous results.}, and applies only to unramified local extensions. Namely, given a $k$-regular $G$-extension $E/k(T)$, a finite set $\mathcal{S}$ of primes of $k$, disjoint from some finite set $\mS_{\rm{exc}}:=\mS_{\rm{exc}}(E/k(T))$  (depending only on $E/k(T)$), and unramified extensions $L^{(\fp)}/k_\fp$ with Galois group embedding into $G$, $\fp\in \mathcal{S}$, \cite{DG12} provides $t_0\in k$ such that $E_{t_0}/k$ is a solution  to  the Grunwald problem $(G,(L^{(\fp)}/k_\fp)_{\fp\in \mathcal{S}})$. 

\subsection{Main results} \label{sec:intro_local_behaviour}

The goal of this paper is to study the local behaviour at a prime $\fp$ of $k$ of specializations $E_{t_0}/k$, when $t_0$ is $\fp$-adically close to a branch point of $E/k(T)$.

\subsubsection{Background}

A first related conclusion can be derived from the algebraic cover theory of Grothendieck. Namely, if $\fp$ is not in some finite set $\mathcal{S}_{\rm{exc}}:=\mS_{\rm{exc}}(E/k(T))$ of primes of $k$ and if $\fp$ ramifies in the specialization $E_{t_0}/k$, then, 
$t_0$ and some branch point $t_i$ of $E/k(T)$ {\textit{meet modulo $\fp$}} \footnote{Note that the set $\mathcal{S}_{\rm{exc}}$ is chosen such that every $t_0$ meets at most one branch point modulo $\fp$.
}. In the special case where $t_i$ is $k$-rational and $v_\fp(t_0)$ is non-negative, this means that $a_\fp:=v_\fp(t_0-t_i)$ is positive, where $v_\fp$ denotes the normalized $\fp$-adic valuation. 
A fundamental theorem by Beckmann \cite{Bec91, Con00} then asserts that the inertia group $I_{t_0,\fp}$ of $E_{t_0}/k$ at $\fp$ is determined by $a_\fp$ and the inertia group $I_{t_i}$ at a fixed prime of $E$ lying over the prime $\mathcal{P}_i$ of $k[T-t_i]$ generated by $T-t_i$. 
Namely, 
$I_{t_0,\fp}$ 
is conjugate to $I_{t_i}^{a_\fp}$. We refer to \S\ref{sec:basics_inertia} for more details.

\subsubsection{Decomposition groups of specializations} \label{sec:intro_thm_main}

Given $t_0\in \mathbb{P}^1(k)$, assumed to meet the branch point $t_i$ modulo $\fp$, we show that the entire  decomposition group at $\fp$ of the specialization $E_{t_0}/k$ is determined by the local behaviour at $t_i$, thus extending Beckmann's theorem. 
Namely, suppose for simplicity that $t_i$ is $k$-rational, and let $D_{t_i}$ denote the geometric decomposition group at a fixed prime of $E$ lying over the geometric prime $\mathcal{P}_i$. Recall that the specialization $E_{t_i}/k$ has Galois group $D_{t_i}/I_{t_i}$; let $\varphi_i:D_{t_i}\ra D_{t_i}/I_{t_i}$ be the natural projection. Let $D_{t_i,\fp}$ be the decomposition group at a prime $\fP$ of $E_{t_i}$ lying over $\fp$; this is a subgroup of $D_{t_i}/I_{t_i}$. Note that, up to conjugation, the subgroup $\varphi_i^{-1}(D_{t_i,\fp})$ of $G$ is independent of the choice of an arithmetic prime $\fP$ lying over $\fp$ and a geometric prime lying over $\mathcal{P}_i$. 

\begin{theorem} \label{thm:main}
There exists a finite set $\mathcal{S}_{\rm{exc}}:=\mathcal{S}_{\rm{exc}}(E/k(T))$ of primes of $k$ that satisfies the following property.
Suppose that the given prime $\fp$ of $k$ is outside $\mathcal{S}_{\rm{exc}}$. Moreover, suppose that the given branch point $t_i$ is $k$-rational, that $t_0$ and $t_i$ meet modulo $\fp$, and that the exponent $a_\fp$ is coprime to $|I_{t_i}|$. Then, the decomposition group of $E_{t_0}/k$ at $\fp$ is conjugate by an element of $G$ to $\varphi_i^{-1}(D_{t_i,\fp})$.
\end{theorem}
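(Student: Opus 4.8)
The plan is to compute the completion $(E_{t_0})_\fP$ at a prime $\fP$ of $E_{t_0}$ over $\fp$ by a purely local analysis of the cover near the branch point $t_i$, and then to read off its Galois group over $k_\fp$, which is by definition the decomposition group $D_{t_0,\fp}$. First I would enlarge the exceptional set $\mathcal{S}_{\rm exc}$ so that, for $\fp \notin \mathcal{S}_{\rm exc}$, the cover corresponding to $E/k(T)$ has good reduction at $\fp$ (a tame $G$-cover $\mathcal{X} \ra \mathbb{P}^1_{\mathcal{O}_\fp}$ with branch sections specializing to distinct points of the special fibre), the hypotheses of Beckmann's theorem hold, and---crucially---each of the finitely many branch-point specializations $E_{t_j}/k$ is unramified at $\fp$; the latter is legitimate since the fields $E_{t_j}$ depend only on $E/k(T)$, so only finitely many primes ramify in them. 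In particular $D_{t_i,\fp} = \Gal((E_{t_i})_\fP/k_\fp)$ is then the (cyclic) decomposition group of an unramified extension.

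Next I would localize at $\fp$: base-changing to $k_\fp$ keeps $E k_\fp(T)/k_\fp(T)$ regular Galois with group $G$ and $t_i$ a branch point, and the completion of $k_\fp(T)$ at $T = t_i$ is $k_\fp((u))$ with $u = T - t_i$. The key geometric point is that, since $t_0 \equiv t_i \pmod{\fp}$, the section $t_0$ meets the special fibre at the same closed point as the branch section $t_i$; consequently the completion $(E_{t_0})_\fP$ is obtained by substituting $u = u_0 := t_0 - t_i$ (which satisfies $v_\fp(u_0) = a_\fp$) into the local extension $M/k_\fp((u))$ cut out by the cover near $t_i$. Choosing the relevant factor of the fibre over $k_\fp((u))$ according to the chosen prime $\fP$, the Galois group of $M/k_\fp((u))$ is exactly the total (arithmetic-plus-geometric) decomposition group $\varphi_i^{-1}(D_{t_i,\fp})$: its inertia subgroup is $I_{t_i} = \ker\varphi_i$ and its unramified quotient is $D_{t_i,\fp}$, the residue extension being $(E_{t_i})_\fP/k_\fp$. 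Using tameness and the unramifiedness of $(E_{t_i})_\fP/k_\fp$, I would describe $M$ as the tower $k_\fp((u)) \subseteq (E_{t_i})_\fP((u)) \subseteq (E_{t_i})_\fP((w))$ with $w^e = u \cdot(\text{unit})$, where $e = |I_{t_i}|$.

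It then remains to specialize $u \mapsto u_0$ and compute $(E_{t_0})_\fP = (E_{t_i})_\fP(w)$ with $w^e = u_0\cdot(\text{unit})$. Here the hypothesis $\gcd(a_\fp, e) = 1$ enters decisively: over the local field $(E_{t_i})_\fP$ (which has residue characteristic prime to $e$ and, by the branch-cycle relation forced by the normality of $I_{t_i}$ in $D_{t_i}$, contains $\mu_e$), a Kummer extension generated by an $e$-th root of a uniformizer to the power $a_\fp$ is totally ramified of degree exactly $e$ precisely when $\gcd(a_\fp,e)=1$. Thus the ramified part contributes the full inertia $I_{t_i}$ (matching Beckmann's $I_{t_i}^{a_\fp} = I_{t_i}$), while the unramified part remains $(E_{t_i})_\fP$; hence $\Gal((E_{t_0})_\fP/k_\fp)$ is an extension of $D_{t_i,\fp}$ by $I_{t_i}$ sitting inside $G$ exactly as $\varphi_i^{-1}(D_{t_i,\fp})$. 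Tracking this identification canonically inside $G$ (inertia $\mapsto I_{t_i}$, residue Frobenius lifting the generator of $D_{t_i,\fp}$) yields the asserted conjugacy.

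The main obstacle I anticipate is this last computation: I must ensure the ramified extension contributes exactly $I_{t_i}$ and no extra unramified piece, which means controlling the ``unit'' in $w^e = u\cdot(\text{unit})$ and the roots of unity it might introduce upon taking an $e$-th root. This is where a careful choice of the good-reduction local model (so that the unit is essentially an $e$-th power, hence contributes nothing unramified) must be combined with the coprimality of $a_\fp$ and $e$. It is exactly this interface between the arithmetic of $\fp$ (Frobenius and $\mu_e$) and the geometry of the tame inertia that makes the theorem go beyond Beckmann, and I expect it to require the most care.
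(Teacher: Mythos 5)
Your overall strategy is the same as the paper's: identify the Galois group of the local extension at the branch point, over a Laurent-series field in $u=T-t_i$, with $\varphi_i^{-1}(D_{t_i,\fp})$; specialize $u\mapsto u_0=t_0-t_i$; and use the coprimality of $a_\fp$ and $e=|I_{t_i}|$ to see that neither the inertia nor the residue extension can drop. The identification of the local Galois group, the role of $\mu_e\subseteq E_{t_i}$, and the final degree count are all in order. However, there is one genuine gap, and it sits exactly where you place the entire weight of the argument: the step ``the completion of $E_{t_0}$ at $\fp$ is obtained by substituting $u=u_0$ into the local extension $M/k_\fp((u))$''. Evaluation at $u=u_0$ is not defined on $k_\fp((u))$: a series $\sum_n a_nu^n$ with $a_n\in k_\fp$ need not converge at an element $u_0$ of positive valuation, because the $v_\fp(a_n)$ may tend to $-\infty$ arbitrarily fast. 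So the field $M$, and in particular its generators (including your ``unit''), must first be shown to be defined over a subring on which substitution makes sense.

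The paper's proof is organized around exactly this point. It replaces $k_\fp((u))$ by $F=\Frac(R_\fp[[u]])$, the fraction field of the completion of $R[T]$ at the ideal generated by $\fp$ and $T-t_i$; on $R_\fp[[u]]$ the map $u\mapsto t_0-t_i$ is a genuine ring homomorphism onto $R_\fp$ whenever $v_\fp(t_0-t_i)>0$. Two things then have to be proved, both resting on Eisenstein's theorem on algebraic power series (Theorem \ref{thm:eisen}): that for all but finitely many $\fp$ the relevant algebraic functions have $\fp$-integral coefficients, and (Lemma \ref{lemma 1 step 2}) that $E\cdot F$ and $k_\fp((u))$ are linearly disjoint over $F$, so that $\Gal(E\cdot F/F)$ is still all of $\varphi_i^{-1}(D_{t_i,\fp})$ rather than a proper subgroup---descending from $k_\fp((u))$ to the much smaller field $F$ could a priori lose part of the group. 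Your appeal to a ``good-reduction local model'' gestures at this, but the uniform $\fp$-integrality of the coefficients outside a finite set of primes is the actual content and does not follow from tameness or from the branch sections meeting the special fibre correctly; it is a statement about the finitely many primes where Eisenstein denominators appear. Once this is supplied, the worry in your last paragraph largely evaporates: with $\mu_e\subseteq (E_{t_i})_\fp$ and $\gcd(a_\fp,e)=1$, the extension generated by an $e$-th root of an element of valuation $a_\fp$ is totally ramified of degree $e$ regardless of the unit, which is also how the paper closes Step IV (the Specialization Inertia Theorem forces $|I_{t_0,\fp}|=e$, whence the decomposition group cannot be a proper subgroup of $\varphi_i^{-1}(D_{t_i,\fp})$).
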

\noindent
Note that 
$\varphi_i^{-1}(D_{t_i,\fp})$ is the group generated by $I_{t_i}$ and a lift of the Frobenius of $E_{t_i}/k$ at $\fp$.  We refer to Theorem \ref{thm main} for a more general version of Theorem \ref{thm:main}, where we relax the assumptions on $t_i$ and $a_\fp$, and which is stated over more general base fields.

\subsubsection{Solving Grunwald problems} \label{sec:intro_fields}

Theorem \ref{thm:main} shows that the decomposition group of $E_{t_0}/k$ at $\fp$ is determined by the local data $(\varphi_i,D_{t_i,\fp})$ at $t_i$, when $t_0$ and $t_i$ meet modulo $\fp$. Theorem \ref{thm:fields} below shows that this is the only constraint on completions of $E_{t_0}/k$ at $\fp$ for such specialization points $t_0$. 

\begin{theorem} \label{thm:fields}
Suppose that $\mathcal{S}$ is a finite set of primes of $k$ disjoint from some finite set of primes $\mathcal{S}'_{\rm{exc}}:=\mS'_{\rm{exc}}(E/k(T))$. For each $\fp\in\mathcal{S}$, fix a $k$-rational branch point $t_{i(\fp)}$ of $E/k(T)$ and a finite Galois extension $L^{(\fp)}/k_\fp$ with Galois group (resp., inertia group) $\varphi_{i(\fp)}^{-1}(D_{t_{i(\fp)},\fp})$ (resp., $I_{t_{i(\fp)}}$). Then, there exists $t_0\in k$ such that $E_{t_0}/k$ is a solution to the Grunwald problem $(G,(L^{(\fp)}/k_\fp)_{\fp\in \mathcal{S}})$. 
\end{theorem}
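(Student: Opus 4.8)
The plan is to combine Theorem~\ref{thm:main} and Beckmann's theorem, which between them pin down the Galois-theoretic skeleton of each local completion, with a local Kummer-type analysis that produces the precise isomorphism type, and then to pass from these local conditions to a single global specialization point via Hilbert's irreducibility theorem together with weak approximation. Concretely, I would take $\mathcal{S}'_{\rm{exc}}$ to contain $\mathcal{S}_{\rm{exc}}(E/k(T))$ from Theorem~\ref{thm:main}, the primes dividing some $|I_{t_i}|$ or lying over residue characteristics where tameness fails, and the finitely many primes at which one of the branch-point specializations $E_{t_i}/k$ ramifies. For $\fp\in\mathcal{S}$, fix $t_i=t_{i(\fp)}$ and set $e=|I_{t_i}|$, and look for $t_0$ meeting $t_i$ modulo $\fp$ with exponent $a_\fp=v_\fp(t_0-t_i)$ coprime to $e$ (for definiteness one may even impose $a_\fp=1$). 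For any such $t_0$, Theorem~\ref{thm:main} gives that the decomposition group of $E_{t_0}/k$ at $\fp$ is conjugate to $\varphi_i^{-1}(D_{t_i,\fp})$, while Beckmann's theorem gives that its inertia group is conjugate to $I_{t_i}^{a_\fp}=I_{t_i}$, the last equality holding because $I_{t_i}$ is cyclic of order $e$ and $\gcd(a_\fp,e)=1$. Thus the Galois group and the inertia subgroup of the completion $E_{t_0,\fp}/k_\fp$ automatically match those prescribed for $L^{(\fp)}$, and it only remains to match the $k_\fp$-isomorphism type.

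The technical heart is to analyse $E_{t_0,\fp}$ as a tame extension. Its maximal unramified subextension has Galois group $D_{t_i,\fp}$, and, having excluded ramification of $E_{t_i}/k$ at $\fp$, this group is cyclic; hence the unramified part is \emph{forced}, being the unique unramified extension of $k_\fp$ of that degree, which is exactly the unramified part prescribed by the Galois group of $L^{(\fp)}$. The remaining data is a single totally ramified tame twist of degree $e$. Using the local model of the cover at the tame branch point $t_i$ (of Kummer type $y^e=x$ after normalization), one sees that this twist is governed by the class of $t_0-t_i$ in $k_\fp^{\times}/(k_\fp^{\times})^e$; as the unit part of $t_0-t_i$ ranges over $\mathcal{O}_\fp^{\times}$, the twist ranges over all admissible classes. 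The Twisting lemma is what makes this precise and compatible with the fixed Frobenius action on $I_{t_i}$: it lets me replace the condition $E_{t_0,\fp}\cong L^{(\fp)}$ by the condition that $t_0$ be an $\fp$-adic point on a suitable twisted component, and the local analysis then shows this condition cuts out a nonempty $\fp$-adic open set of admissible $t_0$. The main obstacle lies exactly here: realizing the \emph{prescribed} twist rather than merely some twist, keeping the choice compatible with $\gcd(a_\fp,e)=1$, and, when $\mu_e\not\subset k_\fp$, carrying out the argument after adjoining $\mu_e$ and descending, so that the Frobenius-twisted Kummer data is matched on the nose.

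Finally, each $\fp\in\mathcal{S}$ contributes a nonempty $\fp$-adic open condition on $t_0$, namely to meet $t_i$ modulo $\fp$ with the prescribed leading unit and exponent. I would intersect these finitely many independent local conditions using weak approximation, and simultaneously intersect with a Hilbert subset of $k$ guaranteeing that $E_{t_0}/k$ is a genuine $G$-extension; since Hilbert subsets remain nonempty after imposing finitely many $\fp$-adic open conditions, such a $t_0\in k$ exists. For this $t_0$, every completion $E_{t_0,\fp}$ is $k_\fp$-isomorphic to $L^{(\fp)}$, so $E_{t_0}/k$ solves the Grunwald problem $(G,(L^{(\fp)}/k_\fp)_{\fp\in\mathcal{S}})$, as desired.
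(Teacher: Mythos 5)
Your global architecture (independent local conditions at each $\fp$, then Krasner's lemma, weak approximation and Hilbert's irreducibility theorem to produce a single $t_0\in k$) matches the paper's, and your reading of the group-theoretic constraints via Theorem \ref{thm:main} and the Specialization Inertia Theorem is sound. But there is a genuine gap exactly at the point you yourself call ``the main obstacle'', and the tool you name to close it is the wrong one. The Twisting lemma governs unramified Frobenius conditions; the paper explicitly notes that the D\`ebes--Ghazi method does not, to the authors' knowledge, apply to ramified completions, so it cannot be ``what makes this precise'' here. What actually justifies the formula $(E_{t_0})_\fp = N^{(\fp)}\bigl(\sqrt[e]{\alpha(t_0-t_i)}\bigr)$ is the machinery of Steps I--III of the proof of Theorem \ref{thm main}: one works in the fraction field $F$ of $R_{\fp}[[T-t_i]]$, uses Eisenstein's theorem (Theorem \ref{thm:eisen}) via Lemma \ref{lemma 1 step 2} to see that, outside finitely many $\fp$, the extension $E\cdot F/F$ is still the Kummer extension $(E_{t_i}\cdot F)(\sqrt[e]{\alpha S})$ of the unramified part, and then reduces this extension at the place $T-t_0$ of $F$ to recover the completion of the specialization. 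Your proposal contains no substitute for this step.

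Second, even granting that formula, you must show that \emph{every} admissible $L^{(\fp)}$ --- every Galois extension of $k_\fp$ containing $N^{(\fp)}$ with group $\varphi_i^{-1}(D_{t_i,\fp})$ and inertia group $I_{t_i}$ --- is of the form $N^{(\fp)}(\sqrt[e]{\beta x})$ with $\beta\in k_\fp^\times$ and $v_\fp(\beta x)>0$, so that it is hit by some choice of $t_0-t_i$. Your remark about adjoining $\mu_e$ and descending gestures at this but does not carry it out (note that by Lemma \ref{branchcycle} the relevant roots of unity already lie in $N^{(\fp)}$; the issue is descent of the Kummer data from $N^{(\fp)}$ to $k_\fp$, not adjunction). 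The paper resolves it by recognizing these fields as the solutions of a Brauer embedding problem $\varphi^{-1}(D_{t_i,\fp})\to{\rm{Gal}}(N^{(\fp)}/k_\fp)$ and invoking the classification of such solutions (\cite[Chapter IV, Theorem 7.2]{MM99}): they form a single orbit $N^{(\fp)}(\sqrt[e]{\beta x})$, $\beta\in k_\fp^\times$, with $\beta$ normalizable by $e$-th powers so that $v_\fp(\beta x)>0$. Without this classification, your assertion that varying the unit part of $t_0-t_i$ ``ranges over all admissible classes'' is unsupported. (A minor point: insisting on $a_\fp=1$ is harmless for the field-isomorphism form of the Grunwald problem, but the paper's normalization, which allows arbitrary positive valuation, is what makes the surjectivity argument clean.)
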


\noindent
See Theorem \ref{thm fields} for a more general version of Theorem \ref{thm:fields}, where the assumption on the branch points is relaxed, and where $k$ is not necessarily a number field.

Given a single homomorphism $\varphi_i:D_{t_i}\ra D_{t_i}/I_{t_i}$ at a ($k$-rational) branch point $t_i$ of $E/k(T)$, Theorem \ref{thm:fields} provides a solution to all Grunwald problems $(G, (L^{(\fp)}/k_\fp)_{\fp\in \mathcal{S}})$, 
where, for each $\fp$ in $\mathcal{S}$, the local extension $L^{(\fp)}/k_\fp$ has inertia group $I_{t_i}$ and decomposition group $\varphi_i^{-1}(D_{t_i,\fp})$. 
Such results are especially applicable to problems where the primes are allowed to vary and the decomposition groups are fixed; see, in particular, \S\ref{sec:intro_admiss}. 

\subsubsection{About the proof}
Let $R$ be the ring of integers of $k$. Our approach to Theorems \ref{thm:main} and \ref{thm:fields} considers the fraction field $F$  of the two-dimensional domain  $R_\fp[[T-t_i]]$, which is also the completion of $R[T]$ at the ideal generated by $\fp$ and $T-t_i$. We use a theorem of Eisenstein, recalled as Theorem \ref{thm:eisen}, to show that, for all but finitely many primes $\fp$ of $k$,  the group $\Gal(E\cdot F/F)$ is determined by the local data $(\varphi_i,D_{t_i,\fp})$ at $t_i$. On the other hand, we show that, by reducing the extension $E\cdot F/F$ at a place extending the place $(T-t_0)$ of $k(T)$, the resulting extension of residue fields is the completion of $E_{t_0}/k$ at $\fp$, giving the desired connection between $D_{t_0,\fp}$ and the local data at $t_i$. See \S\ref{sec:main_proof}. 
The above method is considerably different from that of D\`ebes and Ghazi, which, to our knowledge, does not apply to ramified extensions; see \cite[\S1.6]{DG12}, and which is based on specializations of extensions of $k_\fp(T)$.

We use the theory of Brauer embedding problems to determine the extensions $M/k_\fp$ which satisfy the constraints imposed by Theorem \ref{thm:main} on completions of specializations $(E_{t_0})_\fp$, and the above reduction process to show that such fields $M$ are obtained as specializations $(E_{t_0})_\fp$. 


\subsection{Applications} \label{sec:applications} Theorem \ref{thm:main} is then used to study crossed product division algebras, the Hilbert--Grunwald property, and finite parametric sets. 

\subsubsection{Prescribing decomposition groups and crossed products} \label{sec:intro_admiss}
Recall that the Galois group $D$ of a tamely ramified extension of $k_\fp$ is {\textit{metacyclic}}, that is,  $D$ admits a cyclic normal subgroup $I$ such that $D/I$ is cyclic. 
The following application of Theorem \ref{thm:main} ensures the appearance of certain metacyclic subgroups of $G$ as decomposition groups of specializations of the $k$-regular $G$-extension $E/k(T)$.

\begin{theorem} \label{cor:reoc}
Given a branch point $t_i$ of $E/k(T)$, let $D_{t_i}$ (resp., $I_{t_i}$) denote the decomposition (resp., inertia) group at $t_i$, and let $\tau$ be an element of $D_{t_i}$. Then, there exist infinitely many primes $\fp$ of $k$, and, for each such prime $\fp$, infinitely many $t_0\in k$ such that $E_{t_0}/k$ is a $G$-extension with decomposition group $\langle I_{t_i},\tau\rangle$ at $\fp$. 
\end{theorem}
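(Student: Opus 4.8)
The plan is to combine Theorem \ref{thm:main} with the Chebotarev density theorem, the bridge between them being the elementary observation that
\[
\langle I_{t_i},\tau\rangle = \varphi_i^{-1}\bigl(\langle \varphi_i(\tau)\rangle\bigr).
\]
Indeed, since $I_{t_i}=\ker\varphi_i$ is contained in $\langle I_{t_i},\tau\rangle$, the preimage under $\varphi_i$ of the cyclic subgroup $\langle\varphi_i(\tau)\rangle\subseteq D_{t_i}/I_{t_i}$ generated by the image $\varphi_i(\tau)=\tau I_{t_i}$ is precisely the subgroup generated by $I_{t_i}$ and $\tau$. In view of Theorem \ref{thm:main}, it therefore suffices to exhibit infinitely many primes $\fp$ of $k$, outside $\mathcal{S}_{\rm{exc}}$, for which the decomposition group $D_{t_i,\fp}\subseteq \Gal(E_{t_i}/k)=D_{t_i}/I_{t_i}$ is conjugate to $\langle\varphi_i(\tau)\rangle$, together with, for each such $\fp$, infinitely many $t_0\in k$ meeting $t_i$ modulo $\fp$ with exponent coprime to $|I_{t_i}|$ and such that $E_{t_0}/k$ is a $G$-extension.

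To produce the primes, I would apply the Chebotarev density theorem to the finite Galois extension $E_{t_i}/k$ (of group $D_{t_i}/I_{t_i}$): there are infinitely many primes $\fp$ of $k$, unramified in $E_{t_i}/k$ and outside $\mathcal{S}_{\rm{exc}}$, whose Frobenius conjugacy class is that of $\varphi_i(\tau)$. For any such $\fp$, the decomposition group $D_{t_i,\fp}$ at a suitable prime of $E_{t_i}$ over $\fp$ is generated by the Frobenius and hence conjugate to $\langle\varphi_i(\tau)\rangle$, so that $\varphi_i^{-1}(D_{t_i,\fp})$ is conjugate to $\langle I_{t_i},\tau\rangle$.

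To produce the specialization points at a fixed such $\fp$, I would choose $t_0\in k$ with $v_\fp(t_0-t_i)=1$, so that $t_0$ meets $t_i$ modulo $\fp$ with exponent $a_\fp=1$, automatically coprime to $|I_{t_i}|$; after enlarging $\mathcal{S}_{\rm{exc}}$ if needed, such $t_0$ meets no other branch point modulo $\fp$. To secure simultaneously that $E_{t_0}/k$ is a $G$-extension, I would take $t_0$ in the intersection of this $\fp$-adic open condition with a Hilbert subset of $k$ on which the specialization has group $G$; since such intersections are infinite, and since Theorem \ref{thm:main} applies to each resulting pair $(\fp,t_0)$, the decomposition group of $E_{t_0}/k$ at $\fp$ is conjugate to $\varphi_i^{-1}(D_{t_i,\fp})=\langle I_{t_i},\tau\rangle$.

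The main obstacle is this last compatibility: arranging $t_0$ to lie at once in the prescribed $\fp$-adic neighbourhood of $t_i$ and in the Hilbert set defining $G$-extensions. This rests on a version of Hilbert's irreducibility theorem allowing finitely many $\fp$-adic conditions (the density of Hilbert subsets in the $\fp$-adic topology), and one must verify that imposing $v_\fp(t_0-t_i)=1$ is compatible with avoiding the remaining branch points and the exceptional set. Finally, for a branch point $t_i$ that is not $k$-rational, as permitted in the statement, I would replace Theorem \ref{thm:main} by its general form, Theorem \ref{thm main}, which controls the meeting and coprimality conditions at $t_i$ without a rationality hypothesis.
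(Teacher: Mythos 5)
Your proposal is correct and follows essentially the same route as the paper's proof of the general version (Theorem \ref{chebotarev}): reduce to the identity $\langle I_{t_i},\tau\rangle=\varphi_i^{-1}(\langle\varphi_i(\tau)\rangle)$, apply Chebotarev to the residue extension at $t_i$ to produce infinitely many primes whose Frobenius class is that of $\varphi_i(\tau)$ (the paper works over $k(t_i)$ throughout and additionally restricts to residue-degree-one primes, which is the point you defer to your last paragraph), and then combine Theorem \ref{thm main} with a $\fp$-adically refined Hilbert irreducibility statement (the paper's Lemma \ref{PV}) to produce the specialization points.
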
 

\noindent
We refer to Theorem \ref{chebotarev} for a generalization of Theorem \ref{cor:reoc}, where we consider finitely many primes of $k$ at the same time.
 
We then apply this result to prove the existence of {\textit{$G$-crossed product}} division algebras over number fields for various finite groups $G$. Recall that a finite dimensional division algebra over its center $k$ is a {\textit{$G$-crossed product}} if it admits a maximal subfield $L$ that is finite and Galois over $k$, and such that ${\rm{Gal}}(L/k) = G$. A $G$-crossed product division algebra is equipped with an explicit structure, which plays a key role in the theory of central simple algebras. By Schacher's result \cite[Theorem 4.1]{Sch68}, the existence of a $G$-crossed product division algebra with center $\mathbb Q$ implies that $G$ has metacyclic Sylow subgroups. The converse is a long standing open conjecture, originating in \cite{Sch68}: for every finite group $G$ with metacyclic Sylow subgroups, there exists a $G$-crossed product division algebra with center $\mathbb Q$. Although this conjecture has been extensively studied, cf.~\cite[Section 11.A]{ABGV11}, the only finite non-abelian simple groups for which the conjecture is known to hold are $A_5 \cong {\rm{PSL}}_2(\mathbb{F}_4) \cong {\rm{PSL}}_2(\mathbb{F}_5)$ \cite[Theorem 1]{GS79}, $A_6 \cong {\rm{PSL}}_2(\mathbb{F}_9)$ \cite[Theorem 6]{FS91}, $A_7$ \cite[Theorem 6]{FS91}, ${\rm{PSL}}_2(\mathbb{F}_7)$ \cite[Proposition 5]{AS01}, ${\rm{PSL}}_2(\mathbb{F}_{11})$ \cite[Theorem 2]{AS01}, and the Mathieu group M$_{11}$ \cite{KN17}. 

Given a $k$-regular $G$-extension $E/k(T)$ with suitable properties, we use Theorem \ref{cor:reoc} to establish a general criterion for the existence of specialization points $t_0\in k$ such that $E_{t_0}$ is a maximal subfield of a $G$-crossed product. We refer to Theorem \ref{admiss_crit} for our precise result. We then use this criterion to derive the first infinite family of finite non-abelian simple groups $G$ with a $G$-crossed product division algebra over $\Qq$.

\begin{theorem} \label{thm:admis}
Let $p$ be a prime number such that either $p \equiv 3 \pmod 8$ or $p\equiv 5 \pmod 8$. Then, there exists a ${\rm{PSL_2}}(\mathbb{F}_p)$-crossed product division algebra with center $\Qq$.
\end{theorem}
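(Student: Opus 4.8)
The plan is to apply the general criterion (Theorem \ref{admiss_crit}) to a suitable $\Qq$-regular ${\rm{PSL}}_2(\Ff_p)$-extension $E/\Qq(T)$, reducing the existence of a crossed product to a combination of a local realizability statement at each prime dividing $|{\rm{PSL}}_2(\Ff_p)|$ together with the conclusion of Theorem \ref{cor:reoc}. First I would fix a convenient $\Qq$-regular ${\rm{PSL}}_2(\Ff_p)$-extension $E/\Qq(T)$; the natural source is a rigid or otherwise well-understood family (for instance coming from a cover with three branch points) whose inertia and decomposition data at the branch points are explicitly computable. The congruence hypotheses $p\equiv 3$ or $5 \pmod 8$ should be exactly what is needed so that $2$ is a non-square modulo $p$ and the relevant branch-point inertia is tame and of the right order, which in turn controls the $2$-Sylow subgroup of $G={\rm{PSL}}_2(\Ff_p)$ (a dihedral group of order a power of $2$, hence metacyclic, consistent with Schacher's necessary condition).

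Next, recall that by Schacher's theory the existence of a $G$-crossed product division algebra with center $\Qq$ amounts to producing a $G$-extension $L/\Qq$ together with a choice of local invariants at finitely many primes so that the associated crossed product is a division algebra; concretely, it suffices to realize $L$ so that for each prime $q$ dividing $|G|$ there are (at least two, or enough) primes $\fp$ of $\Qq$ at which the decomposition group $D_\fp = \Gal(L_\fp/\Qq_\fp)$ contains a $q$-Sylow subgroup of $G$ as a \emph{metacyclic} subgroup admitting an appropriate cyclic quotient, so that the local invariants can be chosen to sum to zero while retaining full local degree at each $q$. This is precisely the kind of data that Theorem \ref{cor:reoc} delivers: for each prescribed metacyclic subgroup $\langle I_{t_i},\tau\rangle$ realizable as a decomposition group at a branch point, we obtain infinitely many primes $\fp$ and specialization points $t_0$ yielding a $G$-extension $E_{t_0}/\Qq$ with that decomposition group at $\fp$.

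The key steps, in order, are therefore: (i) identify, for each prime $q \mid |{\rm{PSL}}_2(\Ff_p)|$, a $q$-Sylow subgroup $P_q$ and exhibit it (or a metacyclic overgroup containing it) as a subgroup of the form $\langle I_{t_i},\tau\rangle$ for some branch point $t_i$ and some $\tau\in D_{t_i}$ of the chosen extension $E/\Qq(T)$; (ii) verify the hypotheses of the crossed-product criterion Theorem \ref{admiss_crit}, namely that these decomposition groups contain Sylow subgroups and that the corresponding local algebras can be assembled into a global division algebra via the Brauer-group sum-zero condition; and (iii) invoke Theorem \ref{cor:reoc} (or rather Theorem \ref{chebotarev}, handling all the relevant primes $q$ simultaneously) to produce a single specialization point $t_0\in\Qq$ whose specialization $E_{t_0}$ is a maximal subfield of the desired ${\rm{PSL}}_2(\Ff_p)$-crossed product.

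The main obstacle I expect is step (i): checking that for \emph{every} prime $q$ dividing $|{\rm{PSL}}_2(\Ff_p)| = \tfrac{1}{2}p(p-1)(p+1)$ the relevant Sylow subgroup arises as a decomposition group $\langle I_{t_i},\tau\rangle$ at a branch point. For the prime $p$ itself the $p$-Sylow is cyclic and should be easy to match with wild-or-tame inertia data, but for the primes $q$ dividing $p-1$ or $p+1$ the Sylow subgroups are cyclic or dihedral, and one must arrange the branch cycle description of $E/\Qq(T)$ so that inertia generators of the right orders appear and so that a Frobenius lift $\tau$ supplies the needed cyclic quotient; it is here that the congruence conditions on $p \bmod 8$, governing the structure of the $2$-Sylow, become essential, and verifying that the subgroup structure of ${\rm{PSL}}_2(\Ff_p)$ cooperates uniformly in $p$ is the delicate point. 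Once the group-theoretic realizability is secured, the passage from local decomposition data to a global division algebra follows formally from Theorem \ref{admiss_crit} and the Chebotarev-type output of Theorem \ref{chebotarev}.
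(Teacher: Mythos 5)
Your high-level strategy (Schacher's criterion via Theorem \ref{admiss_crit}, fed by the decomposition-group realization result Theorem \ref{cor:reoc}/\ref{chebotarev}) is the right one and matches the paper. But the proof has a genuine gap exactly at the point you yourself flag as ``the main obstacle'': you never verify hypothesis (H) of Theorem \ref{admiss_crit} for the prime $q=2$, which is the only prime that actually needs it (for $p\equiv 3,5 \pmod 8$ the $2$-Sylow of ${\rm{PSL}}_2(\Ff_p)$ is the Klein group of order $4$, while all odd Sylow subgroups are cyclic and are handled by Chebotarev on an ordinary Hilbertian specialization). Concretely, since the $2$-Sylow is non-cyclic and the inertia group $I_{t_i}$ at a branch point is cyclic, you must produce a branch point where $|I_{t_i}|$ is even \emph{and} the residue extension $(E(t_i))_{t_i}/k(t_i)$ has even degree, so that $\langle I_{t_i},\tau\rangle$ can contain a full Klein group. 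Your sketch gives no mechanism for controlling that residue degree.

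Moreover, the route you propose --- a three-branch-point ${\rm{PSL}}_2(\Ff_p)$-cover with explicitly computable inertia --- cannot supply this mechanism for these $p$: when $p\equiv 3,5\pmod 8$ the group ${\rm{PSL}}_2(\Ff_p)$ has no elements of order $4$, so every inertia group of $2$-power order has order $2$, and the only general tool for forcing a nontrivial residue degree, Lemma \ref{branchcycle} (the residue field at a branch point of ramification index $e$ contains the $e$-th roots of unity), is vacuous for $e=2$ over $\Qq$. The paper's proof circumvents this by starting from a rigid ${\rm{PGL}}_2(\Ff_p)$-cover $E/\Qq(T)$ with a $\Qq$-rational branch point of ramification index $4$ (class $4A$): Lemma \ref{branchcycle} then forces $\sqrt{-1}$ into the residue field, hence residue degree divisible by $2$, and one passes to the genus-zero quadratic subcover $E/\Qq(S)$ fixed by ${\rm{PSL}}_2(\Ff_p)$, where the corresponding branch point has inertia of order $2$ and the \emph{same} residue field, so its decomposition group has order divisible by $4$ and contains a $2$-Sylow with cyclic image modulo inertia. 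Without this detour through ${\rm{PGL}}_2(\Ff_p)$ (or some substitute argument pinning down the residue field at a branch point), step (i) of your outline does not go through, and the congruence condition on $p$ is not actually used anywhere in your argument.
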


\noindent
See Theorem \ref{thm:psl1} where we show more generally that Theorem \ref{thm:admis} holds over an arbitrary number field $k$ (instead of $\Qq$), provided primitive 4-th roots of unity are not in $k$. See also Theorem \ref{thm:psl2} where a similar conclusion is shown to hold for prime numbers lying in other arithmetic progressions.

\subsubsection{On the Hilbert-Grunwald property} \label{sec:intro_results_Hilbert-Grunwald}

Following a terminology due to D\`ebes and Ghazi, and motivated by the partial positive answer provided by Theorem \ref{thm:fields}, one may ask whether a given $k$-regular $G$-extension $E/k(T)$ has the {\textit{Hilbert-Grunwald property}}, that is, whether there exists a finite set $\mathcal{S}_{\rm{exc}}$ of primes of $k$ such that every Grunwald problem $(G, (L^{(\fp)}/k_\fp)_{\fp\in \mathcal{S}})$, with $\mathcal{S}$ disjoint from $\mathcal{S}_{\rm{exc}}$, has a solution inside the set of specializations of $E/k(T)$. Recall that this property is always fulfilled, provided we restrict to unramified Grunwald problems \cite{DG11, DG12}.

In contrast, our results on the local behaviour of (ramified) specializations show in particular that, for many finite groups $G$, no $k$-regular $G$-extension of $k(T)$ has the Hilbert-Grunwald property: 

\begin{theorem} \label{neg_grunwald_boohoo}
Assume that $G$ has a non-cyclic abelian subgroup. Then, no $k$-regular $G$-extension of $k(T)$ has the Hilbert--Grunwald property. 
\end{theorem}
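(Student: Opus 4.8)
The plan is to exhibit, for every finite set $\mathcal{S}_{\rm{exc}}$ of primes of $k$, a Grunwald problem whose set $\mathcal{S}$ is disjoint from $\mathcal{S}_{\rm{exc}}$ and which has no solution among the specializations of $E/k(T)$; this negates the Hilbert--Grunwald property for an arbitrary $k$-regular $G$-extension $E/k(T)$ at once. The guiding principle is that at primes $\fp$ splitting completely in all the residue extensions attached to the branch points, every specialization is forced to have a \emph{cyclic} decomposition group at $\fp$, while a non-cyclic abelian subgroup of $G$ furnishes a non-cyclic local extension to be prescribed. Concretely, first fix a prime number $\ell$ such that $G$ contains a copy of $V:=\Zz/\ell\times\Zz/\ell$; this is possible because $G$ has a non-cyclic abelian subgroup $A$, and any non-cyclic abelian group contains an elementary abelian group of rank $2$. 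Let $t_1,\dots,t_r$ be the branch points of $E/k(T)$ and let $M/k$ be the Galois closure of the compositum of $k(\zeta_\ell)$ with all the residue extensions $E_{t_j}$ at the $t_j$. By the Chebotarev density theorem there are infinitely many primes $\fp$ of $k$ splitting completely in $M/k$; discarding the finitely many primes lying in $\mathcal{S}_{\rm{exc}}$, in the exceptional set $\mathcal{S}_{\rm{exc}}(E/k(T))$ of Theorem \ref{thm main}, or dividing $\ell\,|G|$, I keep an infinite set $\Sigma$ of such primes.

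Next I would check that for every $\fp\in\Sigma$ and every $t_0\in k$, the decomposition group of $E_{t_0}/k$ at $\fp$ is cyclic. If $\fp$ is unramified in $E_{t_0}/k$ this is clear, the decomposition group being generated by a Frobenius. If $\fp$ ramifies, then, $\fp$ being outside $\mathcal{S}_{\rm{exc}}(E/k(T))$, the point $t_0$ meets a branch point $t_j$ modulo $\fp$, and Theorem \ref{thm main} identifies the decomposition group of $E_{t_0}/k$ at $\fp$ (up to conjugacy) with $\varphi_j^{-1}(D_{t_j,\fp})$, where $D_{t_j,\fp}$ is the decomposition group of $E_{t_j}/k$ at $\fp$. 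Since $\fp$ splits completely in $M$, hence in $E_{t_j}$, one has $D_{t_j,\fp}=1$, so this group equals $I_{t_j}$, which is cyclic because inertia at a branch point in characteristic zero is cyclic. Thus in all cases the decomposition group at $\fp$ is cyclic.

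Finally, since $\ell\mid q_\fp-1$ for $\fp\in\Sigma$ (as $\fp$ splits in $k(\zeta_\ell)$), where $q_\fp$ denotes the residue cardinality at $\fp$, the local field $k_\fp$ admits a tamely ramified abelian extension $L^{(\fp)}/k_\fp$ with Galois group $V$, obtained as the compositum of the unramified $\Zz/\ell$-extension and a totally (tamely) ramified $\Zz/\ell$-extension $k_\fp(\pi^{1/\ell})$; these are linearly disjoint, so the Galois group is indeed $V$, and $V$ embeds into $G$, making $(G,L^{(\fp)}/k_\fp)$ an admissible Grunwald datum. Choosing any $\fp\in\Sigma$ and setting $\mathcal{S}=\{\fp\}$, the Grunwald problem $(G,L^{(\fp)}/k_\fp)$ has no solution among specializations of $E/k(T)$: such a solution would have decomposition group at $\fp$ equal to the non-cyclic group $V$, contradicting the previous paragraph. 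As $\fp$ ranges over the infinite set $\Sigma$, this produces an unsolvable problem disjoint from any prescribed finite $\mathcal{S}_{\rm{exc}}$, so $E/k(T)$ does not have the Hilbert--Grunwald property.

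I expect the only delicate point to be the reduction, for ramified $\fp$, to the statement that the decomposition group is conjugate to $\varphi_j^{-1}(D_{t_j,\fp})$ uniformly in $t_0$ and without a coprimality hypothesis on the meeting exponent $a_\fp$; this is exactly the content of the general Theorem \ref{thm main}, and the crucial simplification is that splitting $\fp$ completely in the residue extensions annihilates the Frobenius contribution $D_{t_j,\fp}$, leaving only the cyclic inertia. The case of branch points that are not $k$-rational is absorbed by taking $M$ to be the Galois closure over $k$, so that the relevant Frobenii all become trivial for $\fp\in\Sigma$.
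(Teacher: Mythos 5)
Your proof is correct and follows essentially the same route as the paper's, which isolates the cyclicity statement as Proposition \ref{inf_many_cyclic} (primes totally split in the compositum of the residue extensions at the branch points force cyclic decomposition groups in all specializations) and then runs the same $\Zz/q\Zz\times\Zz/q\Zz$ construction at primes split in $F(\zeta_q)$. The one slight overstatement is your claim that, without the coprimality hypothesis on the meeting exponent, Theorem \ref{thm main} identifies $D_{t_0,\fp}$ with $\varphi_j^{-1}(D_{t_j,\fp'})$; in general it only gives that $D_{t_0,\fp}$ is conjugate to a subgroup $U\le D_{t_j}$ with $\varphi_j(U)=D_{t_j,\fp'}$, but since that image is trivial for your primes, $U\subseteq I_{t_j}$ is cyclic and your argument goes through unchanged.
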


\noindent
We refer to Theorem \ref{thm:neggrunwald} for a more general version with finitely many $k$-regular $G$-extensions at the same time. 
Groups with a non-cyclic abelian subgroup have been classified in the literature relatively explicitly; see, e.g., the classical papers \cite{Zas35} and \cite{Suz55}. Examples of such groups contain, in addition to the obvious  $S_n$ for $n\geq 4$ and $D_n$ with $n$ even,  all non-abelian simple groups; see the proof of Corollary \ref{simple}. In particular, these groups satisfy the conclusion of Theorem \ref{neg_grunwald_boohoo}. Recall that there are many non-abelian simple groups $G$ such that the only known realizations of $G$ over $k$ are specializations of $k$-regular $G$-extensions of $k(T)$, and only finitely many $k$-regular $G$-extensions of $k(T)$ are known\footnote{Of course, up to obvious manipulations such as changes of variable, or translates by rational extensions $k(s)/k(t)$, both of which do not yield new specializations.}. For such finite groups $G$, Theorem \ref{neg_grunwald_boohoo} implies that one cannot solve all Grunwald problems for the group $G$ over the number field $k$ by using only the currently known realizations of $G$ over $k$.

\subsubsection{Non-existence of finite parametric sets} \label{sec:intro_param}

As in \cite{KL18}, given a finite group $G$, we call a set $S$ of $k$-regular $G$-extensions of $k(T)$ {\textit{parametric}} if every $G$-extension of $k$ occurs as a specialization of some extension $E/k(T)$ in $S$; see Definition \ref{def para}. If $S$ consists of a single extension $E/k(T)$, the extension $E/k(T)$ is called  {\textit{parametric}}. This is a generalization of the {\textit{Beckmann-Black problem}} (over $k$), which, with our phrasing, asks whether every finite group $G$ has a parametric set over $k$ \footnote{See, e.g., the survey paper \cite{Deb01b} for more on the Beckmann-Black problem.}. Here, we ask whether a given finite group $G$ has a {\textit{finite}} parametric set over $k$.

On the one hand, a given finite group $G$ has a parametric extension over $k$, provided $G$ has a {\textit{one parameter generic polynomial}} over $k$. However, this latter condition is very restrictive. For example, in the case $k=\Qq$, only the subgroups of the symmetric group $S_3$ have a one parameter generic polynomial; see \cite[\S2.1 \& \S8.2]{JLY02}\footnote{ Note that no finite group $G$ with a finite parametric set over $k$, but with no one parameter generic polynomial over $k$, is available in the literature.}. 
On the other hand, no finite group $G$ was known to have no finite parametric set over the given number field $k$, until a recent joint work by the first two authors \cite{KL18}. However, the ``global" strategy developed in that paper requires $G$ to have a non-trivial proper normal subgroup which satisfies some further properties. In particular, this cannot be used for finite simple groups. 

As a further application of our ``local" results, we obtain the first examples of finite non-abelian simple groups without finite parametric sets: 

\begin{theorem} \label{parametric_boohoo}
Let $n \geq 4$ be an integer. Then, the alternating group $A_n$ has no finite parametric set over $k$.
\end{theorem}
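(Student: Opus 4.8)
The plan is to deduce Theorem \ref{parametric_boohoo} from the negative Hilbert--Grunwald result (Theorem \ref{neg_grunwald_boohoo}) together with the observation that $A_n$ contains a non-cyclic abelian subgroup for every $n \geq 4$. First I would record the group-theoretic input: for $n \geq 4$ the alternating group $A_n$ contains the Klein four-group $V = \langle (12)(34), (13)(24)\rangle$, which is abelian and non-cyclic. Hence $A_n$ satisfies the hypothesis of Theorem \ref{neg_grunwald_boohoo}, so \emph{no} $k$-regular $A_n$-extension of $k(T)$ has the Hilbert--Grunwald property.

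The bridge from ``no Hilbert--Grunwald property'' to ``no finite parametric set'' is the heart of the argument, and I expect this to be the main obstacle, since it requires combining finitely many extensions rather than a single one. Suppose, for contradiction, that $A_n$ had a finite parametric set $S = \{E_1/k(T), \dots, E_m/k(T)\}$ over $k$. The goal is to exhibit a Grunwald problem $(A_n, (L^{(\fp)}/k_\fp)_{\fp \in \mathcal{S}})$, with $\mathcal{S}$ disjoint from any prescribed finite exceptional set, that has no solution among the specializations of \emph{any} of the $E_j/k(T)$. Concretely, I would invoke the finitely-many-extensions version of Theorem \ref{neg_grunwald_boohoo} (namely Theorem \ref{thm:neggrunwald}, referenced in the excerpt), which is stated precisely to handle a finite family of $k$-regular $G$-extensions simultaneously. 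Applied to the family $S$, it produces, for a set $\mathcal{S}$ of primes avoiding the union of the exceptional sets attached to the $E_j$, local extensions $L^{(\fp)}/k_\fp$ such that no specialization of any $E_j/k(T)$ has completions matching $(L^{(\fp)}/k_\fp)_{\fp \in \mathcal{S}}$.

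It then remains to produce an actual $A_n$-extension $L/k$ realizing those prescribed completions, so that $L$ is a genuine $A_n$-extension of $k$ that cannot be a specialization of any member of $S$, contradicting parametricity. Here I would use that the prescribed local data can be chosen to be solvable and hence globally realizable by standard Grunwald theorems: since $V$ is abelian, the local constraints forced on the decomposition groups (governed by Theorem \ref{thm:main}) can be arranged inside an abelian, hence solvable, pattern, for which a global solution $L/k$ exists by the known solvable cases of the Grunwald problem cited in \S\ref{sec:intro_grunwald} (e.g. case (1) or (2)). The non-cyclic abelian subgroup $V$ is precisely what forces a local behaviour incompatible with the rigid decomposition-group constraints that specializations must obey near branch points.

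The main obstacle is verifying that the constraints identified by Theorem \ref{thm:main}, applied to \emph{each} $E_j/k(T)$ in the finite set $S$ simultaneously, can be violated by a single coherent choice of local data; this is a counting or pigeonhole argument showing that the finitely many possible ``decomposition-group profiles'' arising from branch points of the $E_j$ cannot exhaust all profiles compatible with the non-cyclic abelian subgroup $V$. Once that incompatibility is established at finitely many primes avoiding the exceptional sets, the existence of a global $A_n$-extension with the forbidden local behaviour (again via the solvable Grunwald results for the relevant abelian quotient data) completes the contradiction, and hence $A_n$ has no finite parametric set over $k$.
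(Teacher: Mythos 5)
Your first half matches the paper's strategy: the group-theoretic input is that $A_n$ contains $V=\Zz/2\Zz\times\Zz/2\Zz$ for $n\ge 4$, and the mechanism for ruling out specializations of a finite set $S=\{E_1/k(T),\dots,E_m/k(T)\}$ is Proposition \ref{inf_many_cyclic}: at any prime $\fp$ totally split in the compositum of the residue fields at all branch points of all the $E_j$ (outside a finite exceptional set), \emph{every} specialization of every $E_j$ has cyclic decomposition group at $\fp$. No pigeonhole over ``decomposition-group profiles'' is needed; the dichotomy is simply cyclic versus non-cyclic.

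The genuine gap is in your last step. To contradict parametricity you must exhibit an honest $A_n$-extension of $k$ whose completion at such a $\fp$ has the non-cyclic group $V$ --- this is condition ($**$) of Theorem \ref{grunwald_nonparam}, and it is exactly what separates ``no Hilbert--Grunwald property'' (Theorem \ref{thm:neggrunwald}) from ``no finite parametric set'': a Grunwald problem unsolvable by specializations of $S$ might simply be unsolvable globally, in which case no $A_n$-extension of $k$ is actually being missed. You propose to obtain this global extension ``by the known solvable cases of the Grunwald problem,'' but those results produce extensions with \emph{solvable} Galois group (e.g.\ a $V$-extension of $k$), not an $A_n$-extension; for $n\ge 5$ the group $A_n$ is simple non-abelian and none of the solvable Grunwald theorems applies to it. The missing ingredient is Mestre's theorem (\cite{Mes90}, \cite[Theorem 3]{KM01}): a $V$-extension $L/k$ with the prescribed completion at $\fp$ (which abelian Grunwald theory does supply) occurs as a specialization of \emph{some} $k$-regular $A_n$-extension $E/k(T)$, not necessarily in $S$, and then Lemma \ref{PV} (Krasner plus weak approximation) produces $t_0$ with $\Gal(E_{t_0}/k)=A_n$ and with $(E_{t_0})_\fp$ equal to the non-cyclic $L_\fp$. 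That $A_n$-extension is the one no member of $S$ can reach. Without this lifting step (or a substitute for it) your argument does not close for $n\ge 5$; only for $n=4$, where $A_4$ is itself an iterated semidirect product of abelian groups, could the route via solvable Grunwald results be made to work.
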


\noindent
We refer to Theorem \ref{grunwald_nonparam} where we show more generally that a given finite group $G$ has no finite parametric set over the number field $k$, provided $G$ has a non-cyclic abelian subgroup and some very weak Grunwald property holds. This last property holds in particular if $G=A_n$, by  Mestre \cite{Mes90}; see Corollary \ref{examples}.

\vspace{3mm}

{\textbf{Acknowledgments.}} This work is partially supported by the Israel Science Foundation (grant No. 577/15), and  the  Technion startup grant. 

\section{Notation and preliminaries} \label{sec:basics} 

The section is organized as follows. \S\ref{sec:basics_ded} is devoted to standard background on De-dekind domains, while we recall classical material on function field extensions in \S\ref{sec:basics_functionfields}.

\subsection{Basics on Dedekind domains} \label{sec:basics_ded}

For more on below, we refer to \cite{Ser79}.

\subsubsection{Residue, localization, and completion} \label{sec:basics_localization}

Let $R$ be a domain of characteristic zero, and $k$ its fraction field. Given a {\textit{prime}} $\fp$ of $k$, i.e., given a non-zero prime ideal $\fp$ of $R$, the {\textit{residue field}} $\overline{k}_\fp$ of $R$ at $\fp$ is the fraction field ${\rm{Frac}}(R/\fp)$ of $R/\fp$. {\textit{We shall assume that $\overline{k}_\fp$ is perfect.}} The local ring $R_{(\fp)}:=\{x/y \, : \, (x,y) \in R^2 \, \, {\rm{and}} \, \, y \notin \fp\}$ is the {\textit{localization}} of $R$ at $\fp$. The unique maximal ideal of $R_{(\fp)}$ is $\fp R_{(\fp)}$, and the corresponding residue field $R_{(\fp)} / \fp R_{(\fp)}$ is canonically isomorphic to $\overline{k}_\fp$. If $\fp R_{(\fp)}$ is principal, there is a discrete valuation $v_\fp$ on $k$ whose valuation ring is $R_{(\fp)}$. Let $k_\fp$ be the {\textit{completion}} of $k$ with respect to $v_\fp$. There is a unique valuation on $k_\fp$ extending $v_\fp$, the residue field of which coincides with $\oline k_\fp$ (up to canonical isomorphism). 

\subsubsection{Dedekind domains} \label{sec:basics_ramification}

From now on, we assume that $R$ is a Dedekind domain. 

Let $L/k$ be a finite extension. The integral closure $S$ of $R$ in $L$ is also a Dedekind domain. Let $\fP$ be a {prime} of $L$ lying over $\fp$. The residue field $\overline{L}_\mathfrak{P}$ ($=S/\fP$) is a finite extension of  $\overline{k}_\fp$. The {\textit{residue degree}} of $L/k$ at $\mathfrak{P}$ is the degree $f_{\fP|\fp}:=[\overline{L}_\fP:\overline{k}_\fp]$ of this finite extension. The maximal positive integer $e:=e_{\fP|\fp}$ for which $\fP^e$ is contained in $\fp S$ is the {\textit{ramification index}} of $\fP$ in $L/k$. The prime $\fp$ is {\textit{ramified}} in $L/k$ if there exists a prime $\fP$ lying over it with ramification index $e_{\fP|\fp}>1$, and {\textit{unramified}} otherwise. It is {\textit{totally ramified}} if $e_{\fP|\fp}=[L:k]$ for the unique prime $\fP$ lying over $\fp$, and {\textit{totally split}} in $L/k$ if $\fp$ is unramified in $L/k$, and if $f_{\fP|\fp}=1$ for every prime $\fP$ of $L$ lying over $\fp$. 

From now on, we assume that the extension $L/k$ is Galois with Galois group $G$. The subgroup of $G$ which consists of all elements $\sigma$ such that $\sigma(\mathfrak P)= \mathfrak P$ is the {\textit{decomposition group}} $D_{\mathfrak P}$ of $L/k$ at $\mathfrak P$. The residue extension $\overline{L}_\mathfrak{P}/\overline{k}_\fp$ is Galois, and the restriction map $D_{\mathfrak P} \rightarrow {\rm{Gal}}(\overline{L}_\mathfrak{P}/\overline{k}_\fp)$ is an epimorphism, whose kernel is the {\textit{inertia group}} $I_{\mathfrak P}$ of $L/k$ at ${\mathfrak P}$. The decomposition groups $D_{\mathfrak P_1}$ and $D_{\mathfrak P_2}$ (resp., the inertia groups $I_{\mathfrak P_1}$ and $I_{\mathfrak P_2}$) of two primes $\fP_1$ and $\fP_2$ lying over $\fp$ in $L/k$ are conjugate in $G$. When a prime $\fP$ lying over $\fp$ is fixed, we set $D_\fp:=D_{\fP}$ and $I_\fp:=I_{\fP}$. One has $I_\fp \trianglelefteq D_\fp$, the cardinalities $|I_\fp|$ and $|D_\fp|$ are independent of the choice of the prime $\fP$ lying over $\fp$, and are equal to $e_{\fP|\fp}$ and $e_{\fP|\fp} f_{\fP|\fp}$, respectively. Similarly, the residue extension $\overline{L}_{\fP}/\overline{k}_\fp$ of $L/k$ at $\fP$ does not depend on the choice of $\fP$ (up to $\overline{k}_\fp$-isomorphism); we therefore denote it by $\overline{L}_{\fp}/\overline{k}_\fp$.  
The following basic lemma describes explicitly the residue field $\overline{L}_\fp$.
\begin{lemma} \label{dedekind}
Let $f(X) \in R[X]$ be a monic separable polynomial of positive degree $n$ and splitting field $L$ over $k$. Denote the roots of $f(X)$ by $y_1,\dots,y_n$. Let $\fp$ be a prime of $k$ such that the discriminant of $f$ is not contained in $\fp$. Then, one has $\overline{L}_\fp= \overline{k}_\fp(\overline{y_1},\dots,\overline{y_n}),$ where $\overline{x}$ denotes the reduction of $x \in L$ modulo a prime of $L$ lying over $\fp$.
\end{lemma}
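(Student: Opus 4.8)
The plan is to prove the two inclusions separately, the containment $\overline{k}_\fp(\overline{y_1},\dots,\overline{y_n}) \subseteq \overline{L}_\fp$ being essentially formal and the reverse containment being the crux. First I would observe that, since $f$ is monic with coefficients in $R$, each root $y_i$ is integral over $R$ and lies in $L$, hence in the integral closure $S$ of $R$ in $L$; therefore every $\overline{y_i}$ makes sense as an element of $S/\fP = \overline{L}_\fp$, and the subfield they generate over $\overline{k}_\fp$ is contained in $\overline{L}_\fp$. At the same time, reducing the factorization $f = \prod_{i}(X-y_i)$ in $S[X]$ modulo $\fP$ shows that the reduction $\overline{f}$ of $f$ splits as $\prod_i (X - \overline{y_i})$ over $\overline{L}_\fp$. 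Since the discriminant of $f$ (a universal polynomial with integer coefficients in the coefficients of $f$) is not contained in $\fp$, its image $\mathrm{disc}(\overline f)$ is nonzero in $\overline{k}_\fp$, so $\overline f$ is separable and the reduced roots $\overline{y_1},\dots,\overline{y_n}$ are pairwise distinct. This last fact is the key input extracted from the discriminant hypothesis.

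For the reverse inclusion, set $E := \overline{k}_\fp(\overline{y_1},\dots,\overline{y_n})$ and recall from \S\ref{sec:basics_ramification} that $\overline{L}_\fp/\overline{k}_\fp$ is Galois and that the restriction map $D_\fp \to \Gal(\overline{L}_\fp/\overline{k}_\fp)$ is surjective. The plan is to show that $\Gal(\overline{L}_\fp/E)$ is trivial, which yields $E = \overline{L}_\fp$. Given $\overline\sigma$ in this group, I would lift it, using the surjectivity above, to some $\sigma \in D_\fp \subseteq G$. Since $\sigma$ stabilizes $\fP$, reduction modulo $\fP$ is $\sigma$-equivariant, so $\overline{\sigma(y_i)} = \overline\sigma(\overline{y_i}) = \overline{y_i}$ for every $i$, the last equality holding because $\overline\sigma$ fixes $E$ pointwise. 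On the other hand $\sigma \in \Gal(L/k)$ permutes the roots of $f$, say $\sigma(y_i) = y_{\pi(i)}$, so $\overline{y_{\pi(i)}} = \overline{y_i}$ for all $i$. By distinctness of the reduced roots this forces $\pi(i)=i$ for all $i$, hence $\sigma$ fixes each $y_i$; as $L = k(y_1,\dots,y_n)$, we conclude $\sigma = \mathrm{id}$ in $G$, and therefore $\overline\sigma = \mathrm{id}$. Thus $\Gal(\overline{L}_\fp/E) = 1$ and $E = \overline{L}_\fp$, as desired. (The identical argument applied to $I_\fp = \ker(D_\fp \to \Gal(\overline{L}_\fp/\overline{k}_\fp))$ shows that $\fp$ is in fact unramified in $L/k$, which is consistent with the picture.)

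The step I expect to carry all the weight is the passage from ``$\overline\sigma$ fixes the residue roots'' to ``$\sigma$ fixes the roots themselves,'' since it is precisely here that separability of $\overline f$ is indispensable: without the distinctness of $\overline{y_1},\dots,\overline{y_n}$ the permutation $\pi$ could be nontrivial while still fixing the reduced roots, and the conclusion would fail --- indeed this is exactly what happens at ramified primes. The only routine points to check are that the roots are genuinely integral and that the discriminant reduces correctly modulo $\fp$, both immediate from monicity of $f$, and that the reduction map $S \to S/\fP$ is compatible with the action of $D_\fp$, which is the standard reason the surjection $D_\fp \to \Gal(\overline{L}_\fp/\overline{k}_\fp)$ exists in the first place.
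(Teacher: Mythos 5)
Your proof is correct. The paper states this as a basic lemma without proof, and your argument is the standard one it implicitly relies on: reduce the factorization $f=\prod_i(X-y_i)$ modulo $\fP$, use the nonvanishing of the discriminant modulo $\fp$ to get pairwise distinct residue roots, and then show $\Gal(\overline{L}_\fp/\overline{k}_\fp(\overline{y_1},\dots,\overline{y_n}))$ is trivial by lifting through the surjection $D_\fp \to \Gal(\overline{L}_\fp/\overline{k}_\fp)$ and using equivariance of reduction --- every step (integrality of the roots, surjectivity of the restriction map, the forced triviality of the permutation $\pi$) is correctly justified.
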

The completion $L_\fP$ of $L$ at ${\mathfrak P}$ is Galois over $k_\fp$, with Galois group canonically isomorphic to $D_\fP$ (via restriction from $L_\fP$ to $L$). One has $L_\fP = L \cdot k_\fp$, where $L \cdot k_\fp$ denotes the compositum of $L$ and $k_\fp$ inside a given algebraic closure $\widetilde{k_\fp}$ of $k_\fp$. Since, up to $k_\fp$-isomorphism, this does not depend on the choice of $\fP$, one can speak of the {\textit{completion}} of $L/k$ at $\fp$, and denote it by $L_\fp / k_\fp$. The maximal subextension of ${L_\fp} / {k_\fp}$ which is unramified at $\fp$ equals $L_\fp^{I_\fp}/k_\fp$, where $L_\fp^{I_\fp}$ denotes the fixed field of $I_\fp$ in $L_\fp$; we denote it by $L_\fp^{\rm{ur}} / {k_\fp}$, and call it the {\textit{unramified part}} of ${L_\fp} / {k_\fp}$. Furthermore, fixing a $k$-embedding $\sigma$ of $L$ into $\widetilde{k_\fp}$ induces a choice of a prime $\fP$ lying over $\fp$ in $L/k$, via $\fP:=\{x\in L\, :\, v_\fp(\sigma(x)) >0\}$, where $v_\fp$ is the $\fp$-adic valuation on $k_\fp$, extended to $\widetilde{k_\fp}$.

\subsection{Extensions of function fields} \label{sec:basics_functionfields}

Let $\widetilde k$ denote a fixed algebraic closure of a given field $k$ of characteristic zero, and let $T$ be an indeterminate over $k$. 

\subsubsection{Classical background}

Let $E/k(T)$ be a finite Galois extension with Galois group $G$, and such that $E/k$ is {\textit{regular}}, that is, such that $E \cap \widetilde{k}=k$. We then say that $E/k(T)$ is a {\textit{$k$-regular $G$-extension.}} 

A point $t_0$ in $\mathbb{P}^1(\widetilde{k})$ is {\textit{a branch point}} of $E/k(T)$ if the prime ideal $(T-t_0) \, \widetilde{k}[T-t_0]$ of $\widetilde{k}[T-t_0]$ is ramified in  $E \cdot \widetilde{k}/\widetilde{k}(T)$ \footnote{Replace $T-t_0$ by $1/T$ if $t_0=\infty$.
The compositum $E \cdot \widetilde{k}$ of $E$ and $\widetilde{k}(T)$ is taken in a fixed algebraic closure of $k(T)$ containing $\widetilde{k}$.}. The extension $E/k(T)$ has finitely many branch points, denoted by $t_1,\dots,t_r$ (one has $r=0$ if and only if $G$ is trivial). For each $i \in \{1,\dots,r\}$, denote the decomposition group (resp., the inertia group) of $E(t_i)/k(t_i)(T)$ at (a fixed prime of $E(t_i)$ lying over) $(T-t_i) \, k(t_i)[T-t_i]$ by $D_{t_i}$ (resp., by $I_{t_i}$).

Given $t_0 \in \mathbb{P}^1(k)$, the residue extension of $E/k(T)$ at $(T-t_0) \, k[T-t_0]$ is denoted by ${E}_{t_0}/k$, and called the {\textit{specialization of $E/k(T)$ at $t_0$}}. It is a finite Galois extension whose Galois group is canonically isomorphic to $D_{t_0}/I_{t_0}$, where $D_{t_0}$ (resp., $I_{t_0}$) denotes the decomposition group (resp., the inertia group) of $E/k(T)$ at $(T-t_0) \, k[T-t_0]$.

If $k$ is the fraction field of a Dedekind domain $R$, we denote the decomposition group (resp., the inertia group) of the specialization $E_{t_0}/k$ at a given non-zero prime ideal $\fp$ of $R$ such that the residue field $\overline{k}_\fp$ is perfect by $D_{t_0,\fp}$ (resp., by $I_{t_0,\fp}$). Note that $D_{t_0,\fp}$ is canonically isomorphic to a subgroup of $D_{t_0}/I_{t_0}$.

We recall the following well-known lemma, cf.~\cite{PV05}, which is a standard consequence of Krasner's lemma and the compatibility between the Hilbert specialization property and the weak approximation property of $\mathbb{P}^1$:

\begin{lemma}\label{PV}
Assume $k$ is a number field. 
Let $\mathcal{S}$ be a finite set of primes of $k$, and $P(T,Y) \in k[T][Y]$ a monic, separable polynomial with splitting field $E$ over $k(T)$. 
For each $\fp \in \mathcal{S}$, choose $t_\fp$ in $k$ such that $P(t_\fp,Y)$ is separable, and let $(E_{t_\fp})_\fp/k_\fp$ be the completion of $E_{t_\fp}/k$ at $\fp$. 
Then, there exist infinitely many $t_0 \in k$ such that $E_{t_0}/k$ has Galois group $G$ and its completion at $\fp$ is $(E_{t_\fp})_\fp/k_\fp$ for each $\fp \in \mathcal{S}$. 
Moreover, one may require these specializations $E_{t_0}/k$ to be pairwise linearly disjoint.
\end{lemma}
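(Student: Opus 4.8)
The plan is to convert the prescribed local completions into $\fp$-adic proximity conditions on the specialization point via Krasner's lemma, and then to meet these conditions simultaneously with the Galois-group condition by using the compatibility of Hilbert's irreducibility theorem with weak approximation on $\Pp^1$. For the first step, fix $\fp \in \mathcal{S}$. Since $P(t_\fp,Y)$ is separable, the specialization $E_{t_\fp}/k$ is the splitting field of $P(t_\fp,Y)$ over $k$ (this is Lemma~\ref{dedekind} applied after specializing $T\mapsto t_\fp$), so the completion $(E_{t_\fp})_\fp/k_\fp$ is the splitting field of $P(t_\fp,Y)$ over $k_\fp$. Writing $y_1,\dots,y_n \in \widetilde{k_\fp}$ for the distinct roots of $P(t_\fp,Y)$, the continuity of roots in the coefficients together with Krasner's lemma provides an $\fp$-adic open neighbourhood $U_\fp \subseteq k_\fp$ of $t_\fp$ such that, for every $t_0 \in U_\fp$ at which $P(t_0,Y)$ is separable, each root of $P(t_0,Y)$ is $\fp$-adically close enough to some $y_j$ to generate the same extension of $k_\fp$. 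Hence $P(t_0,Y)$ and $P(t_\fp,Y)$ have the same splitting field over $k_\fp$, i.e.\ $(E_{t_0})_\fp/k_\fp \cong (E_{t_\fp})_\fp/k_\fp$.

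Next I would invoke Hilbert's irreducibility theorem to obtain a Hilbert subset $H \subseteq k$ with $\Gal(E_{t_0}/k)=G$ for all $t_0\in H$, and then use that, over a number field, a Hilbert subset is dense in $\prod_{\fp\in\mathcal{S}} k_\fp$; concretely, $H \cap \bigcap_{\fp\in\mathcal{S}} U_\fp$ is infinite (cf.~\cite{PV05}). Every $t_0$ in this infinite intersection then yields, by the first step, a $G$-extension $E_{t_0}/k$ whose completion at each $\fp\in\mathcal{S}$ is the prescribed $(E_{t_\fp})_\fp/k_\fp$. I expect this to be the main obstacle: the elementary ingredients (the identification of the completion with a splitting field and the Krasner estimate) are routine, whereas the density of Hilbert subsets in $\prod_{\fp\in\mathcal{S}} k_\fp$ — the assertion that the Hilbert condition can be satisfied together with finitely many independent $\fp$-adic conditions — is the substantive point and must be drawn from the literature.

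Finally, to arrange pairwise linear disjointness I would run an induction. Having produced $t_0^{(1)},\dots,t_0^{(m)}$ as above, with compositum $N:=E_{t_0^{(1)}}\cdots E_{t_0^{(m)}}$, I claim that the set of $t_0$ for which $E_{t_0}/k$ fails to be linearly disjoint from $N$ is thin. Indeed, a failure means $E_{t_0}\cap N \neq k$, so $(E^H)_{t_0}=E_{t_0}^H=E_{t_0}\cap N \subseteq N$ for some proper normal subgroup $H \trianglelefteq G$ (for $t_0\in H$, where the specialization isomorphism is compatible), and $E^H/k(T)$ is a $k$-regular $(G/H)$-extension. This containment forces the specialization of the nontrivial extension $E^H$ to degenerate inside one of the finitely many subfields of $N$, which is a thin condition on $t_0$. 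Since a Hilbert subset minus a thin set again contains a Hilbert subset and remains dense in $\prod_{\fp\in\mathcal{S}} k_\fp$, I can select $t_0^{(m+1)} \in H \cap \bigcap_{\fp\in\mathcal{S}} U_\fp$ avoiding this thin set, retaining all prescribed completions while being linearly disjoint from each previous specialization; iterating produces the desired infinite pairwise linearly disjoint family.
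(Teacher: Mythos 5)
Your proof is correct and follows exactly the route the paper itself indicates: the paper gives no proof of Lemma~\ref{PV} but cites it as ``a standard consequence of Krasner's lemma and the compatibility between the Hilbert specialization property and the weak approximation property of $\mathbb{P}^1$'' (cf.~\cite{PV05}), which is precisely the combination you spell out, together with the standard thin-set induction for pairwise linear disjointness. The only (cosmetic) issue is the clash of notation between your Hilbert subset $H$ and the normal subgroup $H\trianglelefteq G$ in the last paragraph.
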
 

\subsubsection{Laurent series fields and their algebraic extensions} \label{sec:basics_laurent}

Given $t_0 \in \mathbb{P}^1(k)$, set $\fp:=(T-t_0) \, k[T-t_0]$. The unramified part $E_\fp^{\rm{ur}}/k((T-t_0))$ of the completion $E_\fp/k((T-t_0))$ of $E/k(T)$ at $\fp$ is of the form $E_{t_0}((T-t_0)) / k((T-t_0))$; see \cite[page 55]{Ser79}. 
Furthermore, one has $E_\fp = E_\fp^{{\rm{ur}}}(\sqrt[e]{\pi})$ for some element $\pi \in E_\fp^{{\rm{ur}}}$ of $\fp$-adic valuation $1$. Since $\pi = \alpha(T-t_0)(1+\beta)$ for some $\alpha\in E_{t_0}$ and $\beta\in \fp E_\fp^{{\rm{ur}}}$, and since $1+\beta$ is an $e$-th power in $E_\fp^{{\rm{ur}}}$, we may replace $\pi$ by $\alpha(T-t_0)$, so that $E_\fp = E_\fp^{{\rm{ur}}}(\sqrt[e]{\alpha(T-t_0)})$. 
Moreover, since $E_\fp$ is the splitting field of $X^e-\alpha(T-t_0)$, the $e$-th roots of unity are contained in $E_\fp$, and then even in $E_\fp \cap \widetilde{k} = E_{t_0}$, giving the following lemma. 
\begin{lemma} \label{branchcycle}
Let $t_i$ be a branch point of $E/k(T)$ of ramification index $e_i$. Then, the specialization $E(t_i)_{t_i}$ 
contains all $e_i$-th roots of unity.
\end{lemma}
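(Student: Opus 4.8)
The plan is to reduce the statement to a purely local computation at $t_i$ and to read off the roots of unity from the structure of the (tame) ramification there. Since the conclusion concerns the specialization $E(t_i)_{t_i}$, I work throughout over the field $k(t_i)$, over which $t_i$ is rational. Thus I set $\fp:=(T-t_i)\,k(t_i)[T-t_i]$ and pass to the completion $F:=k(t_i)((T-t_i))$ of $k(t_i)(T)$ at $\fp$. First I would recall, as in \S\ref{sec:basics_laurent}, that the completion $E(t_i)_\fp/F$ is Galois with group $D_{t_i}$, that its maximal unramified subextension is $E(t_i)_{t_i}((T-t_i))/F$, and hence that its residue field is $E(t_i)_{t_i}$; the ramification index of $E(t_i)/k(t_i)(T)$ at $\fp$ is $e_i$.

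Next I would analyse the totally ramified part $E(t_i)_\fp/E(t_i)_\fp^{\rm{ur}}$, which is cyclic of degree $e_i$ and, being in residue characteristic $0$, tame. By the structure of tame extensions of complete discretely valued fields it is generated by an $e_i$-th root of a uniformizer, so $E(t_i)_\fp=E(t_i)_\fp^{\rm{ur}}(\sqrt[e_i]{\pi})$ with $v_\fp(\pi)=1$. I would then normalize $\pi$: writing $\pi=\alpha(T-t_i)(1+\beta)$, where $\alpha\in E(t_i)_{t_i}$ is a unit lifting the residue of $\pi/(T-t_i)$ and $\beta$ lies in the maximal ideal of $E(t_i)_\fp^{\rm{ur}}$, the factor $1+\beta$ is an $e_i$-th power in the complete field $E(t_i)_\fp^{\rm{ur}}$ by Hensel's lemma (the polynomial $X^{e_i}-(1+\beta)$ has the simple root $X=1$ modulo the maximal ideal, as $e_i\neq 0$ in residue characteristic $0$). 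Absorbing $1+\beta$ into the radical lets me take $\pi=\alpha(T-t_i)$.

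Finally I would extract the roots of unity. Since $E(t_i)_\fp/E(t_i)_\fp^{\rm{ur}}$ is Galois of degree $e_i$ and contains $\sqrt[e_i]{\alpha(T-t_i)}$, the minimal polynomial of this element is exactly $X^{e_i}-\alpha(T-t_i)$, so $E(t_i)_\fp$ is its splitting field and contains all its roots; the ratios of these roots are precisely the $e_i$-th roots of unity, whence $\mu_{e_i}\subseteq E(t_i)_\fp$. As each such root of unity is algebraic over $k(t_i)$, it lies in $E(t_i)_\fp\cap\widetilde{k}$, and I would identify this intersection with the residue field $E(t_i)_{t_i}$: the valuation $v_\fp$ is trivial on $\widetilde{k}$ (it is trivial on $k(t_i)$, over which $\widetilde{k}$ is algebraic), so every element of $E(t_i)_\fp\cap\widetilde{k}$ is a unit and reduces injectively into $E(t_i)_{t_i}$; together with the obvious inclusion $E(t_i)_{t_i}\subseteq E(t_i)_\fp\cap\widetilde{k}$ this yields equality. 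Therefore $\mu_{e_i}\subseteq E(t_i)_{t_i}$, as claimed.

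I expect the main obstacle to be the normalization of the uniformizer in the second step, namely arranging $\pi=\alpha(T-t_i)$ with $\alpha$ in the residue field $E(t_i)_{t_i}$, since this is precisely what exhibits $E(t_i)_\fp$ as the splitting field of a binomial and thereby forces the $e_i$-th roots of unity into it. The identification $E(t_i)_\fp\cap\widetilde{k}=E(t_i)_{t_i}$ is the other point requiring a little care; the remaining steps are routine once the local structure of $E(t_i)_\fp/F$ is in place.
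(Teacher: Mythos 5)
Your proof is correct and follows essentially the same route as the paper: the paper derives the lemma from the discussion in \S2.2.2, writing $E_\fp=E_\fp^{\rm ur}(\sqrt[e]{\alpha(T-t_0)})$ with $\alpha$ in the residue field and reading off the $e$-th roots of unity from the splitting field of $X^e-\alpha(T-t_0)$, exactly as you do over $k(t_i)$. You merely make explicit two points the paper leaves implicit (Hensel's lemma for absorbing $1+\beta$, and the identification $E(t_i)_\fp\cap\widetilde{k}=E(t_i)_{t_i}$), both of which you handle correctly.
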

Setting $k':=k(t_i)$, $E':=E(t_i)$, and $\fp_i:=(T-t_i) \, k'[T-t_i]$, 
the completion $E'_{\fp_i}$ is then a solution to the embedding problem 
$D_{t_i}\stackrel{\varphi}{\to} \Gal(E'^{\rm{ur}}_{\fp_i}/k'((T-t_i)))$, where the image is 
identified with $D_{t_i}/I_{t_i}$ and $\varphi$ is the natural projection. 
Since $E'_{\fp_i}$ is a Kummer extension of $E'_{t_i}((T-t_i))$,  the conjugation action of $D_{t_i}$ on $I_{t_i}$ is isomorphic to the action on the group of $e_i$-th roots of unity in $E'_{t_i}$. Such an embedding problem is then called a {\textit{Brauer embedding problem}}. See, e.g., \cite[Chapter IV, \S7]{MM99} for more details.

Finally, we will make use of a theorem about the structure of algebraic power series, which was first stated over the integers by Eisenstein. For our purposes, let $R$ be an integral domain with fraction field $k$. The general version below can be found, e.g., in \cite[Lemma 2.1]{BBC12}.

\begin{theorem} \label{thm:eisen}
Let $\alpha = \sum_{n=0}^\infty \alpha_n T^n \in k[[T]]$ be algebraic over $k(T)$. Then, there exist $r$ and $s$ in $R$ such that $r\cdot (\alpha_n s^n) \in R$ for each $n\ge 0$. In particular, if $R$ is a Dedekind domain, there exist only finitely many prime ideals $\fp$ of $R$ such that $\alpha$ is not in $R_{(\fp)}[[T]]$.
\end{theorem}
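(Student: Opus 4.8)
The plan is to deduce the final assertion formally from the first, and to prove the first via a Hensel-type recursion whose only divisions are by a fixed nonzero constant. For the final assertion, assume $R$ is a Dedekind domain and that $r,s\in R\setminus\{0\}$ satisfy $r\,(\alpha_n s^n)\in R$ for all $n$; for every prime $\fp$ of $R$ with $v_\fp(r)=v_\fp(s)=0$ one then has $v_\fp(\alpha_n)=v_\fp(r\,\alpha_n s^n)\ge 0$ for all $n$, so $\alpha\in R_{(\fp)}[[T]]$, and only finitely many primes contain the nonzero element $rs$. Thus everything reduces to producing $r,s\in R\setminus\{0\}$ with $r\,\alpha_n s^n\in R$ for all $n$, i.e. to bounding the denominators of the $\alpha_n$ geometrically in $n$, for an arbitrary domain $R$.

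First I would fix a defining relation. Since $\alpha$ is algebraic over $k(T)$, after clearing denominators I may take $F=\sum_{i,j}q_{ij}T^iY^j\in R[T][Y]$ to be a constant multiple of the minimal polynomial of $\alpha$ over $k(T)$; thus $F$ is separable in $Y$, and $F(T,\alpha)=0$. Comparing the coefficient of $T^M$ in this identity, the key observation is that the coefficient with which a given $\alpha_N$ enters \emph{linearly} in the equation arising from $[T^M]$ equals $e_{M-N}$, where $\partial_YF(T,\alpha_0)=\sum_i e_iT^i$ and $\alpha_0\in k$ is the fixed constant term of $\alpha$. Set $\mu:=\mathrm{ord}_T\,\partial_YF(T,\alpha_0)$ and suppose first that $\mu=0$, i.e. $e_0=\partial_YF(0,\alpha_0)\neq 0$ (equivalently, $\alpha_0$ is a simple root of $F(0,Y)$). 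Then the equation from $[T^N]$ reads $e_0\,\alpha_N+\Phi_N(\alpha_0,\dots,\alpha_{N-1})=0$, where $\Phi_N$ is a polynomial with coefficients in $R$, of degree at most $d:=\deg_YF$ in the previously determined coefficients. Hence $\alpha_N=-e_0^{-1}\Phi_N(\alpha_0,\dots,\alpha_{N-1})$, so that each step divides only by the \emph{fixed} element $e_0$.

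From this recursion the geometric bound follows by induction. Assuming for simplicity $\alpha_0\in R$ (otherwise first absorb its denominator into the constants below), I would prove the sharpened statement $e_0^{\,2n-1}\alpha_n\in R$ for all $n\ge 1$: indeed each monomial of $\Phi_N$ is an $R$-multiple of a product $\alpha_{n_1}\cdots\alpha_{n_j}$ with $j\le d$ and $n_1+\dots+n_j\le N$, in which every index is $<N$, and the inductive bounds $e_0^{\,2n_k-1}\alpha_{n_k}\in R$ make such a product lie in $e_0^{-(2N-2)}R$ (the crucial gain coming from the $-1$ in the exponents). Hence $e_0\,\alpha_N=-\Phi_N\in e_0^{-(2N-2)}R$, whence $e_0^{\,2N-1}\alpha_N\in R$ after the one further division by $e_0$. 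Taking $r=e_0$ and $s=e_0^{\,2}$ then yields $r\,\alpha_n s^n=e_0^{\,2n+1}\alpha_n\in R$ for all $n\ge 0$, which is the desired bound.

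The main obstacle is the degenerate case $\mu\ge 1$ (including $\mu=\infty$, when $\partial_YF(T,\alpha_0)\equiv 0$), which occurs exactly when the branch $Y=\alpha(T)$ passes through a ramification or singular point of the plane model $\{F=0\}$ over $T=0$, so that $\alpha_0$ is a multiple root of $F(0,Y)$ and the linear pivot for $\alpha_N$ is delayed or absent. To reduce to the smooth case above, I would perform a Newton--Puiseux normalization of the branch: the substitution $Y=\alpha_0+T\,W$ sends the colliding ramified branches to negative $T$-valuation, thereby separating them from the branch $W=(\alpha-\alpha_0)/T=\sum_{n\ge 0}\alpha_{n+1}T^n\in k[[T]]$, at the cost of the leading $Y$-coefficient of the transformed polynomial acquiring a zero at $T=0$; dividing out the resulting power of $T$ (legitimate since $\alpha\in k[[T]]$, and harmless for geometric bounds, as it merely shifts indices) returns a relation at whose center $W=\alpha_1$ the multiplicity has dropped. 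Since $\alpha$ is a genuine power series, finitely many such blow-ups make the center a simple root, reducing the general case to the Henselian-smooth recursion; reassembling $\alpha$ from the finitely many normalized branches preserves the geometric growth of the denominators. Keeping the pivot a fixed constant throughout this separation is exactly the delicate heart of Eisenstein's theorem, and is what rules out the spurious factorial-type denominators that a naive differentiation of $F(T,\alpha)=0$ in $T$ would introduce.
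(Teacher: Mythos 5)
The paper does not actually prove Theorem \ref{thm:eisen}: it imports the statement from the literature (citing \cite[Lemma 2.1]{BBC12}), so there is no internal argument to measure yours against, and your proposal should be judged on its own. What you give is the classical proof of Eisenstein's theorem, and its core is sound. The reduction of the Dedekind-domain assertion to the existence of $r,s$ is correct (only the finitely many primes containing $rs$ can cause trouble). In the nondegenerate case $e_0=\partial_YF(0,\alpha_0)\neq 0$, the identification of the linear pivot, the recursion $e_0\,\alpha_N=-\Phi_N(\alpha_0,\dots,\alpha_{N-1})$, and the induction $e_0^{\,2n-1}\alpha_n\in R$ (with the exponent bookkeeping $\sum_k(2n_k-1)\le 2N-2$) are all correct and yield $r=e_0$, $s=e_0^{\,2}$; absorbing a denominator of $\alpha_0$ into the pivot also works. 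The only place you are imprecise is the degenerate case. First, a single blow-up $Y=\alpha_0+TW$ need \emph{not} lower the multiplicity of the center, since two branches of $\{F=0\}$ through $(0,\alpha_0)$ may agree to high order; the multiplicity is merely non-increasing. Second, the reason the process terminates is not that ``$\alpha$ is a genuine power series'' but that $F$ is separable in $Y$: its roots in the Puiseux field are pairwise distinct, so there is an $M$ with $\mathrm{ord}_T(\eta-\alpha)\le M$ for every root $\eta\neq\alpha$, and after at most $M+1$ blow-ups $\alpha$ is the only branch remaining over the center, which is then a simple root. With that justification supplied (and noting, as you do, that the finitely many substitutions and divisions by powers of $T$ only multiply $r$ and $s$ by fixed constants and shift indices), the argument is complete. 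A standard shortcut that avoids iterating: set $m:=\mathrm{ord}_T\,\partial_YF(T,\alpha)$, which is finite by separability, truncate $\alpha$ past degree $2m$, and apply the Taylor expansion to the tail; the transformed equation is already in the nondegenerate case, so one application of your recursion suffices.
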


\section{Ramification in specializations} \label{sec:basics_inertia}

In this section, we recall some classical properties on ramification in specializations of function field extensions.

Let $k$ be the fraction field of a Dedekind domain $R$ of characteristic zero, and let $\fp$ be a non-zero prime ideal of $R$ such that $\overline{k}_\fp$ is perfect. Let $v_\fp$ denote the discrete valuation on $k$ with valuation ring $R_{(\fp)}$. Let $T$ be an indeterminate over $k$ and $G$ a finite group.

First, we recall the definition of {\textit{meeting modulo $\fp$}}:

\begin{definition} \label{meeting} 
(1) Let $F/k$ be a finite extension, $R_F$ the integral closure of $R$ in $F$, and $\fp_F$ a non-zero prime ideal of $R_F$. We say that two distinct points $t_0$ and $t_1$ in $\mathbb{P}^1(F)$ {\textit{meet modulo $\fp_F$}} if either $v_{\fp_F}(t_0) \geq 0$, $v_{\fp_F}(t_1) \geq 0$, and $v_{\fp_F}(t_0-t_1) > 0$, or $v_{\fp_F}(t_0) \leq 0$, $v_{\fp_F}(t_1) \leq 0$, and $v_{\fp_F}((1/t_0) - (1/t_1)) > 0$ (where $v_{\fp_F}$ denotes the discrete valuation on $F$ associated with $\fp_F$)\footnote{We set $1/\infty = 0$, $1 / 0 = \infty$, $v_{\fp}(\infty) = -\infty$, and $v_{\fp}(0) = \infty$.}.

\noindent (2) We say that two distinct points $t_0$ and $t_1$ in $\mathbb{P}^1(\widetilde{k})$ {\textit{meet modulo $\fp$}} if $t_0$ and $t_1$ meet modulo a prime ideal of the integral closure of $R$ in $k(t_0,t_1)$ lying over $\fp$.
\end{definition}

Note that, if $t_0$ and $t_1$ meet modulo $\fp$ and $F/k$ is a finite extension containing $k(t_0,t_1)$, then, $t_0$ and $t_1$ meet modulo a prime ideal of the integral closure of $R$ in $F$ lying over $\fp$.

In the case where $t_0$ is $k$-rational and meets $t_1$ modulo $\fp$, the following lemma asserts the existence of a unique degree $1$ prime lying over $\fp$ at which $t_0$ and $t_1$ meet. 
\begin{lemma} \label{lemma meeting}
For every $t_1\in \mathbb{P}^1(\widetilde{k})$, there exists a finite set $\mathcal S_1$ of primes of $k$, depending only on $t_1$, which satisfies the following property. Suppose $\fp\not\in\mathcal S_1$ and let $t_0\in \mathbb P^1(k)\setminus \{t_1\}$ be such that $t_0$ and $t_1$ meet modulo $\fp$. Then, there exists a unique prime  $\fp':=\fp'(t_0,t_1,\fp)$ lying over $\fp$ in $k(t_1)/k$ with residue degree $f_{\fp'\vert\fp}=1$ at which $t_0$ and $t_1$ meet. 
\end{lemma}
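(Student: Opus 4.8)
The plan is to reduce the meeting condition to a statement about the factorization modulo $\fp$ of the minimal polynomial of $t_1$, and then to read off both the uniqueness and the residue degree from the separability of the reduced polynomial. If $t_1=\infty$ then $k(t_1)=k$, so $\fp$ is the only prime over itself and the claim is immediate; I may therefore assume $t_1\neq\infty$. Let $m(X)\in k[X]$ be the monic minimal polynomial of $t_1$ over $k$, of degree $d=[k(t_1):k]$. Since $m$ has finitely many coefficients, each with finitely many primes in its denominator, and since $\operatorname{disc}(m)\neq 0$ (as $m$ is separable), I would define $\mathcal S_1$ to be the finite set of primes $\fp$ of $k$ for which either $m\notin R_{(\fp)}[X]$ or $\operatorname{disc}(m)\notin R_{(\fp)}^\times$. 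This set depends only on $t_1$.

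Next, fix $\fp\notin\mathcal S_1$. Then $m$ is monic in $R_{(\fp)}[X]$ with unit discriminant, so $R_{(\fp)}[t_1]=R_{(\fp)}[X]/(m)$ is \'etale over $R_{(\fp)}$ and equals the localization at $\fp$ of the integral closure $R_1$ of $R$ in $k(t_1)$. Writing the separable reduction as $\overline m=\prod_j \overline{g_j}$, a product of distinct monic irreducible factors over $\overline{k}_\fp$, the Chinese Remainder Theorem shows that the primes $\fp_1$ of $k(t_1)$ lying over $\fp$ correspond bijectively to the factors $\overline{g_j}$: the residue field at the prime $\fp_1^{(j)}$ attached to $\overline{g_j}$ is $\overline{k(t_1)}_{\fp_1^{(j)}}=\overline{k}_\fp[X]/(\overline{g_j})$, so $f_{\fp_1^{(j)}\vert\fp}=\deg\overline{g_j}$, and the reduction $\overline{t_1}$ of $t_1$ at $\fp_1^{(j)}$ is the class of $X$, i.e.\ a root of $\overline{g_j}$. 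In particular $v_{\fp_1}(t_1)\geq 0$ for every prime $\fp_1\mid\fp$, so $\overline{t_1}\neq\infty$.

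I would then rephrase the meeting condition: unwinding Definition \ref{meeting}, for a prime $\fp_1\mid\fp$ the points $t_0$ and $t_1$ meet modulo $\fp_1$ precisely when they reduce to the same point of $\mathbb P^1(\overline{k(t_1)}_{\fp_1})$. Since $\overline{t_1}\neq\infty$, this forces $t_0$ to be $\fp$-integral with $\overline{t_0}=\overline{t_1}$; as $t_0\in k$, its reduction $\overline{t_0}$ lies in $\overline{k}_\fp$. Hence $\overline{g_j}$ has the root $\overline{t_0}\in\overline{k}_\fp$, and irreducibility forces $\overline{g_j}=X-\overline{t_0}$, so $f_{\fp_1^{(j)}\vert\fp}=1$; thus every prime over $\fp$ at which $t_0$ and $t_1$ meet has residue degree $1$. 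Conversely, the separability of $\overline m$ means at most one factor equals $X-\overline{t_0}$, so at most one prime over $\fp$ meets, while the hypothesis that $t_0$ and $t_1$ meet modulo $\fp$ guarantees at least one. This yields exactly one meeting prime $\fp'$, which has residue degree $1$, as claimed.

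The main obstacle will be the bookkeeping around Definition \ref{meeting}: one must verify carefully that, for $\fp\notin\mathcal S_1$, ``meeting modulo $\fp_1$'' is equivalent to equality of reductions in $\mathbb P^1$ over the residue field, handling both branches of the definition (the $v\geq 0$ and $v\leq 0$ cases) as well as the fact that $t_0$ is a priori not assumed $\fp$-integral. The key simplification is that the $\fp$-integrality of $t_1$, guaranteed by the choice of $\mathcal S_1$, rules out the point at infinity and collapses both branches to the single condition $\overline{t_0}=\overline{t_1}$, after which the argument reduces to the elementary fact that a separable polynomial over $\overline{k}_\fp$ has at most one linear factor $X-\overline{t_0}$.
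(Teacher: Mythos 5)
Your proof is correct and follows essentially the same route as the paper: both exclude the finitely many primes where the minimal polynomial of $t_1$ fails to be integral or separable mod $\fp$, then use the Dedekind--Kummer correspondence (Lemma \ref{dedekind}) between primes of $k(t_1)$ over $\fp$ and irreducible factors of $\overline{m_{t_1}}$ to see that any meeting prime corresponds to the linear factor $X-\overline{t_0}$, which gives residue degree $1$ and, by separability, uniqueness. Your explicit handling of $t_1=\infty$ and of the two branches of Definition \ref{meeting} is slightly more careful bookkeeping than the paper's, but the substance is identical.
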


\begin{remark} \label{remark meeting} \
(1) If $t_1$ is $k$-rational, then, one has $\fp'(t_0,t_1,\fp)=\fp$.

\noindent (2) For each prime ideal $\fp'$ lying over $\fp \not \in \mathcal{S}_{1}$ in $k(t_1)/k$ with residue degree $f_{\fp' | \fp}=1$, the weak approximation property of $\mathbb{P}^1$ provides infinitely many $t_0 \in k$ such that $\fp'(t_0,t_1,\fp)=\fp'$.
\end{remark}

\begin{proof}
By part (1) of Remark \ref{remark meeting}, we may assume $t_1 \ne \infty$. We require the set $\mathcal S_1$ to contain the (finite) set of primes $\mathfrak{q}$ of $k$ such that the minimal polynomial $m_{t_1}(T)$ of $t_1$ over $k$ is either not integral at $\mathfrak{q}$ or not separable modulo $\mathfrak{q}$. In particular, for every prime $\fp'$ of $k(t_1)$ lying over $\fp$, we may assume $v_{\fp'}(t_1)\geq 0$ and $\oline{k(t_1)}_{\fp'}$ is generated over $\oline k_\fp$ by the reduction modulo $\fp'$ of $t_1$, by Lemma \ref{dedekind}.

For the existence part, note that, since $t_0$ and $t_1$ meet modulo $\fp$, the above provides a prime ideal $\fp'$ lying over $\fp$ in $k(t_1)/k$ such that $v_{\fp'}(t_0) \geq 0$, $v_{\fp'}(t_1) \geq 0$, and $v_{\fp'}(t_0-t_1) > 0$. Since $\fp$ is not in $\mS_1$, and since the reduction modulo $\fp'$ of $t_1$ equals the reduction $\overline{t_0}$ modulo $\fp'$ of $t_0$, we have $\oline{k(t_1)}_{\fp'} = \oline k_\fp(\oline{t_0}) = \oline k_\fp$, that is, $f_{\fp' | \fp}=1$.

For the uniqueness part, assume that $t_0$ and $t_1$ meet modulo two distinct primes $\fp'_1$ and $\fp'_2$ of $k(t_1)$ lying over $\fp$, both with residue degree $1$. Since $m_{t_1}(T)$ is separable modulo $\fp$, the primes $\fp'_1$ and $\fp'_2$  correspond to distinct linear factors $T-a_1$ and $T-a_2$ of the reduction of $m_{t_1}(T)$ in $\oline k_\fp[T]$, so that $t_1\equiv a_j$ modulo $\fp'_j$ for each $j \in \{1,2\}$. As $t_0$ and $t_1$ meet modulo $\fp'_j$, the difference $t_0-t_1$ is in $\fp'_j$. Hence, $t_0- a_j$ is in $\fp'_j \cap k= \fp$. We then get $a_1\equiv a_2$ modulo $\fp$, which cannot happen.
\end{proof}

Now, we recall the definition of {\textit{intersection multiplicity at $\fp$}}.
Below, the minimal polynomial over $k$ of any given element $t_1 \in \mathbb{P}^1(\widetilde{k})$ is denoted by $m_{t_1}(T)$ (we set $m_{t_1}(T)=1$ if $t_1 = \infty$). Denote the constant coefficient of $m_{t_1}(T)$ by $a_{t_1}$. Then, the minimal polynomial of $1/t_1$ over $k$ is 
\begin{itemize}
\item $m_{1/t_1}(T)=(1/a_{t_1})\, T^{{\rm{deg}}(m_{t_1}(T))} \, m_{t_1}(1/T)$ if $t_1 \in \widetilde{k} \setminus \{0\}$,
\item $m_{1/t_1}(T)=1$ if $t_1=0$,
\item $m_{1/t_1}(T)=T$ if $t_1 = \infty$.
\end{itemize}

Let $t_1$ be in $\mathbb{P}^1(\widetilde{k})$ and $t_0$ in $\mathbb{P}^1(k)$. Assume $v_{\fp}(a_{t_1})=0$ if $t_1 \not=0$ to make the intersection multiplicity well-defined in Definition \ref{**} below.

\begin{definition} \label{**}
{\textit{The intersection multiplicity $I_{\fp}(t_0,t_1)$ of $t_0$ and $t_1$ at $\fp$}} is 
$$I_{\fp}(t_0,t_1)= \left \{ \begin{array} {ccc}
          v_{\fp}(m_{t_1}(t_0)) & {\rm{if}} & v_{\fp}(t_0) \geq 0, \\
          v_{\fp}(m_{1/t_1}(1/t_0)) & {\rm{if}} &  v_{\fp}(t_0) \leq 0. \\   
          \end{array} \right.$$
\end{definition}

Lemma \ref{equiv} below, which is \cite[Lemma 2.5]{Leg16c}, connects  Definitions \ref{meeting} and \ref{**}.

\begin{lemma} \label{equiv} 
Let $t_1$ be in $\mathbb{P}^1(\widetilde{k})$ and $t_0$ in $\mathbb{P}^1(k)$. Assume $v_{\fp}(a_{t_1})=0$ if $t_1 \not=0$.
\begin{itemize}
\item[(1)] If $I_{\fp}(t_0,t_1) >0$, then, $t_0$ and $t_1$ meet modulo $\fp$.
\item[(2)] The converse in {\rm{(1)}} holds, provided $m_{t_1}(T)$ is in $R_{(\fp)}[T]$.
\end{itemize}
\end{lemma}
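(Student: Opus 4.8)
The plan is to reduce both statements to a single identity relating the intersection multiplicity to the valuations of $t_0-t_1$ at the primes of $k(t_1)$ lying over $\fp$. Since $t_0\in\mathbb{P}^1(k)$ we have $k(t_0,t_1)=k(t_1)$, so by Definition \ref{meeting}(2) the points $t_0$ and $t_1$ meet modulo $\fp$ precisely when they meet modulo some prime $\fq$ of $k(t_1)$ over $\fp$ in the sense of Definition \ref{meeting}(1). I would first dispose of the case $v_\fp(t_0)\le 0$ by the symmetry $t\mapsto 1/t$: replacing $(t_0,t_1)$ by $(1/t_0,1/t_1)$ turns the second clause of Definition \ref{**} and the second alternative of Definition \ref{meeting}(1) into the first ones, and $m_{1/t_1}$ is again monic with coefficients in $R_{(\fp)}[T]$ whenever $m_{t_1}$ is, because $v_\fp(a_{t_1})=0$ makes $1/a_{t_1}$ a unit of $R_{(\fp)}$ in the formula $m_{1/t_1}(T)=(1/a_{t_1})\,T^{\deg m_{t_1}}m_{t_1}(1/T)$. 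After this reduction it suffices to treat $v_\fp(t_0)\ge 0$, where $I_\fp(t_0,t_1)=v_\fp(m_{t_1}(t_0))$; the degenerate values $m_{t_1}(T)=T$ for $t_1=0$ and $m_{t_1}(T)=1$ for $t_1=\infty$ are checked directly.

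The core observation is that $m_{t_1}(t_0)=\prod_\sigma(t_0-\sigma(t_1))=N_{k(t_1)/k}(t_0-t_1)$, where $\sigma$ runs over the $k$-embeddings of $k(t_1)$ into $\widetilde{k}$. Combining this with the standard norm formula $v_\fp(N_{k(t_1)/k}(x))=\sum_{\fq|\fp}f_{\fq|\fp}\,v_{\fq}(x)$ (cf. \cite{Ser79}), where $\fq$ ranges over the primes of $k(t_1)$ over $\fp$, gives
$$I_\fp(t_0,t_1)=\sum_{\fq|\fp}f_{\fq|\fp}\,v_{\fq}(t_0-t_1).$$
Both parts of the lemma then read off from this single expression.

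For part (1), if $I_\fp(t_0,t_1)>0$ then not all summands can be $\le 0$, so $v_{\fq_0}(t_0-t_1)>0$ for some prime $\fq_0|\fp$. Since $v_\fp(t_0)\ge 0$ forces $v_{\fq_0}(t_0)\ge 0$, the ultrametric inequality gives $v_{\fq_0}(t_1)=v_{\fq_0}\bigl(t_0-(t_0-t_1)\bigr)\ge 0$ as well; hence the three conditions $v_{\fq_0}(t_0)\ge 0$, $v_{\fq_0}(t_1)\ge 0$, $v_{\fq_0}(t_0-t_1)>0$ hold, which is exactly meeting modulo $\fq_0$, and therefore modulo $\fp$. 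Note that no integrality hypothesis on $m_{t_1}$ is used here.

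For part (2), meeting modulo $\fp$ provides a prime $\fq_0|\fp$ at which the first alternative of Definition \ref{meeting}(1) holds, so the summand $f_{\fq_0|\fp}v_{\fq_0}(t_0-t_1)$ is strictly positive. The hypothesis $m_{t_1}\in R_{(\fp)}[T]$ (monic) makes $t_1$ integral over $R_{(\fp)}$, whence $v_{\fq}(t_1)\ge 0$, and thus $v_{\fq}(t_0-t_1)\ge 0$, for every $\fq|\fp$; so all summands are nonnegative and the total sum is positive, i.e. $I_\fp(t_0,t_1)>0$. I expect the only real subtlety to be bookkeeping: matching the valuation-theoretic statement ``some summand is positive'' with the geometric statement ``$t_0$ and $t_1$ meet at some prime of $k(t_1)$'', and checking in part (1) that integrality of $t_1$ at the relevant prime is forced by $v_\fp(t_0)\ge 0$ alone, since the integrality hypothesis is unavailable there. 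The $t\mapsto 1/t$ reduction and the degenerate points $0,\infty$ require a short but routine separate verification.
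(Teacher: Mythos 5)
The paper itself gives no proof of this lemma: it is imported verbatim as \cite[Lemma 2.5]{Leg16c}, so there is no in-paper argument to compare yours against, and I can only assess your proof on its own terms. It is correct. The identity $I_\fp(t_0,t_1)=\sum_{\fq\mid\fp}f_{\fq\mid\fp}\,v_\fq(t_0-t_1)$ (for $v_\fp(t_0)\ge 0$ and $t_1\notin\{0,\infty\}$, obtained from $m_{t_1}(t_0)=N_{k(t_1)/k}(t_0-t_1)$ and the valuation-of-norm formula) does capture both directions at once, and the hypothesis $m_{t_1}\in R_{(\fp)}[T]$ enters exactly where you use it, namely to make $t_1$ integral over $R_{(\fp)}$ and hence all summands nonnegative in part (2). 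Your handling of the $t\mapsto 1/t$ reduction is also right: $a_{1/t_1}=1/a_{t_1}$ is a unit of $R_{(\fp)}$, so both the normalization hypothesis and the integrality hypothesis transfer. One small point you assert without justification: in part (2), the meeting at $\fq_0$ could a priori be witnessed only by the \emph{second} alternative of Definition \ref{meeting}(1); but since in your situation $v_{\fq_0}(t_0)\ge 0$ and $v_{\fq_0}(t_1)\ge 0$, that alternative forces $v_{\fq_0}(t_0)=v_{\fq_0}(t_1)=0$, whence $v_{\fq_0}(t_0-t_1)=v_{\fq_0}(1/t_0-1/t_1)>0$ and the first alternative holds anyway — worth one sentence in a written-up version. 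With that observation and the routine checks you flag for $t_1\in\{0,\infty\}$ and $t_0=\infty$, the argument is complete.
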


Finally, we recall the following classical result on ramification in specializations; see, e.g., \cite[Proposition 4.2]{Bec91}, \cite{Con00}, 
and \cite[\S2.2.3]{Leg16c}. 

Let $E/k(T)$ be a $k$-regular $G$-extension with branch points $t_1,\dots,t_r$. For every $i \in \{1, \dots ,r\}$, denote the inertia group of $E(t_i)/k(t_i)(T)$ at $(T-t_i)\, k(t_i)[T-t_i]$ by $I_{t_i}$.

\vspace{2.5mm}

\noindent
{\textbf{Specialization Inertia Theorem.}} 
{\textit{Assume that $\fp$ is not in some finite set $\mathcal{S}_{\rm{bad}}:= \mS_{\rm{bad}}(E/k(T))$ of prime ideals of $R$ \footnote{See, e.g., \cite[\S2.2.3]{Leg16c} for an explicit description of these ``bad" primes.}. Let $t_0 \in \mathbb{P}^1(k) \setminus \{t_1,\dots,t_r\}$. 
\begin{itemize}
\item[(1)] If $\fp$ ramifies in $E_{t_0}/k$, then, $t_0$ meets some branch point $t_i$ of $E/k(T)$ modulo $\fp$.
\item[(2)] Suppose that $t_0$ and $t_i$ meet modulo $\fp$. Then, $I_{t_0,\fp}$ is conjugate to $I_{t_i}^{I_{\fp}(t_0,t_i)}$. 
\end{itemize}}}

\section{Main results} \label{sec:main}
In this section, we state and prove Theorems \ref{thm main}, \ref{thm fields}, and \ref{chebotarev}, which are the most general versions of Theorems \ref{thm:main}, \ref{thm:fields}, and \ref{cor:reoc}, respectively. 
Throughout this section, we use the notation of \S\ref{sec:basics}-\ref{sec:basics_inertia}. Let $k$ be the fraction field of a Dedekind domain $R$ of characteristic zero such that, for every prime ideal $\fp$ of $R$, the residue field $\overline{k}_\fp$ is perfect. Given an indeterminate $T$ over $k$ and a finite group $G$, let $E/k(T)$ be a $k$-regular $G$-extension with branch points $t_1,\dots,t_r$. Fix $i\in\{1,\ldots,r\}$. Recall that $D_{t_i}$ (resp., $I_{t_i}$) is the decomposition group (resp., the inertia group) of $E(t_i)/k(t_i)(T)$ at $(T-t_i) \, k[T-t_i]$, and that $D_{t_i,\fp'}$ is the decomposition group of $(E(t_i))_{t_i}/k(t_i)$ at a prime $\fp'$ of $k(t_i)$, so that $D_{t_i,\fp'}$ is canonically identified with a subgroup of $D_{t_i}/I_{t_i}$. Set $e_i:=|I_{t_i}|$, and let $\varphi:D_{t_i} \rightarrow D_{t_i}/I_{t_i}$ be the natural projection.

\subsection{Statement of Theorem \ref{thm main}} \label{sec:main_statement}

\begin{theorem} \label{thm main}
Assume that $t_0\in \mathbb{P}^1(k) \setminus \{t_1,\dots,t_r\}$ and $t_i$ meet modulo a prime $\fp$ of $k$ avoiding a finite set $\mathcal{S}_{\rm{exc}} := \mS_{\rm{exc}}(E/k(T))$ of primes of $k$. Let $\fp'=\fp'(t_0,t_i,\fp)$ be the unique prime ideal lying over $\fp$ in $k(t_i)/k$ provided by Lemma \ref{lemma meeting}\footnote{To make this well-defined, we assume in particular that $\mathcal{S}_{\rm{exc}}$ contains the set $\mathcal{S}_{1}$ from Lemma \ref{lemma meeting}.}.
\begin{itemize}
\item[(1)] The decomposition group $D_{t_0,\fp}$ is conjugate in $G$ to a subgroup $U$ of $D_{t_i}$ such that $\varphi(U) = D_{t_i, \fp'}$. Moreover, if $I_\fp(t_0,t_i)$ is coprime to $e_i$, one has $U=\varphi^{-1}(D_{t_i, \fp'})$. 
\item[(2)] The unramified part of the completion $E_{t_0}\cdot k_\fp$ of $E_{t_0}$ at $\fp$ contains $(E(t_i))_{t_i}\cdot k(t_i)_{\fp'}$, with equality if $I_\fp(t_0,t_i)$ is coprime to $e_i$.
\end{itemize}
\end{theorem}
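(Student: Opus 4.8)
The plan is to compare the arithmetic of the specialization at $\fp$ with the geometry at the branch point $t_i$ inside a single two-dimensional local object, as outlined in the introduction. First I would enlarge $\mathcal{S}_{\rm{exc}}$ so that, for $\fp\notin\mathcal{S}_{\rm{exc}}$, the prime $\fp'$ is unramified of residue degree one over $\fp$ (so that $k(t_i)_{\fp'}=k_\fp$ and $t_i\in k_\fp$), the reductions are governed by Lemma \ref{dedekind}, $e_i=|I_{t_i}|$ is a unit at $\fp$, and the finitely many ``denominator'' primes produced by Theorem \ref{thm:eisen} for the algebraic power series defining $E$ near $t_i$ are excluded. Writing $\mathcal{O}$ for the valuation ring of $k_\fp$, set $A:=\mathcal{O}[[T-t_i]]$ and $F:=\Frac(A)$; this is the fraction field of a two-dimensional regular complete local domain, it contains $k(T)$, and it carries two height-one primes of interest, $(T-t_i)$ and $(T-t_0)=(T-t_i)-(t_0-t_i)$. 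Since $t_0$ and $t_i$ meet modulo $\fp'$, one has $t_0-t_i\in\fp\mathcal{O}$ with $v_\fp(t_0-t_i)=I_\fp(t_0,t_i)=:a$ by Lemma \ref{equiv}, so both primes reduce modulo $\fp$ to the single prime $(T-t_i)$ of $\overline k_\fp[[T-t_i]]$.

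Next I would compute $\Gal(E\cdot F/F)$. Completing $F$ at $(T-t_i)$ yields, by the Cohen structure theorem in characteristic zero, the Laurent series field $k_\fp((T-t_i))$, and the completion of $E\cdot F$ there is $E\cdot k_\fp((T-t_i))$. I would read off its Galois group from the Brauer embedding problem description of the branch point recalled in \S\ref{sec:basics_laurent}: over $k(t_i)((T-t_i))$ the group is $D_{t_i}$, with unramified part $(E(t_i))_{t_i}((T-t_i))$ and quotient $D_{t_i}/I_{t_i}$ via $\varphi$; base-changing along the constant extension $k(t_i)\to k_\fp$, which is unramified in the $(T-t_i)$-adic valuation and whose residue extension realizes $D_{t_i,\fp'}$, retains the full inertia $I_{t_i}$ and cuts the residue part down to $D_{t_i,\fp'}$. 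Hence the decomposition group of $E\cdot F/F$ at $(T-t_i)$ equals $\varphi^{-1}(D_{t_i,\fp'})$. This is where Theorem \ref{thm:eisen} enters: the Puiseux expansions of the roots of the defining polynomial of $E$ about $t_i$ have coefficients in $k(t_i)$ that are $\fp$-integral for $\fp\notin\mathcal{S}_{\rm{exc}}$, so $E\cdot F$ is generated over $F$ by series already converging in a tame extension of $k_\fp((T-t_i))$; consequently $[E\cdot F:F]=[E\cdot k_\fp((T-t_i)):k_\fp((T-t_i))]$, the prime $(T-t_i)$ is \emph{inert}, and $\Gal(E\cdot F/F)=\varphi^{-1}(D_{t_i,\fp'})$.

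I would then reduce $E\cdot F/F$ modulo $(T-t_0)$. Completing $F$ at $(T-t_0)$ again gives $k_\fp((T-t_0))$ with residue field $k_\fp$, and since $t_0$ is not a branch point the extension is unramified in the $(T-t_0)$-adic direction; its residue extension is precisely the completion $(E_{t_0})_\fp=E_{t_0}\cdot k_\fp$ of the specialization at $\fp$. Thus $D_{t_0,\fp}$ is identified with the decomposition group of $E\cdot F/F$ at $(T-t_0)$, a subgroup of $\varphi^{-1}(D_{t_i,\fp'})$. To pin it down I would combine two inputs. For the inertia, the Specialization Inertia Theorem gives that $I_{t_0,\fp}$ is conjugate to $I_{t_i}^{a}$, which has order $e_i/\gcd(a,e_i)$ because tame inertia at a branch point is cyclic of order $e_i$; in particular $I_{t_0,\fp}=I_{t_i}$ exactly when $a$ is coprime to $e_i$. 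For the unramified part, the key step is to compare the two sections modulo $\fp$: the integral closure $\mathcal{B}$ of $A$ in $E\cdot F$ has a single special fibre over the closed point $(\fp,\,T-t_i)$ of $\Spec A$, through which both $(T-t_i)$ and $(T-t_0)$ factor (they agree modulo $\fp$); matching the residue data along the two sections shows that the unramified part of $(E_{t_0})_\fp$ contains $(E(t_i))_{t_i}\cdot k(t_i)_{\fp'}$ and that $\varphi(D_{t_0,\fp})=D_{t_i,\fp'}$, which is part (2) and the first assertion of part (1). Finally, when $a$ is coprime to $e_i$ the inertia is full, so $|D_{t_0,\fp}|=e_i\,|D_{t_i,\fp'}|=|\varphi^{-1}(D_{t_i,\fp'})|$, and the inclusion $D_{t_0,\fp}\le\varphi^{-1}(D_{t_i,\fp'})$ becomes an equality, yielding equality in both parts.

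The main obstacle I anticipate is controlling the passage to completions: showing that completing at $(T-t_i)$ does not enlarge the Galois group (the inertness statement) and that the reduction at $(T-t_0)$ recovers exactly the unramified part $(E(t_i))_{t_i}\cdot k(t_i)_{\fp'}$, neither more nor less. Both rest on the good-reduction hypotheses absorbed into $\mathcal{S}_{\rm{exc}}$ together with the integrality of the branch-point Puiseux coefficients from Theorem \ref{thm:eisen}, and on the delicate bookkeeping forced by $t_i$ not being $k$-rational, so that the decomposition datum $D_{t_i,\fp'}$ must be tracked through the degree-one prime $\fp'$ and matched across the two sections that coincide on the special fibre.
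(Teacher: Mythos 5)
Your proposal is correct and follows essentially the same route as the paper: you work in the fraction field $F$ of $\mathcal{O}[[T-t_i]]$ (the paper's $R'_{\fp'}[[S]]$), use Theorem \ref{thm:eisen} to show that passing from $k(t_i)_{\fp'}((T-t_i))$ down to $F$ does not change the Galois group $\varphi^{-1}(D_{t_i,\fp'})$ (the paper's Lemma \ref{lemma 1 step 2}), reduce $E\cdot F/F$ at the unramified prime $(T-t_0)$ to recover $(E_{t_0})_\fp/k_\fp$ via Lemma \ref{dedekind}, and invoke the Specialization Inertia Theorem to upgrade the containment to equality when $I_\fp(t_0,t_i)$ is coprime to $e_i$. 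The only places where the paper is more explicit than your sketch are the degree count $[E'_{t_i}\cdot F:F]=[E'_{t_i}\cdot k(t_i)_{\fp'}:k(t_i)_{\fp'}]$, which is what makes "matching the residue data along the two sections" yield $\varphi(D_{t_0,\fp})=D_{t_i,\fp'}$ exactly, but this follows from the same linear-disjointness argument you describe.
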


As the specialization $E_{t_0}/k$ is Galois, the compositum $E_{t_0}\cdot k_\fp$ is independent of the embedding of $E_{t_0}$ into a given algebraic closure of $k_\fp$. Similarly, $(E(t_i))_{t_i}\cdot k(t_i)_{\fp'}$ is independent of the embedding of $(E(t_i))_{t_i}$ into a given algebraic closure of $k(t_i)_{\fp'}$. 

\begin{remark} \label{rk thm main} \
(1) Let $\fp$ be a prime of $k$, not in $\mathcal{S}_{\rm{exc}}$, and let $\fp'$ be a prime lying over $\fp$ in $k(t_i)/k$ with residue degree $f_{\fp' | \fp}=1$. As in part (2) of Remark \ref{remark meeting}, there exist infinitely many $t_0\in k$ such that $t_0$ and $t_i$ meet modulo $\fp'$, and such that $I_\fp(t_0,t_i)$ is coprime to $e_i$. Thus, as a consequence of part (1) of Theorem \ref{thm main} and of the Specialization Inertia Theorem, there exist infinitely many $t_0 \in k \setminus \{t_1,\dots,t_r\}$ such that $I_{t_0, \fp}$ is conjugate to $I_{t_i}$, and $D_{t_0,\fp}$ is conjugate to  $\varphi^{-1}(D_{t_i,\fp'})$. 

\vspace{0.5mm}

 \noindent 
(2) Part (2) of Theorem \ref{thm main} implies that the residue degree of $(E(t_i))_{t_i}/k(t_i)$ at $\fp'$ divides that of $E_{t_0}/k$ at $\fp$, with equality if $I_\fp(t_0,t_i)$ is coprime to $e_i$. 

\vspace{0.5mm}

\noindent (3) The assumption that $I_\fp(t_0,t_i)$ is coprime to $e_i$ is necessary in general for the more precise conclusions in parts (1) and (2) of Theorem \ref{thm main}, as the following easy example shows. Set $k:=\Qq$, $E:=\Qq(\sqrt{T})$, $t_i:=0$, and let $p$ be a prime number. Since $E/\Qq(T)$ is totally ramified at $t_i$, one has $E_{t_i}=\Qq$, and $|I_{t_i}|=|D_{t_i}|=2$. Set $t_0:=\alpha\cdot p^2$, where $\alpha$ denotes a rational number of $p$-adic valuation $0$. Then, one has $({E_{t_0}})_p=\Qq_p$ if $\alpha$ is a square modulo $p$ (and hence $1=|D_{t_0,p}|<|\varphi^{-1}(D_{t_i,p})|=2$), whereas $(E_{t_0})_p/\Qq_p$ has degree $2$ if $\alpha$ is not a square modulo $p$, and therefore $E_{t_i}\cdot \Qq_p \subsetneq E_{t_0}\cdot \Qq_p$ in this case. 
\end{remark}

\subsection{Proof of Theorem \ref{thm main}} \label{sec:main_proof}

By possibly changing the variable $T$, we may assume that $t_i$ is not equal to $\infty$, and that $t_i$ is integral over $R$. For simplicity, set $k':=k(t_i)$, $k'_{\fp'} :=k(t_i)_{\fp'}$, $E':=E(t_i)$, $E'_{t_i}:=(E(t_i))_{t_i}$,  $S:=T-t_i$, and let $R'$ be the integral closure of $R$ in $k'$. From now on, we fix an embedding of $E$ into a given algebraic closure $\widetilde{k'_{\fp'}((S))}$ of $k'_{\fp'}((S))$. Every compositum of fields below has to be understood inside $\widetilde{k'_{\fp'}((S))}$. 

We break the proof into four steps.

\subsubsection{Step I} \label{sec:main_completion} \label{Step I}

Here, we identify the Galois group of $E'\cdot k'((S))/k'((S))$ with $D_{t_i}$ and that of $E'\cdot k'_{\fp'}((S))/k'_{\fp'}((S))$ with a subgroup of $D_{t_i}$, namely, with $\varphi^{-1}(D_{t_i,\fp'})$. 

First, consider the compositum of $E' $ and $k'((S))$. The restriction of the Galois group $\Gal(E'\cdot  k'((S))/k'((S)))$ to $E'/k'(S)$ preserves a prime $\mathfrak{Q}$ of $E'$ lying over the prime generated by $S$. Hence, we identify $\Gal(E'\cdot  k'((S))/k'((S)))$ with the decomposition group $D_{t_i}$ of $E'/k'(S)$ at $\mathfrak{Q}$. Moreover, $E'_{t_i}/k'$ is the residue extension of $E'/k'(S)$ at $\mathfrak{Q}$, and the unramified part of $E' \cdot k'((S))/k'((S))$ is $E'_{t_i} \cdot k'((S))/ k'((S))$; see \S\ref{sec:basics_laurent}. Thus, $E'\cdot  k'((S))/ E'_{t_i} \cdot k'((S))$ is totally ramified at the prime generated by $S$, and its Galois group is identified with the inertia group $I_{t_i}$ of $E'/k'(S)$ at $\mathfrak{Q}$. 

Now, consider the compositum $E'_{t_i}\cdot k'_{\fp'}$. This defines an embedding of $E'_{t_i}$ into the algebraic closure of $k'_{\fp'}$ that is contained in $\widetilde{k'_{\fp'}((S))}$, and hence a prime $\fP'$ of $E'_{t_i}$ lying over $\fp'$ such that the restriction of $\Gal(E'_{t_i}\cdot k'_{\fp'}/k'_{\fp'})$ to $E'_{t_i}/k'$ preserves $\fP'$. Therefore, we identify $\Gal(E'_{t_i}\cdot k'_{\fp'}/k'_{\fp'})$ with the decomposition group $D_{t_i,\fp'}$ of $E'_{t_i}/k'$ at $\fP'$.

Next, consider the compositum $E' \cdot k'_{\fp'}((S))$. Its Galois group $V$ over $k'_{\fp'}((S))$ is identified with a subgroup of $D_{t_i} = \Gal(E'\cdot k'((S))/k'((S)))$ via restriction. Note that, as $E' \cdot k'((S)) / E'_{t_i}\cdot k'((S))$ is totally ramified, the fields $E' \cdot k'((S))$ and $E'_{t_i}\cdot k'_{\fp'}((S))$ are linearly disjoint over $E'_{t_i}\cdot k'((S))$. Hence, $\Gal(E'\cdot k'_{\fp'}((S))/E'_{t_i}\cdot k'_{\fp'}((S)))$ is identified with $I_{t_i} = \Gal(E'\cdot k'((S))/E'_{t_i}\cdot k'((S)))$, so that $\varphi(V) = D_{t_i,\fp'}$. 

We then obtain the following diagram of inclusions and Galois groups:
\begin{equation}\label{equ:diag1}
\xymatrix{
E' \ar@{-}[dd] \ar@{-}[r] & E' \cdot k'((S)) \ar@{-}[d]^{I_{t_i}} \ar@{-}@/_2.7pc/[dd]_{D_{t_i}}  \ar@{-}[r] &  E' \cdot k'_{\fp'}((S)) \ar@{-}[d]_{I_{t_i}} \ar@{-}@/^2.9pc/[dd]^{V}  \\
&  E'_{t_i}\cdot k'((S)) \ar@{-}[d] \ar@{-}[r] &  E'_{t_i}\cdot k'_{\fp'}((S)) \ar@{-}[d]_{D_{t_i,\fp'}} \\
k'(S)  \ar@{-}[r] &  k'((S))  \ar@{-}[r] &  k'_{\fp'}((S))
}
\end{equation}

\subsubsection{Step II} \label{Step II}

Our reduction process modulo $(T-t_0)$ will occur in the domain $R'_{\fp'}[[S]]$ \footnote{This domain is the completion of $R'[S]$ at the maximal ideal generated by $\fp'$ and $S$; see, e.g., \cite[Exercise 7.11]{Eis95}.}; see {Step III} (\S\ref{Step III}). Let $F$ be the fraction field of $R'_{\fp'}[[S]]$; note that $F\subseteq k'_{\fp'}((S))$. 
Here, we show that $E'_{t_i}\cdot F$ is contained in $E'\cdot F$, and identify the Galois groups of $E'_{t_i}\cdot F/F$ and $E'\cdot F/F$ with $D_{t_i,\fp'}$ and $\varphi^{-1}(D_{t_i,\fp'})$, respectively.

This description rests on the following lemma:


\begin{lemma} \label{lemma 1 step 2}
For every finite extension $M/k'(S)$, there exists a finite set $\mS_M$ of primes of $k'$ (depending only on $M/k'(S)$) such that, if $\fp'$ is not in $\mS_M$, then, the fields $M \cdot F$ and $k'_{\fp'}((S))$ are linearly disjoint over $F$.
\end{lemma}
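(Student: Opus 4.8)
The plan is to prove the equivalent statement that $N\cap k'_{\fp'}((S))\subseteq F$, where $N$ denotes the Galois closure of $M/k'(S)$; I claim this suffices after two reductions. First, since $M\cdot F\subseteq N\cdot F$ and linear disjointness from $k'_{\fp'}((S))$ over $F$ passes to intermediate fields, it is enough to treat $N$ in place of $M$ (and the resulting finite set still depends only on $M$). Second, as $N/k'(S)$ is Galois and $k'(S)\subseteq F\subseteq k'_{\fp'}((S))$, the extension $N\cdot F/F$ is Galois, so $N\cdot F$ and $k'_{\fp'}((S))$ are linearly disjoint over $F$ if and only if $(N\cdot F)\cap k'_{\fp'}((S))=F$. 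Comparing, via restriction to $N$, the groups $\Gal(N\cdot F/F)\cong\Gal(N/N\cap F)$ and $\Gal(N\cdot k'_{\fp'}((S))/k'_{\fp'}((S)))\cong\Gal(N/N\cap k'_{\fp'}((S)))$, and using that $N\cap F\subseteq N\cap k'_{\fp'}((S))$ is automatic, one checks that the inclusion $N\cap k'_{\fp'}((S))\subseteq F$ forces $N\cap F=N\cap k'_{\fp'}((S))$, whence the two groups coincide and $(N\cdot F)\cap k'_{\fp'}((S))=F$.

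The first substantive step is to locate the elements of $N\cap k'_{\fp'}((S))$. Since the uniformizer $S$ is shared, $k'_{\fp'}((S))/k'((S))$ is unramified, so any $\gamma\in N\cap k'_{\fp'}((S))$ generates an unramified subextension of $N\cdot k'((S))/k'((S))$ and therefore lies in its maximal unramified subextension $\overline N_0((S))$, where $\overline N_0$ is the residue field of $N$ at the place $(S)$, a fixed finite extension of $k'$ that does \emph{not} depend on $\fp'$. Equivalently, $\gamma$ is fixed by the inertia group at $(S)$, so $N\cap k'_{\fp'}((S))$ is contained in the inertia field $N^I$. The latter has only finitely many intermediate fields $N_1,\dots,N_m$ over $k'(S)$, each with a primitive element $\gamma_j\in\overline N_0((S))$ that can be chosen once and for all.

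The heart of the argument, and the main obstacle, is to show that each such $\gamma_j$ lies in $F$ as soon as $\fp'$ avoids a finite set depending only on $N$. Here I would apply Theorem~\ref{thm:eisen} over the Dedekind domain $R'_0$, the integral closure of $R'$ in $\overline N_0$: writing $\gamma_j=S^{n_j}\sum_{n\ge 0}c_n S^n$ with $c_n\in\overline N_0$ algebraic over $\overline N_0(S)$, there exist $\rho,\sigma\in R'_0$ with $\rho\,c_n\,\sigma^n\in R'_0$ for all $n$. The delicate point is that this bound only controls the $\fp'$-denominators of the $c_n$ at a \emph{geometric} rate $v_{\fp'}(\sigma)$, which is in general too weak to place the series in $F=\Frac(R'_{\fp'}[[S]])$. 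The resolution is that for all $\fp'$ not dividing $N_{\overline N_0/k'}(\rho\sigma)$ — a finite set, since $\rho,\sigma$ are fixed — the element $\sigma$ is a unit at $\fp'$, the denominators become \emph{bounded}, and hence (using $\gamma_j\in k'_{\fp'}((S))$, so that the $c_n$ land in $k'_{\fp'}$ under the chosen embedding of $\overline N_0$) the series $\sum c_n S^n$ lies in $R'_{(\fp')}[[S]]$ up to a fixed power of a uniformizer at $\fp'$. Since $R'_{(\fp')}\subseteq R'_{\fp'}$, this gives $\gamma_j\in F$.

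Finally, collecting into $\mathcal S_M$ the finitely many primes spoiling either reduction, those at which $\overline N_0/k'$ or the relevant minimal polynomials misbehave, and those dividing the Eisenstein parameters $\rho_j,\sigma_j$ attached to $\gamma_1,\dots,\gamma_m$, one obtains for $\fp'\notin\mathcal S_M$ that $N\cap k'_{\fp'}((S))$ equals some $N_j=k'(S)(\gamma_j)\subseteq F$, which is exactly the desired inclusion. The step I expect to require the most care is this last transition from Eisenstein's bounded-denominator estimate to genuine membership in the fraction field of the two-variable power series ring, together with the observation that the geometric-growth obstruction is confined to the finitely many primes dividing the Eisenstein scaling parameter.
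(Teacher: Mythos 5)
Your proof is correct and follows essentially the same route as the paper: reduce to the Galois closure, use the Galois correspondence to reduce linear disjointness to showing that $N\cap k'_{\fp'}((S))$ (one of finitely many pre-determined intermediate fields with fixed primitive elements) lies in $F$, and apply Eisenstein's theorem to place each such primitive element in $R'_{\fp'}[[S]]$ for all but finitely many $\fp'$. The only cosmetic differences are that you pre-filter to subfields of the inertia field (the paper instead allows Puiseux expansions and observes $e=1$ when the field sits in $k'_{\fp'}((S))$) and that you re-derive the finiteness of bad primes from the $\rho,\sigma$ bound, which is already the ``in particular'' clause of Theorem \ref{thm:eisen}.
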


\begin{proof}
Up to replacing $M/k'(S)$ by its Galois closure, we may assume that $M/k'(S)$ is Galois. Given an intermediate field $k'(S)\subseteq M_0\subseteq M$, let $\alpha:=\alpha(M_0)$ be a primitive element of $M_0/k'(S)$. We have 
\begin{equation} \label{alpha}
\alpha=\sum_{i \geq -n_0} \alpha_i \cdot S^{i/e}
\end{equation}
for some positive integer $e$, some non-negative integer $n_0$, and coefficients $\alpha_i$, $i\geq -n_0$, which lie in a finite extension $K/k'$. By possibly replacing $\alpha$ by $S^{n_0} \alpha$, we may assume $n_0=0$. As $\alpha$ is algebraic over $k'(S)$, Theorem \ref{thm:eisen} asserts that the set $\mathcal T_{M_0}$ of all primes $\fq'$ of $k'$ such that $v_{\fQ'}(\alpha_j) <0$ for some $j\geq 0$ and some prime $\fQ'$ of $K$ lying over $\fq'$ is finite, and depends only on $\alpha$, that is, only on $M_0$. 

Set $\mS_M := \cup_{M_0} \mathcal T_{M_0}$, where $M_0$ runs over all intermediate fields in $M/k'(S)$. Suppose that $\fp'$ is not in $\mS_M$, and set $F_1:=(M \cdot F)\cap k'_{\fp'}((S))$. To prove the lemma, it suffices to show that $F_1=F$. Since $F\subseteq F_1\subseteq M\cdot F$, and since the latter is Galois over $F$, the field $M_1:=F_1\cap M$ satisfies $M_1\cdot F = F_1$. Set  $\alpha:=\alpha(M_1)$. As $\alpha$ is an element of $k'_{\fp'}((S))$, one has $e=1$ (with the notation of \eqref{alpha}), so that $\alpha$ is an element of $R'_{\fp'}[[S]]$ (as $\fp'$ is not in $\mS_M$). Thus, $\alpha$, and hence $M_1$, are contained in $F$. Hence, $F_1$, which is equal to $M_1 \cdot F$, is equal to $F$, thus ending the proof of the lemma.
\end{proof}

Apply Lemma \ref{lemma 1 step 2} with $M=E' \cdot E'_{t_i}$ to get a finite set $\mS_M$ of primes of $k'$, depending only on $E/k(T)$, such that $(E' \cdot E'_{t_i}) \cdot F$ and $k'_{\fp'}((S))$ are linearly disjoint over $F$, provided $\fp'$ is not in $\mS_M$.
Set $L:= (E'_{t_i} \cdot k'_{\fp'}((S))) \cap (E' \cdot F)$. Since $(E'_{t_i}\cdot F)\cdot k'_\fp((S))\subseteq (E'\cdot F)\cdot k'_\fp((S))$, the previous linear disjointness provides $E'_{t_i}\cdot F\subseteq E'\cdot F$.  Hence, $E'_{t_i}\cdot F \subseteq L$. Conversely, $L$ contains $E'_{t_i}\cdot F$ by its definition and the inclusion $E'_{t_i}\cdot F\subseteq E'\cdot F$. Letting $\mS_2$ denote the finite set of primes of $k$ obtained by restricting a prime in $\mS_M$, the equality $L= E'_{t_i}\cdot F$ holds, provided $\fp \not\in \mS_2$. Set $d_{i,\fp'}:=[E'_{t_i}\cdot k'_{\fp'}:k'_{\fp'}]$. As $F$ contains $k'_{\fp'}=\Frac(R'_{\fp'})$, 
$$d_{i,\fp'}=[E'_{t_i}\cdot k'_{\fp'}:k'_{\fp'}]\geq [E'_{t_i}\cdot F:F] \geq [E'_{t_i}\cdot k'_{\fp'}((S)):k'_{\fp'}((S))] = d_{i,\fp'},$$
the last equality following from $E'_{t_i}\cdot k'_{\fp'}((S))/k'_{\fp'}((S))$ being unramified. Hence, $[E'_{t_i}\cdot F:F] = d_{i,\fp'}$. Moreover, as $E'\cdot F$ and $k'_{\fp'}((S))$ are linearly disjoint over $F$, the Galois group $\Gal(E'\cdot F/F)$ is identified with $V$ (=${\rm{Gal}}(E' \cdot k'_{\fp'}((S))/ k'_{\fp'}((S)))$; see \eqref{equ:diag1}) via restriction. This identification gives $\Gal(E'\cdot F/E'_{t_i}\cdot F) = I_{t_i}$, and $\Gal(E'_{t_i}\cdot F/F) = D_{t_i,\fp'}$. 

We then obtain the following diagram of inclusions and Galois groups:
\begin{equation}\label{equ:diag2}
\xymatrix{
E' \ar@{-}[rr] \ar@{-}[dd]& & E' \cdot F \ar@{-}[d]_{I_{t_i}} \ar@{-}[r] \ar@{-}@/^3pc/[dd]^>>>>>>>V &  E' \cdot k'_{\fp'}((S)) \ar@{-}[d]_{I_{t_i}} \ar@{-}@/^3pc/[dd]^>>>>>>>V \\
& E'_{t_i}\cdot k'_{\fp'} \ar@{-}[d]_{D_{t_i,\fp'}} \ar@{-}[r]&  L=E'_{t_i}\cdot F \ar@{-}[d]_{D_{t_i,\fp'}} \ar@{-}[r] &  E'_{t_i}\cdot k'_{\fp'}((S)) \ar@{-}[d]_{D_{t_i,\fp'}} \\
k'(S) \ar@{-}@/_1pc/[rr] & k'_{\fp'}  \ar@{-}[r] &  F  \ar@{-}[r] &  k'_{\fp'}((S)).
}
\end{equation}

\subsubsection{Step III}  \label{Step III}

Here, we reduce the extension $E'\cdot F /F$ modulo $(T-t_0)$, and identify the reduction with $E_{t_0}k_{\fp}/k_{\fp}$. 
More precisely, we show that there exists a place $\mathfrak{M}$ of $E'\cdot F$ whose restriction to $E$ has residue field $E_{t_0}$ and whose restriction to $F$ has residue field $k_\fp$. We then show that the residue field at $\mathfrak{M}$ is the compositum $E_{t_0} k_\fp$. 

As $t_0$ and $t_i$ meet modulo $\fp'$ by the definition of $\fp'$, and as $t_i$ is integral over $R$, one has $v_{\fp'}(t_0-t_i)>0$. Hence, $T-t_0 = S-(t_0-t_i)$ is in $R'_{\fp'}[[S]]$. Moreover, as $((t_0-t_i)^m)_{m \geq 1}$ converges to $0$ in $R'_{\fp'}$, the specialization map $R'_{\fp'}[[S]]\ra R'_{\fp'}$, which sends $S$ to $t_0-t_i$, is well-defined. As it is onto, there is a canonical isomorphism $R'_{\fp'}[[S]]/(T-t_0) \, R'_{\fp'}[[S]]\cong  R'_{\fp'}$. In particular, $(T-t_0) \, R'_{\fp'}[[S]]$ is a prime ideal of $R'_{\fp'}[[S]]$. Let $\mathfrak{R}$ be the localization of $R'_{\fp'}[[S]]$ at $(T-t_0)\, R'_{\fp'}[[S]]$. The previous isomorphism shows that the residue field of $\mathfrak{R}$ at $(T-t_0)\, \mathfrak{R}$ is canonically isomorphic to ${\rm{Frac}}(R'_{\fp'}) = k'_{\fp'}$. Let $\mathcal{S}_3$ be the finite set of primes of $k$ which ramify in $k'/k$. Assuming $\fp \not \in \mathcal{S}_3$, and using that $f_{\fp' | \fp}=1$ by the definition of $\fp'$, we get $k'_{\fp'} = k_\fp$. Hence, $\mathfrak{R}/(T-t_0)\, \mathfrak{R}$ is canonically isomorphic to $k_\fp$. We use this canonical isomorphism to identify the two fields. 

The integral closure $\mathfrak{S}$ of $\mathfrak{R}$ in $E'\cdot F$ is a Dedekind domain containing a prime ideal $\mathfrak{M}$ lying over $(T-t_0) \, \mathfrak{R}$. In particular, $\mathfrak{M}\cap E$ is a prime of $E$ lying over $(T-t_0) \, k[T]$. Let $\mathfrak R_{t_i}$ (resp., $\mathfrak{M}_{t_i}$) be the restriction to $E'_{t_i}\cdot F$ of $\mathfrak{S}$ (resp., of $\mathfrak{M}$). Since $E'_{t_i}$ and $k'_{\fp'}$ are contained in the residue field $\mathfrak{R}_{t_i}/\mathfrak{M}_{t_i}$, and since $[E'_{t_i}\cdot k'_{\fp'}:k'_{\fp'}]=d_{i,\fp'} = [E'_{t_i}\cdot F:F]$, we have $\mathfrak{R}_{t_i}/\mathfrak{M}_{t_i}=E'_{t_i}\cdot k'_{\fp'}$. Thus, we have the following diagram of inclusions:

\begin{equation}\label{equ:residue}
\xymatrix{
E_{t_0} = \oline E_{\mathfrak{M}\cap E} \ar@{-}[r] \ar@{-}[dd] & \mathfrak{S}/\mathfrak{M} \ar@{-}[d] \\
& \mathfrak{R}_{t_i}/\mathfrak{M}_{t_i}= E'_{t_i}\cdot k'_{\fp'} \ar@{-}[d] \\
k \ar@{-}[r] & \mathfrak{R}/(T-t_0) \, \mathfrak{R} = k_{\fp}=k'_{\fp'}.  
}
\end{equation}

Now, we claim that $\mathfrak{S}/\mathfrak{M}$ is equal to the compositum of $k_\fp$ and $E_{t_0}$, provided $\fp$ is not in some finite set of primes of $k$ defined below. Indeed, let $P(T,Y) \in R[T][Y]$ be the minimal polynomial over $k(T)$ of some primitive element of $E/k(T)$, assumed to be integral over $R[T]$. If $P(t_0,Y)$ is not separable, then, $t_0$ belongs to the finite set $D$ of all roots in $k$ of the discriminant $\Delta(T) \in R[T]$ of $P(T,Y)$ which are not branch points of $E/k(T)$. As $t_0$ and $t_i$ meet modulo $\fp$, the inseparability of $P(t_0,Y)$ implies that $\fp$ is in the finite set $\mS_4$ of primes $\fq$ of $k$ such that $v_{\fq'}(d-t_i)>0$ for some $d\in D$ and some prime $\fq'$ of $k'$ lying over $\fq$. Henceforth, we shall assume that $\fp$ is not in $\mS_4$, and then that $P(t_0,Y)$ is separable. Denote the roots of $P(T,Y)$ in $E$ by $y_1,\ldots,y_n$, and their reductions modulo $\mathfrak{M}\cap E$  by $\oline y_1,\dots,\oline y_n$, respectively. As $\Delta(t_0) \not=0$, the discriminant $\Delta(T)$ is not in $(T-t_0) \, \mathfrak{R}$. Since $E'\cdot F$ is generated by $y_1,\ldots,y_n$ over $F$, the residue field $\mathfrak{S}/\mathfrak{M}$ is generated by $\oline y_1,\ldots,\oline y_n$ over $\mathfrak{R}/(T-t_0) \, \mathfrak{R}=k_\fp$ by Lemma \ref{dedekind}. Thus, one has $\mathfrak{S}/\mathfrak{M} = E_{t_0}\cdot k_\fp$, proving the claim. The general containment in part (2) of Theorem \ref{thm main} follows from \eqref{equ:residue}, provided $\fp$ is not in $\mS_1\cup\mS_2\cup\mS_3\cup\mS_4$. 

\subsubsection{Step IV} 
Here, we use Step III to identify the Galois group of $E_{t_0}k_{\fp}/k_{\fp}$ with a subgroup of $\Gal(E'\cdot F/F)=\varphi^{-1}(D_{t_i,\fp'})$ whose projection under $\varphi$ equals $D_{t_i,\fp'}$. 

Assume that $\fp\not\in\mS_1\cup\mS_2\cup\mS_3\cup\mS_4$, so that we are in the situation of \eqref{equ:diag2} and \eqref{equ:residue}. Up to replacing $D_{t_0}$ by a conjugate of it, we may assume that $D_{t_0}$ is the decomposition group of $E/k(T)$ at $\mathfrak{M}\cap E$. Let $U$ be the decomposition group of $E'\cdot F/F$ at $\mathfrak{M}$, so that it identifies via restriction with a subgroup of $D_{t_0}$. As the prime of $k[T]$ generated by $T-t_0$ is unramified in $E/k(T)$, it is also unramified in $E'\cdot F/F$. Thus, $U$ (resp., $D_{t_0}$) is also the Galois group of $E_{t_0}\cdot k_\fp/k_\fp$ (resp., of $E_{t_0}/k$). Hence, the restriction of $U$ to $E_{t_0}/k$ is the decomposition group of some prime of $E_{t_0}$ lying over $\fp$. Thus, we may identify $U$ with $D_{t_0,\fp}$. 

On the other hand, $U$ is a subgroup of $\Gal(E'\cdot F/F)$. The latter is identified with $V$ in Step II (\S\ref{Step II}), and hence is a subgroup of $D_{t_i}$ via the identification in Step I (\S\ref{Step I}). Moreover, $\varphi(U)$ is the decomposition group of  $E'_{t_i}\cdot F/F$ at $\mathfrak{M}_{t_i}$. As shown in Step III (\S\ref{Step III}), one has $\Gal(E'_{t_i}\cdot F/F) = {\rm{Gal}}(E'_{t_i} \cdot k_{\fp'} / k_{\fp'})$. But the latter is $D_{t_i,\fp'}$ by Step II. Hence, $\varphi(U) = D_{t_i,\fp'}$, thus proving the general case of part (1) of Theorem \ref{thm main}. 

Finally, assume that $I_\fp(t_0,t_i)$ is coprime to $e_i$. Let $I_\mathfrak{M}$ denote the inertia group of $E'\cdot F/F$ at $\mathfrak{M}$. Since $E'_{t_i}\cdot F/F$ is unramified at $\mathfrak{M}_{t_i}$, one has $I_\mathfrak{M}\subseteq {\rm{Gal}}(E' \cdot F /E'_{t_i} \cdot F)$ ($=I_{t_i}$; see \S\ref{Step II}). Set $\mathcal{S}_{\rm{exc}} := \mS_1\cup\mS_2\cup\mS_3\cup\mS_4 \cup \mathcal{S}_{\rm{bad}}$, where $\mathcal{S}_{\rm{bad}}$ is the finite set of primes of $k$ from the Specialization Inertia Theorem (\S\ref{sec:basics_inertia}), and assume that $\fp$ is not in $\mathcal{S}_{\rm{exc}}$. As $I_\fp(t_0,t_i)$ is coprime to $e_i$, one has $|I_{t_0,\fp}|=e_i$ by the Specialization Inertia Theorem. Since the conjugation of $U$ to $D_{t_0,\fp}$ sends $I_\mathfrak{M}$ to $I_{t_0,\fp}$, we also have $|I_\mathfrak{M}|=e_i$. As $I_\mathfrak{M}$ is a subgroup of $I_{t_i}$,  and since $|I_{t_i}|=e_i$, we get the equality $I_\mathfrak{M} = I_{t_i}$. Thus, $I_{t_i}$ is contained in $U$, and $\mathfrak{M}_{t_i}$ is totally ramified in $E'\cdot F/E'_{t_i}\cdot F$. Hence, $U=\varphi^{-1}(D_{t_i,\fp'})$, and $(E_{t_0}\cdot k_\fp)^{\rm{ur}} = \mathfrak{R}_{t_i}/\mathfrak{M}_{t_i} = E'_{t_i}\cdot k'_{\fp'}$, completing the proof of Theorem \ref{thm main}.

\subsection{On specifying completions in specializations} \label{sec:main_applications}

Theorem \ref{thm main} restricts the structure of the completion $(E_{t_0})_\fp$ at $\fp$ of $E_{t_0}$. Namely, it implies that  $(E_{t_0})_\fp$ contains the field $N^{(\fp)}:=(E(t_i))_{t_i}\cdot k(t_i)_{\fp'}$ whose Galois group over $k(t_i)_{\fp'}$  
is $D_{t_i,\fp'}$, and that the Galois group $\Gal((E_{t_0})_\fp/k_\fp)$ is a subgroup of $\varphi^{-1}(D_{t_i,\fp'})$, where $\varphi:D_{t_i}\ra D_{t_i}/I_{t_i}$ is the natural projection. The following theorem shows that this is the only restriction for extensions of $k_\fp$ with Galois group $\varphi^{-1}(D_{t_i,\fp'})$: 

\begin{theorem} \label{thm fields}
Let $\mathcal{S}$ be a finite set of primes of $k$, disjoint from some finite set $\mathcal{S}'_{\rm{exc}}= \mS'_{\rm{exc}}(E/k(T))$. For each $\fp\in \mathcal{S}$, assume that there exists $i:=i(\fp)\in \{1, \dots ,r\}$ and a prime $\fp'$ lying over $\fp$ in $k(t_i)/k$ with residue degree $1$, and let $L^{(\fp)}/k_\fp$ be a finite Galois extension containing $N^{(\fp)}:=(E(t_i))_{t_i}\cdot k(t_i)_{\fp'}$ such that there exists an isomorphism
$\psi$ from ${\rm{Gal}}(L^{(\fp)}/k_\fp)$ to $\varphi^{-1}(D_{t_i,\fp'})$ which maps ${\rm{Gal}}(L^{(\fp)}/N^{(\fp)})$ onto $I_{t_i}$.
Then, there exist infinitely many $t_0\in k$ such that the completion of $E_{t_0}/k$ at $\fp$ equals $L^{(\fp)}/k_\fp$ for each $\fp\in \mathcal{S}$. Moreover, if $k$ is a number field,  we may assume that these specializations of $E/k(T)$ have Galois group $G$.
\end{theorem}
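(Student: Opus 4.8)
The plan is to reduce the statement to a single prime at a time and then recombine. For a number field $k$, once I produce, for each $\fp\in\mathcal{S}$, a point $t_\fp\in k$ whose specialization has completion $(E_{t_\fp})_\fp\cong L^{(\fp)}/k_\fp$, Lemma~\ref{PV} simultaneously yields infinitely many $t_0\in k$ with $(E_{t_0})_\fp=L^{(\fp)}$ for all $\fp\in\mathcal{S}$ and with $\Gal(E_{t_0}/k)=G$. For general $k$ one replaces Lemma~\ref{PV} by the classical weak approximation theorem applied to the finitely many inequivalent valuations $v_\fp$, $\fp\in\mathcal{S}$; this still produces infinitely many $t_0\in k$ realizing the prescribed local completions, but without control on the global Galois group (which is why $G$ is only claimed over number fields). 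Thus the whole content is the realization of a single prescribed $L^{(\fp)}$ as a completion of a specialization.

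To realize $L^{(\fp)}$, I would return to the reduction process in the proof of Theorem~\ref{thm main}. There the completion $(E_{t_0})_\fp$ is obtained by reducing the generic extension $E'\cdot F/F$ modulo $(T-t_0)$, i.e.\ by specializing $S=T-t_i$ to $\delta:=t_0-t_i\in k_\fp$. Combined with the Brauer--Kummer description of \S\ref{sec:basics_laurent}, where $E'_{\fp_i}=(E(t_i))_{t_i}((S))\bigl(\sqrt[e_i]{\alpha S}\bigr)$ for a fixed $\alpha\in (E(t_i))_{t_i}$, the reduction takes the explicit shape $(E_{t_0})_\fp=N^{(\fp)}\bigl(\sqrt[e_i]{\alpha\,\delta}\bigr)$ after base change to $k'_{\fp'}=k_\fp$. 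By Lemma~\ref{branchcycle} the field $N^{(\fp)}$ contains $\mu_{e_i}$, and the Brauer embedding problem structure identifies $I_{t_i}$ with $\mu_{e_i}$ as a $\Gal(N^{(\fp)}/k_\fp)$-module, an identification which extends over $k_\fp$. Consequently the set of finite Galois extensions $L^{(\fp)}/k_\fp$ solving the embedding problem $\varphi^{-1}(D_{t_i,\fp'})\twoheadrightarrow D_{t_i,\fp'}$ with kernel $I_{t_i}$ and containing $N^{(\fp)}$ is, once nonempty, a torsor under $H^1(k_\fp,\mu_{e_i})\cong k_\fp^\times/(k_\fp^\times)^{e_i}$, the action being the twist $N^{(\fp)}(\sqrt[e_i]{\gamma})\mapsto N^{(\fp)}(\sqrt[e_i]{\gamma c})$ by $c\in k_\fp^\times$.

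The key step is then to match this torsor action with the specialization map $\delta\mapsto (E_{t_0})_\fp=N^{(\fp)}(\sqrt[e_i]{\alpha\delta})$. A base solution exists by Theorem~\ref{thm main} together with Remark~\ref{rk thm main}(1) (any admissible $t_0$ gives one), so the torsor is nonempty; and since multiplying $\delta$ by $c\in k_\fp^\times$ multiplies the Kummer generator by the $k_\fp$-rational element $c$, letting $\delta$ range over $k_\fp^\times$ with $v_\fp(\delta)$ coprime to $e_i$ realizes the full orbit of solutions whose inertia group is $I_{t_i}$. Writing the prescribed $L^{(\fp)}=N^{(\fp)}(\sqrt[e_i]{\beta})$, this shows that $[\beta/\alpha]$ lies in the image of $k_\fp^\times$ in $(N^{(\fp)})^{\times}/((N^{(\fp)})^{\times})^{e_i}$, hence that some $\delta_\fp\in k_\fp^\times$, of valuation coprime to $e_i$, satisfies $N^{(\fp)}(\sqrt[e_i]{\alpha\delta_\fp})=L^{(\fp)}$.

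Finally I would pass from $\delta_\fp\in k_\fp$ to a rational point. The condition $t_0-t_i\in\delta_\fp\,(k_\fp^\times)^{e_i}$ is open in the $\fp$-adic topology and forces $t_0$ to meet $t_i$ modulo $\fp$ with intersection multiplicity $v_\fp(\delta_\fp)$ coprime to $e_i$ and at the prescribed prime $\fp'$ (via Lemma~\ref{lemma meeting} and Remark~\ref{remark meeting}(2); for $k$-rational $t_i$ one has $\fp'=\fp$). Enlarging $\mathcal{S}'_{\rm{exc}}$ to contain $\mathcal{S}_{\rm{exc}}$ and the primes that would force $t_0$ to meet a second branch point, weak approximation produces infinitely many $t_0\in k$ satisfying these open conditions simultaneously for all $\fp\in\mathcal{S}$, whence $(E_{t_0})_\fp=L^{(\fp)}$ by Theorem~\ref{thm main}. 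The main obstacle is the surjectivity asserted in the previous paragraph: verifying that the elementary specialization $\delta\mapsto[\alpha\delta]$ sweeps out the entire torsor of Brauer solutions. This is exactly where the local Kummer-theoretic description of $H^1(k_\fp,\mu_{e_i})$ and the rationality of the twisting parameter $\delta\in k_\fp$ are essential, and it is the delicate part of the argument.
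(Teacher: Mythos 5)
Your proposal follows essentially the same route as the paper's proof: realize $E'\cdot F$ as the Kummer extension $(E'_{t_i}\cdot F)(\sqrt[e_i]{\alpha S})$ of its unramified part, observe that specializing $S\mapsto\delta:=t_0-t_i$ produces $N^{(\fp)}(\sqrt[e_i]{\alpha\delta})$, classify the admissible $L^{(\fp)}$ as the solution fields of a Brauer embedding problem --- all of the form $N^{(\fp)}(\sqrt[e_i]{\beta x})$ with $\beta\in k_\fp^\times$ (the paper cites the Malle--Matzat theorem on Brauer embedding problems rather than phrasing the solution set as a torsor under $k_\fp^\times/(k_\fp^\times)^{e_i}$, but the content is the same) --- and finish with Krasner's lemma, weak approximation, and Lemma \ref{PV}. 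Two points deserve attention. First, a minor one: the identity $(E_{t_0})_\fp=N^{(\fp)}(\sqrt[e_i]{\alpha\delta})$ requires knowing that the Kummer generator $\sqrt[e_i]{\alpha S}$ already generates $E'\cdot F$ over $E'_{t_i}\cdot F$, i.e.\ that the description descends from $k'_{\fp'}((S))$ to $F$; the paper secures this with an extra application of Lemma \ref{lemma 1 step 2} (showing $M\cdot F=E'\cdot F$ for $M=(E'_{t_i}\cdot k'(S))(\sqrt[e_i]{\alpha S})$), a step you gloss over but which is in the spirit of ``returning to the reduction process.''

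The one genuine slip is your insistence that $\delta_\fp$ have valuation \emph{coprime} to $e_i$. Theorem \ref{thm fields} does not assume $L^{(\fp)}/N^{(\fp)}$ totally ramified (see Remark (1) following it), and a class in $k_\fp^\times/(k_\fp^\times)^{e_i}$ whose valuation is divisible by $e_i$, or shares a nontrivial factor with $e_i$, admits no representative of valuation coprime to $e_i$; as written, your argument therefore only reaches the solutions with $L^{(\fp)}/N^{(\fp)}$ totally ramified, and the deduction ``hence some $\delta_\fp$ of valuation coprime to $e_i$ satisfies $N^{(\fp)}(\sqrt[e_i]{\alpha\delta_\fp})=L^{(\fp)}$'' does not follow for the remaining classes. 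What is actually needed --- and what the paper arranges by multiplying $\beta$ by a suitable $e_i$-th power --- is only that $\alpha\delta_\fp$ have \emph{positive} valuation, so that the specialization $S\mapsto\delta_\fp$ converges in $R'_{\fp'}[[S]]$ and $t_0$ meets $t_i$ modulo $\fp$. Every class of $k_\fp^\times/(k_\fp^\times)^{e_i}$ has a representative of positive valuation, so with this weaker requirement the specialization map does sweep out the full torsor and your argument closes.
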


\begin{remark}
(1) If $L^{(\fp)}/N^{(\fp)}$ is totally ramified, $\fp \in \mS$, and if $k$ is a number field, then, we are in the settings of Theorem \ref{thm:fields}. In this case, by the Specialization Inertia Theorem, the intersection multiplicity of $t_0$ and $t_{i}$ at $\fp$ is coprime to $|I_{t_{i}}|$, $\fp \in \mS$. \\
(2) By combining Theorem \ref{thm fields}, \cite[Theorem 1.2]{DG12}, and the Chinese remainder theorem, one can more generally require the extensions $L^{(\fp)}/k_\fp$, $\fp \in \mS$, to be either of the above form or unramified of degree $d$, where $d$ is an arbitrary positive integer such that $G$ contains at least one element of order $d$.
\end{remark}

\begin{proof}
Fix a prime $\fp\in \mathcal{S}$. As in \S\ref{sec:main_proof}, set $k':=k(t_i)$, $k'_{\fp'} :=k(t_i)_{\fp'}$, $E':=E(t_i)$, $E'_{t_i}:=(E(t_i))_{t_i}$, and $S:=T-t_i$. Moreover, set $N:=N^{(\fp)}$, and $\Gamma:=\varphi^{-1}(D_{t_i,\fp'})$. As shown in Step I of the proof of Theorem \ref{thm main} (\S\ref{Step I}), one has ${\rm {Gal}}(N/k_\fp)=D_{t_i,\fp'}$. Then, all fields $L^{(\fp)}$ with the properties mentioned in Theorem \ref{thm fields} are solution fields to the embedding problem 
$\Gamma \stackrel{\varphi}{\to} {\rm {Gal}}(N/k_\fp)$. 
Since $I_{t_i}$ (resp., $\Gamma$) is the inertia group (resp., the Galois group) of $E'\cdot k'_{\fp'}((S)) / k'_{\fp'}((S))$ (as shown in Step I of the proof of Theorem \ref{thm main} (\S\ref{Step I})), the action of $\Gamma$ on the cyclic kernel $I_{t_i}$ is isomorphic to the action on $e_i$-th roots of unity in $N$ (see \S\ref{sec:basics_laurent}). The above embedding problem is then a Brauer embedding problem. By \cite[Chapter IV, Theorem 7.2]{MM99}, if $x\in N$ is chosen such that $N(\sqrt[e_i]{x})$ is a solution field to this embedding problem, then, all the solutions fields are of the form $N(\sqrt[e_i]{\beta x})$ with $\beta \in k_\fp^\times$. Furthermore, upon multiplying $\beta$ by a suitable $e_i$-th power, we can require $\beta x$ to be of positive $\fp$-adic valuation.

The field $E'\cdot k'((S))$ is generated over $E'_{t_i}\cdot k'((S))$ by $\sqrt[e_i]{\alpha S}$ for some $\alpha \in E'_{t_i}$; see \S\ref{sec:basics_laurent}. Up to enlarging the set $\mathcal{S}'_{\rm{exc}}$, we may assume that $\alpha$ is of $\fp$-adic valuation $0$. Set $M:=(E'_{t_i}\cdot k'(S))(\sqrt[e_i]{\alpha S})$, and consider the field $F$ from Step II of the proof of Theorem \ref{thm main} (\S\ref{Step II}). Assume that $\mathcal{S}'_{\rm{exc}}$ contains the set $\mathcal{S}_{\rm{exc}}$ from Theorem \ref{thm main}. Then, $(E'\cdot M)\cdot F$ and $k'_{\fp'}((S))$ are linearly disjoint over $F$ by Lemma \ref{lemma 1 step 2}. But one also has $(M\cdot F) \cdot k'_{\fp'}((S)) = E'\cdot k'_{\fp'}((S))$. Hence, $M\cdot F = E'\cdot F$.

Thus, we have $E'\cdot F = (E'_{t_i}\cdot F)(\sqrt[e_i]{\alpha S})$. Next, we choose $t_0\in k_\fp$ with $I_\fp(t_0,t_i)>0$ and specialize $E'\cdot F/F$ at $T-t_0$ as in Step III of the proof of Theorem \ref{thm main} (\S\ref{Step III}). Since $E'_{t_i}\cdot F$ specializes to $E'_{t_i} \cdot k'_{\fp'}$ and $\sqrt[e_i]{\alpha S}$ specializes to $\sqrt[e_i]{\alpha(t_0-t_i)}$, Lemma \ref{dedekind} yields that all fields of the form $(E'_{t_i} \cdot k'_{\fp'})(\sqrt[e_i]{\alpha \pi})$ ($=N(\sqrt[e_i]{\alpha \pi})$), with $\pi$ an element of $k_\fp=k'_{\fp'}$ of positive $\fp$-adic valuation, can be reached via specializing $T$ to $t_0\in k_\fp$. Hence, if $N(\sqrt[e_i]{x})$ is one of the fields obtained in this way, then, we reach all fields of the form $N(\sqrt[e_i]{\beta x})$, where $\beta$ is any element of $k_\fp^\times$ such that $\beta x$ is of positive $\fp$-adic valuation. As shown above, this covers in particular the extension $L^{(\fp)}/k_\fp$.
Krasner's lemma shows that $L^{(\fp)}/k_\fp$ can even be obtained by restricting to specialization values $t_\fp\in k$.
The extension $L^{(\fp)}/k_\fp$ therefore equals $(E_{t_\fp})_\fp/k_\fp$, the latter being the specialization of $E'_{t_i}\cdot F/F$ at $T-t_\fp$. 

Finally, choose $t_0\in k$ sufficiently close $\fp$-adically to every $t_\fp\in \mathcal{S}$. This yields the assertion in the general case. In the  number field case, Lemma \ref{PV} provides the conclusion on the Galois group of the produced specializations.
\end{proof}

\subsection{On specifying inertia and decomposition groups of specializations} \label{sec:main_chebotarev}

Our next result is devoted to a re-occurrence property for subgroups of $G$ appearing as decomposition groups of specializations, in the case where $k$ is a number field.

\begin{theorem} \label{chebotarev}
Assume that $k$ is a number field, and let $s$ be a positive integer. Given $j \in \{1,\dots,s\}$, choose a branch point $t_{i(j)}$ of $E/k(T)$, and an element $\tau_{i(j)}$ of $D_{t_{i(j)}}$. 
Then, there exist $s$ infinite sets $\mathcal{S}_1, \dots, \mathcal{S}_s$ of primes of $k$ which satisfy the following property. For every tuple $(\fp_1, \dots, \fp_s) \in \mathcal{S}_1 \times \dots \times \mathcal{S}_s$ 
of distinct primes, there exist infinitely many $t_0$ in $k$ for which $E_{t_0}/k$ is a $G$-extension whose decomposition group (resp., inertia group)  
at $\fp_j$ is $\langle I_{t_{i(j)}}, \tau_{i(j)} \rangle$ (resp., $I_{t_{i(j)}}$) for $j=1,\ldots,s$. 
\end{theorem}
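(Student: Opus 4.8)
The plan is to produce, for each $j$, an infinite set $\mathcal{S}_j$ of primes of $k$ each carrying local data at $t_{i(j)}$ matching the prescribed group $\langle I_{t_{i(j)}},\tau_{i(j)}\rangle$, and then to realize all these local data at once by a single specialization, using Theorem \ref{thm fields}. Fix $j$ and abbreviate $t_i := t_{i(j)}$, $\tau := \tau_{i(j)}$, and $N := (E(t_i))_{t_i}$, recalling that $N/k(t_i)$ is Galois with group $D_{t_i}/I_{t_i}$ and that $\varphi\colon D_{t_i}\ra D_{t_i}/I_{t_i}$ is the natural projection. Since $I_{t_i}=\ker\varphi$ is normal in $D_{t_i}$ and $\tau\in D_{t_i}$, one has the identity $\langle I_{t_i},\tau\rangle = \varphi^{-1}(\langle\varphi(\tau)\rangle)$. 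Thus, in view of Theorem \ref{thm fields}, it suffices to find infinitely many primes $\fp$ of $k$, each admitting a prime $\fp'$ over $\fp$ in $k(t_i)/k$ with residue degree $f_{\fp'\vert\fp}=1$ for which the decomposition group $D_{t_i,\fp'}$ of $N/k(t_i)$ at $\fp'$ equals the cyclic group $\langle\varphi(\tau)\rangle$: for such $\fp$ the admissible local extension $L^{(\fp)}/k_\fp$ furnished by Theorem \ref{thm fields} then has Galois group $\varphi^{-1}(\langle\varphi(\tau)\rangle)=\langle I_{t_i},\tau\rangle$ and inertia group $I_{t_i}$.

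These primes $\fp$ I would produce by Chebotarev's density theorem applied in the Galois closure. Let $\widehat N$ denote the Galois closure of $N/k$, set $\mathcal{G}:=\Gal(\widehat N/k)$, $H:=\Gal(\widehat N/k(t_i))$, and $K:=\Gal(\widehat N/N)\trianglelefteq H$, so that $H/K$ is canonically identified with $\Gal(N/k(t_i))=D_{t_i}/I_{t_i}$. Since $H\ra H/K$ is onto, one may choose a lift $\widehat\tau\in H$ of $\varphi(\tau)\in H/K$; let $C$ be the conjugacy class of $\widehat\tau$ in $\mathcal{G}$. By Chebotarev, infinitely many primes $\fp$ of $k$, unramified in $\widehat N$, have Frobenius class $C$; discarding the finitely many primes lying in $\mathcal{S}'_{\rm{exc}}(E/k(T))$, ramifying in $\widehat N/k$, or ramifying in $N/k(t_i)$, still leaves an infinite set, which I take to be $\mathcal{S}_j$. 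For each $\fp\in\mathcal{S}_j$ one may choose a prime $\widehat\fp$ of $\widehat N$ over $\fp$ whose Frobenius equals $\widehat\tau$. As $\widehat\tau\in H$ fixes $k(t_i)$, the decomposition group $\langle\widehat\tau\rangle$ of $\widehat\fp$ over $\fp$ is contained in $H$, whence the prime $\fp':=\widehat\fp\cap k(t_i)$ has residue degree $f_{\fp'\vert\fp}=1$, and the Frobenius of $N/k(t_i)$ at $\fp'$ is the image of $\widehat\tau$ in $H/K$, namely $\varphi(\tau)$; since $\fp'$ is unramified in $N/k(t_i)$, this gives $D_{t_i,\fp'}=\langle\varphi(\tau)\rangle$, as required.

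Finally, given a tuple $(\fp_1,\dots,\fp_s)\in\mathcal{S}_1\times\cdots\times\mathcal{S}_s$ of distinct primes, I would assign to each $\fp_j$ the branch point $t_{i(j)}$ and the degree-one prime $\fp'_j$ constructed above, and choose for each $j$ a finite Galois extension $L^{(\fp_j)}/k_{\fp_j}$ containing $N^{(\fp_j)}=(E(t_{i(j)}))_{t_{i(j)}}\cdot k(t_{i(j)})_{\fp'_j}$, totally ramified over $N^{(\fp_j)}$, with the isomorphism $\psi$ of Theorem \ref{thm fields} mapping $\Gal(L^{(\fp_j)}/N^{(\fp_j)})$ onto $I_{t_{i(j)}}$; such an $L^{(\fp_j)}$ is a totally ramified solution to the relevant Brauer embedding problem and has inertia group $I_{t_{i(j)}}$ and Galois group $\langle I_{t_{i(j)}},\tau_{i(j)}\rangle$ over $k_{\fp_j}$. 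Theorem \ref{thm fields} then provides infinitely many $t_0\in k$ whose specialization $E_{t_0}/k$ has completion $L^{(\fp_j)}/k_{\fp_j}$ at each $\fp_j$, hence decomposition group $\langle I_{t_{i(j)}},\tau_{i(j)}\rangle$ and inertia group $I_{t_{i(j)}}$ at $\fp_j$; since $k$ is a number field, these specializations may be taken to have Galois group $G$. The main obstacle is the second step: one must simultaneously force a degree-one prime over $\fp$ in the possibly non-Galois tower $k\subseteq k(t_i)\subseteq N$ and prescribe its Frobenius, which is exactly why one passes to $\widehat N$ and selects the Frobenius of $\widehat\fp$ to be a lift $\widehat\tau$ of $\varphi(\tau)$ lying in $H$.
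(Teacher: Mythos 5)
Your argument is correct in outline but takes a genuinely different route from the paper's. The paper first reduces to $s=1$ via Lemma \ref{PV}, and then deduces the single-prime case directly from part (1) of Theorem \ref{thm main} together with part (1) of Remark \ref{rk thm main}: it suffices to find infinitely many primes $\fp$ admitting a degree-one prime $\fp'$ of $k(t_i)$ above them whose Frobenius in $(E(t_i))_{t_i}/k(t_i)$ lies in the conjugacy class of $\varphi(\tau)$, and this is done by applying Chebotarev over $k(t_i)$ to the extension $(E(t_i))_{t_i}/k(t_i)$ and intersecting with the density-one set of residue-degree-one primes of $k(t_i)$. You instead (a) run Chebotarev over $k$ in the Galois closure $\widehat N$ of $N/k$, choosing a Frobenius lift $\widehat\tau$ inside $H=\Gal(\widehat N/k(t_i))$ to force both $f_{\fp'\vert\fp}=1$ and $\mathrm{Frob}_{\fp'}=\varphi(\tau)$ simultaneously --- this is a perfectly valid, and arguably cleaner, variant of the paper's density count; and (b) realize the local data via Theorem \ref{thm fields} rather than via Theorem \ref{thm main} plus weak approximation, which also lets you treat all $s$ primes at once without the separate reduction through Lemma \ref{PV}.

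The one point you should justify more carefully is the existence of the local fields $L^{(\fp_j)}$ you feed into Theorem \ref{thm fields}: you assert that a totally ramified solution to the Brauer embedding problem $\varphi^{-1}(D_{t_i,\fp'})\to\Gal(N^{(\fp)}/k_\fp)$ exists, but a Brauer embedding problem need not be solvable a priori, and \cite[Chapter IV, Theorem 7.2]{MM99} only parametrizes the solutions once one is known to exist. Within this paper the existence is supplied precisely by Theorem \ref{thm main} combined with Remark \ref{rk thm main}(1): the completions $(E_{t_0})_\fp$ of the specializations produced there are themselves such fields. Once you invoke that, however, you already have the desired decomposition and inertia groups without any appeal to Theorem \ref{thm fields} --- which is exactly the shortcut the paper takes.
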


\begin{proof}
We may assume without loss that $s=1$. Indeed, Lemma \ref{PV} yields that the assertion for one prime $\fp_i$ at a time ($i=1,\dots,s$) implies the assertion for all primes $\fp_1,\dots,\fp_s$ simultaneously. 
Also, by that lemma, we do need to prove that $E_{t_0}/k$ has Galois group $G$. As before, set $i:=i(1)$, $t_i:=t_{i(1)}$, $\tau := \tau_{i(1)}$, $k':=k(t_i)$, and $E'_{t_i}:= (E(t_i))_{t_i}$. Let $\overline{\tau}$ be the image of $\tau$ under the natural projection $D_{t_i} \rightarrow D_{t_i}/I_{t_i}$, and let $C$ be the conjugacy class in $D_{t_i}/I_{t_i}$ of $\overline{\tau}$. By part (1) of Theorem \ref{thm main} and part (1) of Remark \ref{rk thm main}, it suffices to show that there is an infinite set $\mathcal{S}$ of primes of $k$ such that, for every $\fp \in \mathcal{S}$, there is a prime $\fp'$ lying over $\fp$ in $k'/k$, with residue degree $f_{\fp' | \fp}=1$, and such that the {\textit{Frobenius}} ${\rm{Frob}}_{\fp'}(E'_{t_i} / k')$ lies in $C$. By the Chebotarev density theorem, the natural density of the set $\mathcal{S'}$ of all primes $\fp'$ of $k'$ such that ${\rm{Frob}}_{\fp'} (E'_{t_i} / k')$ lies in $C$ equals $|C| \cdot |I_{t_i}| / |D_{t_i}|$. This remains true for the set $\mathcal{S}''=\{\fp'\in \mathcal{S}' : f_{\fp' | \fp' \cap k}=1\}$, since the set of residue degree $1$ primes $\fp'$ of $k'$ is of natural density $1$,
 by the prime number theorem for number fields. 
 In particular, the set of all primes $\fp$ of $k$ which are contained in a prime $\fp' \in \mathcal{S}''$ is infinite, completing the proof.
\end{proof}

\section{Application to $G$-crossed products and admissibility} \label{sec:admiss}

This section is devoted to our application to $G$-crossed products over number fields. We state and prove our admissibility criterion, Theorem \ref{admiss_crit}, in \S\ref{sec:admiss_crit}, and apply it to obtain explicit families of examples in \S\ref{sec:admiss_examples}. 
For this section, let $k$ be a number field, $R$ its ring of integers, $T$ an indeterminate over $k$, and $G$ a finite group.

\subsection{Background} \label{background}
Recall that $G$ is called {\textit{$k$-admissible}} if there exists a $G$-crossed product division algebra with center $k$.
A $G$-extension $L/k$ such that $L$ is a maximal subfield of a $G$-crossed product division algebra is called {\textit{$k$-adequate.}}
To prove Theorem \ref{admiss_crit}, we need Schacher's admissibility criterion  \cite[\S2]{Sch68} over number fields:

\begin{theorem} \label{schacher}
Let $L/k$ be a $G$-extension. Then, $L$ is $k$-adequate if and only if, for each prime number $p$ dividing $|G|$, there exist two distinct prime ideals $\fp_1$ and $\fp_2$ of $R$ such that the decomposition group of $L/k$ at $\fp_i$ contains a $p$-Sylow subgroup of $G$ for $i =1,2$.
\end{theorem}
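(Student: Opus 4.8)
The plan is to translate $k$-adequacy into the language of Brauer groups and then apply the fundamental exact sequence of class field theory over the number field $k$. First I would record the standard dictionary: a $G$-extension $L/k$ is $k$-adequate if and only if there is a class $\alpha \in \Br(L/k) = H^2(G, L^\times)$ whose index equals $[L:k] = |G|$. Indeed, if $D$ is a division algebra with center $k$ and maximal subfield $L$, the double centralizer theorem gives $\deg D = [L:k]$ and realizes $D$ as the crossed product $(L/k, G, f)$, so $[D] \in \Br(L/k)$ and $\mathrm{ind}([D]) = \deg D = |G|$; conversely, any $\alpha \in \Br(L/k)$ with $\mathrm{ind}(\alpha) = |G|$ is represented by a division algebra $D$ of degree $|G|$ split by $L$, and since $[L:k] = |G| = \deg D$ the field $L$ embeds in $D$ as a maximal subfield.

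Next I would bring in the three local--global facts. Writing $\mathrm{inv}_v$ for the local invariant at a place $v$ of $k$, the Albert--Brauer--Hasse--Noether sequence
\[
0 \to \Br(k) \xrightarrow{(\mathrm{inv}_v)_v} \bigoplus_v \mathbb{Q}/\mathbb{Z} \xrightarrow{\sum_v} \mathbb{Q}/\mathbb{Z} \to 0
\]
identifies $\Br(k)$ with the tuples of local invariants summing to zero; for any $\alpha$ one has $\mathrm{ind}(\alpha) = \mathrm{lcm}_v \, \mathrm{ord}(\mathrm{inv}_v(\alpha))$, the order taken in $\mathbb{Q}/\mathbb{Z}$. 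Finally, as $L_w/k_v$ is Galois with group $D_v$ (so $[L_w:k_v] = |D_v|$, as recalled in \S\ref{sec:basics_ramification}), the field $L$ splits $\alpha$ locally at $v$ exactly when $\mathrm{ord}(\mathrm{inv}_v(\alpha))$ divides $|D_v|$, so that
\[
\alpha \in \Br(L/k) \iff \mathrm{inv}_v(\alpha) \in \tfrac{1}{|D_v|}\mathbb{Z}/\mathbb{Z} \text{ for every } v,
\]
where $D_v$ denotes the decomposition group of $L/k$ at $v$. Recall that $D_v$ contains a $p$-Sylow subgroup of $G$ iff $p^{a_p} \mid |D_v|$, where $p^{a_p} \,\|\, |G|$.

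For necessity, suppose $\alpha \in \Br(L/k)$ has index $|G|$ and fix $p \mid |G|$. Since $p^{a_p} \mid \mathrm{lcm}_v \mathrm{ord}(\mathrm{inv}_v(\alpha))$, some place $v_1$ has the $p$-part of $\mathrm{inv}_{v_1}(\alpha)$ of exact denominator $p^{a_p}$, whence $p^{a_p} \mid |D_{v_1}|$. Reading $\sum_v \mathrm{inv}_v(\alpha) = 0$ in the $p$-primary component $\mathbb{Q}_p/\mathbb{Z}_p$ shows that the top $p$-adic digit cannot cancel unless a second place $v_2 \ne v_1$ also has $p$-part of denominator $p^{a_p}$, giving $p^{a_p} \mid |D_{v_2}|$. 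For odd $p$ the archimedean invariants vanish $p$-adically, so $v_1, v_2$ are finite; the same holds when $p = 2$ and $a_2 \ge 2$, since real places contribute invariants of order at most $2 < 2^{a_2}$. The only delicate case is $p = 2$ with $a_2 = 1$: here a Sylow $2$-subgroup is generated by an involution, and I would instead argue directly by Chebotarev that the finite primes $\fp$ with $\mathrm{Frob}_{\fp}$ of even order (equivalently $2 \mid |D_{\fp}|$) have positive density, producing the two required finite primes.

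For sufficiency, suppose that for each $p \mid |G|$ we are given two distinct finite primes $\fp_1 \ne \fp_2$ with $p^{a_p} \mid |D_{\fp_i}|$; taking these primes pairwise distinct as $p$ varies, I would prescribe $\mathrm{inv}_{\fp_1} = 1/p^{a_p}$, $\mathrm{inv}_{\fp_2} = -1/p^{a_p}$, and zero at all other places. These invariants sum to zero, so the ABHN sequence yields a class $\alpha \in \Br(k)$ realizing them; since each prescribed order $p^{a_p}$ divides the corresponding $|D_v|$, the criterion above gives $\alpha \in \Br(L/k)$, while $\mathrm{ind}(\alpha) = \mathrm{lcm}_p p^{a_p} = |G|$. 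By the dictionary of the first paragraph, the division algebra representing $\alpha$ has $L$ as a maximal subfield, so $L$ is $k$-adequate. The main obstacle throughout is the bookkeeping at $p = 2$: one must guarantee that the two places in the necessity direction lie among the \emph{finite} primes of $R$, which is precisely where the case distinction $a_2 = 1$ versus $a_2 \ge 2$ and the Chebotarev input are needed; the remaining class field theory inputs (the ABHN sequence and $\mathrm{ind} = \mathrm{lcm}$ of local orders) are standard.
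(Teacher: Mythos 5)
The paper gives no proof of this statement: it is quoted verbatim from Schacher's work (\cite[\S 2]{Sch68}) and used as a black box, so there is nothing internal to compare against. Your argument is, in substance, a correct reconstruction of Schacher's original proof: the dictionary between adequacy and classes in $\Br(L/k)$ of index $[L:k]$, the Albert--Brauer--Hasse--Noether sequence together with $\mathrm{ind}=\mathrm{lcm}$ of local orders, and the splitting criterion $\mathrm{ord}(\mathrm{inv}_v)\mid |D_v|$ are exactly the right ingredients, and you correctly identify the one genuinely delicate point, namely that for $p=2$ with $2\,\Vert\,|G|$ the two places produced by the invariant-cancellation argument may be real, which you repair by observing that in that case Chebotarev unconditionally supplies infinitely many finite primes whose decomposition group has even order. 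Two small remarks. First, in the sufficiency direction the phrase ``taking these primes pairwise distinct as $p$ varies'' is not something the hypothesis entitles you to, since it only provides two primes per $p$; but no distinctness is needed: if the same $\fp$ serves several primes $p$, the prescribed invariants at $\fp$ have pairwise coprime orders, so their sum has order $\prod p^{a_p}$ over the relevant $p$, which still divides $|D_\fp|$, and the argument goes through unchanged. Second, in the necessity direction it is worth stating explicitly that a subgroup of $G$ whose order is divisible by $p^{a_p}$ (the full $p$-power in $|G|$) contains a $p$-Sylow subgroup of $G$, which is the step converting $p^{a_p}\mid |D_{v_i}|$ into the conclusion of the theorem. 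With those points made explicit, the proof is complete and correct.
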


\subsection{A new general admissibility criterion} \label{sec:admiss_crit}

\begin{theorem} \label{admiss_crit}
Assume that $G$ has a $k$-regular realization $E/k(T)$ such that, for every prime number $p$ such that $G$ has a non-cyclic $p$-Sylow subgroup, the following holds:

\vspace{1mm}

\noindent
{\rm{(H)}} for some branch point $t_i$ of $E/k(T)$, the decomposition group $D_{t_i}$ contains a $p$-Sylow subgroup $P$ of $G$ such that $PI_{t_i}/I_{t_i}$ is cyclic.

\vspace{1mm}

\noindent
Then, there exist infinitely many pairwise linearly disjoint $k$-adequate $G$-extensions of $k$, arising as specializations of $E/k(T)$. In particular, $G$ is $k$-admissible.
\end{theorem}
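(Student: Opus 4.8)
The plan is to combine the re-occurrence result Theorem \ref{chebotarev} with Schacher's criterion (Theorem \ref{schacher}). The latter reduces $k$-adequacy of a specialization $E_{t_0}/k$ to a purely local statement: for each prime $p$ dividing $|G|$, one must exhibit \emph{two} distinct primes of $k$ at which the decomposition group of $E_{t_0}/k$ contains a $p$-Sylow subgroup of $G$. So the whole task is to produce a single specialization point $t_0$ that simultaneously arranges, for every relevant $p$, two such primes.

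First I would dispose of the primes $p$ for which a $p$-Sylow subgroup $P$ of $G$ is cyclic. For these, the Specialization Inertia Theorem together with the Chebotarev-type argument behind Theorem \ref{chebotarev} (or even classical results on Frobenius elements in specializations) lets one produce, at infinitely many primes $\fp$ of $k$ with $E_{t_0}/k$ unramified at $\fp$, a decomposition group containing a conjugate of $P$: one only needs the Frobenius to generate a cyclic group containing $P$, which is possible since $G$, being a $k$-regular Galois group, has elements in every conjugacy class realized at primes of $k$ via Chebotarev on $E_{t_0}/k$. The nontrivial primes are exactly those with a \emph{non-cyclic} $p$-Sylow subgroup, and this is precisely what hypothesis (H) addresses.

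For each prime $p$ with non-cyclic $p$-Sylow subgroup, hypothesis (H) supplies a branch point $t_i$ whose decomposition group $D_{t_i}$ contains a $p$-Sylow subgroup $P$ with $PI_{t_i}/I_{t_i}$ cyclic. The key point is that a $p$-Sylow subgroup of $\langle I_{t_i},\tau\rangle$ can be made to equal $P$ for a suitable choice of $\tau\in D_{t_i}$: since $PI_{t_i}/I_{t_i}$ is cyclic, one can pick $\tau$ whose image generates $PI_{t_i}/I_{t_i}$, so that $\langle I_{t_i},\tau\rangle = PI_{t_i}$, and this group contains the Sylow subgroup $P$. Applying Theorem \ref{chebotarev} with these data (one index $j$ per bad prime $p$, and for each such $p$ two distinct primes $\fp_1,\fp_2$ drawn from the infinite sets $\mathcal{S}_j$ provided there) yields an infinite supply of primes of $k$ at which the decomposition group of the resulting specialization is $\langle I_{t_i},\tau\rangle \supseteq P$. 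Choosing two distinct primes per bad $p$ meets Schacher's two-prime requirement.

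Finally I would assemble everything into one specialization. By Theorem \ref{chebotarev} (invoked with $s$ equal to twice the number of bad primes, using its built-in reliance on Lemma \ref{PV} to handle several primes at once) there exist infinitely many $t_0\in k$ for which $E_{t_0}/k$ is a $G$-extension with the prescribed decomposition group at each of the chosen primes; the easy primes $p$ are handled by the unramified Chebotarev argument applied to the same $t_0$ (or by enlarging the list of conditions fed to Lemma \ref{PV}). Schacher's criterion then certifies that each such $E_{t_0}$ is $k$-adequate, and Lemma \ref{PV} guarantees infinitely many pairwise linearly disjoint such specializations, whence infinitely many $k$-adequate $G$-extensions and in particular $k$-admissibility of $G$. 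The main obstacle I anticipate is the bookkeeping needed to ensure the conditions for distinct bad primes $p$ are \emph{compatible}: the same point $t_0$ must work for all of them at once, and one must verify that the decomposition group one arranges at $\fp$ contains a full $p$-Sylow subgroup (not merely a $p$-subgroup), which is exactly where the cyclicity of $PI_{t_i}/I_{t_i}$ in (H) is used to force $\langle I_{t_i},\tau\rangle$ to capture all of $P$ rather than a proper subgroup.
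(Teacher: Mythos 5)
Your proposal is correct and follows essentially the same route as the paper: cyclic Sylow subgroups are handled by Chebotarev applied to a $G$-specialization, non-cyclic ones by choosing $\tau\in D_{t_i}$ whose image generates the cyclic quotient $PI_{t_i}/I_{t_i}$ so that Theorem \ref{chebotarev} yields decomposition groups equal to $PI_{t_i}\supseteq P$, and Lemma \ref{PV} glues the finitely many local conditions into a single specialization satisfying Schacher's two-prime criterion. The compatibility bookkeeping you flag is exactly what Lemma \ref{PV} is invoked for in the paper, so there is no gap.
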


\begin{proof}
Let $p$ be a prime number dividing $|G|$, and let $P$ be a $p$-Sylow subgroup of $G$. First, assume that $P$ is cyclic. Let $t_p \in k$ be such that $E_{t_p}/k$ has Galois group $G$; such a $t_p$ exists by Hilbert's irreducibility theorem. By the Chebotarev density theorem, there exist infinitely many prime ideals $\fp$ of $R$ such that $D_{t_p,\fp}$ contains $P$. Now, assume that $P$ is not cyclic. Let $t_i(p)$ be a branch point of $E/k(T)$ such that the decomposition group $D_{t_i(p)}$ contains $P$, and such that $PI_{t_i(p)}/I_{t_i(p)}$ is cyclic (condition (H)). Then, by Theorem \ref{chebotarev}, there exists an infinite set $\mathcal{S}_p$ of prime ideals of $R$ such that, for each $\fp \in \mathcal{S}_p$, there exist infinitely many $t_{p} \in k$ such that $D_{t_p,\fp}=PI_{t_i(p)}$. 
By applying Lemma \ref{PV}, we obtain $t_0\in k$ such that $E_{t_0}/k$ is a $G$-extension and, for each $p$ dividing $|G|$,  one has $D_{t_0,\fp}=PI_{t_i(p)}$ for at least two distinct primes $\fp$ of $k$, verifying Theorem \ref{schacher}. 
\end{proof}

\subsection{Examples}\label{sec:admiss_examples}

Roughly speaking, in order to apply Theorem \ref{admiss_crit}, one has to produce $k$-regular realizations of $G$ with ``sufficiently large" ramification indices and residue degrees at some branch points. Ramification indices can often be prescribed purely group-theoretically (e.g., via the {\textit{rigidity method}}). To ensure ``large" residue degrees we use Lemma \ref{branchcycle}. 
This method yields a large class of new examples.
Below, we derive the first $\qq$-admissibility results for infinite families of finite non-abelian simple groups. More precisely, Theorem \ref{thm:psl1} covers half of the groups ${\rm{PSL}}_2(\mathbb{F}_p)$ ($p$ prime), whereas Theorem \ref{thm:psl2} covers a further $1/8$, giving a total of $62.5\%$ of the groups. 

\begin{theorem} \label{thm:psl1}
Assume that $k$ does not contain $\sqrt{-1}$. Let $p$ be a prime number such that either $p \equiv 3 \pmod 8$ or $p \equiv 5 \pmod 8$. Then, the groups ${\rm{PSL}}_2(\mathbb{F}_p)$ and ${\rm{PGL}}_2(\mathbb{F}_p)$ are $k$-admissible.
\end{theorem}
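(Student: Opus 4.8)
The plan is to apply Theorem \ref{admiss_crit}, so the first task is to identify, for $G \in \{\mathrm{PSL}_2(\mathbb{F}_p), \mathrm{PGL}_2(\mathbb{F}_p)\}$, the primes $\ell$ at which $G$ has a non-cyclic Sylow subgroup, and then to verify condition (H) there. For odd $\ell \neq p$, the $\ell$-Sylow subgroup lies in a cyclic maximal torus and is cyclic, while the $p$-Sylow is cyclic of order $p$; hence only $\ell = 2$ is relevant. The congruence $p \equiv 3 \pmod 8$ or $p \equiv 5 \pmod 8$ is exactly the condition $v_2(p^2-1)=3$, under which the $2$-Sylow subgroup is the Klein four-group $V_4$ for $\mathrm{PSL}_2(\mathbb{F}_p)$ and the dihedral group $P = \langle r, s \mid r^4 = s^2 = 1,\ srs = r^{-1}\rangle$ of order $8$ for $\mathrm{PGL}_2(\mathbb{F}_p)$. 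Thus it suffices to check (H) at $\ell = 2$.

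I would first treat $\mathrm{PGL}_2(\mathbb{F}_p)$. Here $4 \mid (p-1)$ (if $p\equiv 5 \pmod 8$) or $4 \mid (p+1)$ (if $p\equiv 3 \pmod 8$), so $\mathrm{PGL}_2(\mathbb{F}_p)$ contains an element $g$ of order $4$ in a maximal torus; its conjugacy class is rational, as the only power relation to check is $g \sim g^{-1}$, which holds by the Weyl-group inversion in the torus normalizer. I would invoke the rigidity method to produce a $\mathbb{Q}$-regular realization of $\mathrm{PGL}_2(\mathbb{F}_p)$ from a rigid triple of rational classes one of which is this order-$4$ class; base change then yields a $k$-regular realization $E/k(T)$ with a $k$-rational branch point $t_i$ at which $I_{t_i} = \langle g \rangle \cong C_4$. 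Since $e_i := |I_{t_i}| = 4$, Lemma \ref{branchcycle} gives $\sqrt{-1} \in (E(t_i))_{t_i}$; as $t_i$ is $k$-rational and $\sqrt{-1} \notin k$, the residue extension $(E(t_i))_{t_i}/k$ has even degree, so $v_2(|D_{t_i}|) = v_2(e_i) + v_2\bigl([(E(t_i))_{t_i}:k]\bigr) \geq 2 + 1 = 3 = v_2(|\mathrm{PGL}_2(\mathbb{F}_p)|)$. Hence $D_{t_i}$ contains a full $2$-Sylow $P \cong D_4$; and since $\langle g \rangle = I_{t_i}$ is a normal $2$-subgroup of $D_{t_i}$, it lies in every $2$-Sylow and coincides with the unique normal $C_4$ in $P$, namely $\langle r \rangle$. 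Therefore $P \cap I_{t_i} = \langle r \rangle$ and $PI_{t_i}/I_{t_i} \cong P/\langle r \rangle \cong C_2$ is cyclic, verifying (H); Theorem \ref{admiss_crit} gives the $k$-admissibility of $\mathrm{PGL}_2(\mathbb{F}_p)$.

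For $\mathrm{PSL}_2(\mathbb{F}_p)$ I would proceed by descent, arranging the rigid triple so that exactly two of its three inertia generators lie in $\mathrm{PGL}_2(\mathbb{F}_p) \setminus \mathrm{PSL}_2(\mathbb{F}_p)$; the order-$4$ class automatically does, since $\mathrm{PSL}_2(\mathbb{F}_p)$ has no element of order $4$ (its $2$-Sylow being $V_4$). Then the index-$2$ subextension $E^{\mathrm{PSL}_2(\mathbb{F}_p)} = k(T)(\sqrt{\delta})$ is ramified over exactly two points of $\mathbb{P}^1$, hence is a rational function field $k(U)$, and $E/k(U)$ is a $k$-regular $\mathrm{PSL}_2(\mathbb{F}_p)$-realization. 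At the place over $t_i$, the decomposition and inertia groups of $E/k(U)$ are $D_{t_i} \cap \mathrm{PSL}_2(\mathbb{F}_p)$ and $\langle g \rangle \cap \mathrm{PSL}_2(\mathbb{F}_p) = \langle g^2 \rangle \cong C_2$. As $D_{t_i} \supseteq P$ and $P \cap \mathrm{PSL}_2(\mathbb{F}_p)$ is an index-$2$ subgroup of $D_4$ not containing $r$, hence a $V_4$, the decomposition group contains a full $2$-Sylow $V_4$ of $\mathrm{PSL}_2(\mathbb{F}_p)$, and $V_4/\langle g^2 \rangle \cong C_2$ is cyclic. Condition (H) again holds, and Theorem \ref{admiss_crit} gives the $k$-admissibility of $\mathrm{PSL}_2(\mathbb{F}_p)$.

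The main obstacle is the rigidity input of the second paragraph: one must exhibit a genuinely rigid triple of $\mathrm{PGL}_2(\mathbb{F}_p)$ with one class of order $4$ and exactly two classes of non-square determinant, establish its rigidity (generation and the count of generating triples up to conjugacy), and control the rationality and descent of the associated cover to $\mathbb{Q}$, so that $t_i$ is rational and the $\mathrm{PSL}$-subfield descends to a rational function field. This group-theoretic and arithmetic step is the delicate heart of the argument; by contrast, once such a realization is in hand, the computation of the decomposition group via Lemma \ref{branchcycle} and the hypothesis $\sqrt{-1} \notin k$ is essentially forced.
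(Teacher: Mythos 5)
Your strategy is exactly the paper's: reduce to condition (H) of Theorem \ref{admiss_crit} at $\ell=2$ (the odd Sylows being cyclic), take a $k$-regular $\mathrm{PGL}_2(\mathbb{F}_p)$-realization with a $k$-rational branch point whose inertia group is generated by an order-$4$ element, use Lemma \ref{branchcycle} together with $\sqrt{-1}\notin k$ to force even residue degree and hence a full $2$-Sylow inside the decomposition group, and descend to $\mathrm{PSL}_2(\mathbb{F}_p)$ via the quadratic subfield, which is a rational function field. Your group-theoretic bookkeeping ($v_2(p^2-1)=3$, the $2$-Sylows $D_4$ and $V_4$, $P\cap I_{t_i}=\langle r\rangle$, $PI_{t_i}/I_{t_i}\cong C_2$, and the $V_4$ computation after descent) is correct and in fact more explicit than the paper's.

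The one genuine gap is the step you yourself flag: you never establish the existence of the required realization, and without it the proof is incomplete. The paper closes this by citation rather than by a fresh rigidity computation: \cite[Chapter I, Corollary 8.10]{MM99} supplies a $k$-regular $\mathrm{PGL}_2(\mathbb{F}_p)$-extension with three $k$-rational branch points and inertia generators in the classes $2B$, $4A$, $pA$. Note that this automatically has the parity you need for the descent (the odd-order class $pA$ lies in $\mathrm{PSL}_2(\mathbb{F}_p)$, so $2B$ and $4A$ must both lie outside it), and the paper invokes \cite[Lemma 4.5.1]{Ser92} for the rationality of the fixed field of $\mathrm{PSL}_2(\mathbb{F}_p)$ --- your assertion that a degree-$2$ cover of $\mathbb{P}^1$ branched at two points is a rational function field needs the extra remark that it carries a $k$-rational point, e.g.\ the unique point above the $k$-rational branch point $t_2$, since a genus-$0$ curve need not be rational. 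With that citation supplied, your argument coincides with the paper's proof.
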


\begin{proof}
Recall that ${\rm{PGL}}_2(\mathbb{F}_p)$ has order $(p-1)p(p+1)$. Then, by our assumption on $p$, the $2$-Sylow subgroups of ${\rm{PGL}}_2(\mathbb{F}_p)$ (resp., of ${\rm{PSL}}_2(\mathbb{F}_p)$) have order $8$ (resp., 4). By \cite[Chapter I, Corollary 8.10]{MM99}, there exists a $k$-regular ${\rm{PGL}}_2(\mathbb{F}_p)$-extension $E/k(T)$ with $k$-rational branch points $t_1$, $t_2$, and $t_3$, and such that the corresponding inertia groups $I_{t_1}$, $I_{t_2}$, and $I_{t_3}$ are generated by elements in the conjugacy classes  $2B$, $4A$, and $pA$ of ${\rm{PGL}}_2(\mathbb{F}_p)$, respectively (according to the ATLAS notation \cite{ATL}). Consider the branch point $t_2$ of $E/k(T)$ with ramification index 4. By Lemma \ref{branchcycle}, and as $k$ does not contain $\sqrt{-1}$, the residue degree at this branch point is divisible by 2. It then remains to combine Theorem \ref{admiss_crit} and the classical fact that the Sylow subgroups of ${\rm{PGL}}_2(\mathbb{F}_p)$ of odd order are cyclic to get that ${\rm{PGL}}_2(\mathbb{F}_p)$ is $k$-admissible.

Furthermore, the fixed field of ${\rm{PSL}}_2(\mathbb{F}_p)$ in $E$ is a rational function field, say $k(S)$, by, e.g., \cite[Lemma 4.5.1]{Ser92}, that has degree 2 over $k(T)$. Denote by $s_2$ the unique point lying over $t_2$ in $k(S)/k(T)$. Then, $s_2$ is a branch point of $E/k(S)$ with ramification index 2, and the residue degree at $s_2$ in $E/k(S)$ is divisible by 2. As above, we conclude that ${\rm{PSL}}_2(\mathbb{F}_p)$ is $k$-admissible.
\end{proof}

\begin{theorem} \label{thm:psl2}
 Let $p$ be a prime number such that the following two conditions hold:
\begin{itemize}
\item[(1)] either $p \equiv 7 \pmod {16}$ or $p \equiv 9 \pmod {16}$,
\item[(2)] either $p \equiv 2 \pmod {5}$ or $p \equiv 3 \pmod {5}$.
\end{itemize}
Assume that $k$ contains neither $\sqrt{-1}$ nor $\sqrt{p^\star}$, where $p^\star = (-1)^{(p-1)/2}p$. Then, the group ${\rm{PSL}}_2(\mathbb{F}_p)$ is $k$-admissible.
\end{theorem}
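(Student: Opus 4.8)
The plan is to deduce this from the admissibility criterion of Theorem \ref{admiss_crit}. Since the odd-order Sylow subgroups of ${\rm PSL}_2(\mathbb{F}_p)$ are cyclic, the only prime for which condition (H) must be checked is $\ell=2$; and under condition (1) a direct computation of the $2$-adic valuation of $|{\rm PSL}_2(\mathbb{F}_p)| = p(p^2-1)/2$ shows that the $2$-Sylow subgroup $P$ has order $8$ and is dihedral, $P\cong D_4$. The unique cyclic subgroup of order $4$ in $P$ is its rotation subgroup, which is normal with quotient $P/C_4\cong C_2$. Hence, to verify (H) it suffices to produce a branch point $t_i$ whose inertia group $I_{t_i}$ is cyclic of order $4$ and whose decomposition group $D_{t_i}$ has order divisible by $8$: then $I_{t_i}$ is a normal $2$-subgroup contained in every $2$-Sylow of $D_{t_i}$, so $P\subseteq D_{t_i}$, $P\cap I_{t_i}=I_{t_i}$, and $PI_{t_i}/I_{t_i}=P/C_4\cong C_2$ is cyclic. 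Note that an inertia group of order $4$ is forced to be \emph{cyclic}, since the geometric inertia at any branch point is cyclic, so no other isomorphism type is available.

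The key input is therefore a $k$-regular realization $E/k(T)$ of ${\rm PSL}_2(\mathbb{F}_p)$ possessing a $k$-rational branch point $t_i$ with cyclic inertia group of order exactly $4$; this is where conditions (1) and (2) on $p$ enter. Condition (1) guarantees that ${\rm PSL}_2(\mathbb{F}_p)$ contains elements of order $4$, lying in the maximal torus of order $(p\pm1)/2$ whose $2$-part is exactly $4$. Condition (2) is equivalent to $5$ being a quadratic non-residue modulo $p$, and I expect it to be the arithmetic condition guaranteeing the existence of the required realization with a rational branch point of this inertia type. The hypotheses $\sqrt{-1}\notin k$ and $\sqrt{p^\star}\notin k$ are what make the relevant descent to $k$ effective, so that the extension is genuinely $k$-regular; such a realization may well arise, as in the proof of Theorem \ref{thm:psl1}, as the ${\rm PSL}_2$-subextension of a ${\rm PGL}_2(\mathbb{F}_p)$-realization, where an inertia generator of order $8$ restricts to one of order $4$ on $E/k(S)$.

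Granting such a realization, the verification of (H) for $\ell=2$ proceeds via the branch cycle lemma (Lemma \ref{branchcycle}): since $I_{t_i}$ has order $4$, the specialization $E_{t_i}$ contains a primitive $4$-th root of unity, that is, $\sqrt{-1}$. As $\sqrt{-1}\notin k$, the residue degree $[E_{t_i}:k]=|D_{t_i}/I_{t_i}|$ is even, so the $2$-part of $|D_{t_i}|=4\,[E_{t_i}:k]$ is at least $8$. Since $|D_{t_i}|$ divides $|G|$, whose $2$-part is exactly $8$, the group $D_{t_i}$ contains a full $2$-Sylow $P\cong D_4$, and the group-theoretic reduction above shows (H) holds. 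Invoking Theorem \ref{admiss_crit} then yields infinitely many $k$-adequate ${\rm PSL}_2(\mathbb{F}_p)$-specializations of $E/k(T)$, whence ${\rm PSL}_2(\mathbb{F}_p)$ is $k$-admissible.

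The main obstacle I anticipate is the construction of the realization with a rational $C_4$-inertia branch point: the group-theoretic reduction and the branch-cycle argument are routine once this is in hand, but locating a suitable rigid (or otherwise explicitly realizable) ramification datum in ${\rm PSL}_2(\mathbb{F}_p)$, and checking that it descends to a $k$-regular extension under precisely conditions (1), (2) and the two non-square hypotheses, is the delicate point. I would search for this realization among quadratic-twist and modular-curve constructions of Shih type, in which non-residue conditions modulo small primes (here $5$, through condition (2)) govern the descent, and would then verify $k$-rationality of the order-$4$ branch point together with the exact order of its inertia by analyzing the associated branch cycle description.
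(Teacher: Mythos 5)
Your overall strategy coincides with the paper's: reduce to condition (H) of Theorem \ref{admiss_crit} for the prime $2$ (the odd Sylow subgroups being cyclic), find a branch point with inertia group of order $4$, and use Lemma \ref{branchcycle} together with $\sqrt{-1}\notin k(t_i)$ to force the residue degree at $t_i$ to be even, hence the decomposition group to have order divisible by $8$; your group-theoretic reduction (a normal cyclic subgroup of order $4$ inside a $2$-Sylow of order $8$ yields a cyclic quotient $PI_{t_i}/I_{t_i}$) is correct and is exactly what makes (H) hold. The genuine gap is the step you yourself flag as the ``main obstacle'': you never produce the required realization. The paper obtains it from Shih's theorem, \cite[Chapter I, Theorem~7.10]{MM99}, which under condition (2) (equivalently, $5$ a quadratic non-residue modulo $p$) yields a $\Qq$-regular ${\rm{PSL}}_2(\mathbb{F}_p)$-extension with four branch points and inertia classes $4A$, $4A$, $pA$, $pB$, and whose proof moreover locates the two branch points of ramification index $4$ explicitly at $v_i=\pm 2\sqrt{-p^\star}$. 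Without this input (or an equivalent one) the proof is not complete.

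Two of your surrounding claims would also need repair. First, you insist on a \emph{$k$-rational} branch point with $C_4$-inertia; condition (H) does not require rationality of the branch point, and the realization actually used does not provide it: its order-$4$ branch points generate $k(\sqrt{-p^\star})$, in general a quadratic extension of $k$. Second, you guess that the hypotheses $\sqrt{-1}\notin k$ and $\sqrt{p^\star}\notin k$ serve to make the descent $k$-regular; in fact Shih's realization is already $\Qq$-regular with no such conditions, and the two hypotheses are used \emph{jointly} to guarantee $\sqrt{-1}\notin k(v_i)=k(\sqrt{-p^\star})$ --- note that if $\sqrt{p^\star}\in k$ then $k(\sqrt{-p^\star})=k(\sqrt{-1})$ and the branch-cycle argument collapses. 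With the correct citation and these two points fixed, your argument becomes the paper's proof.
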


\begin{proof}
By condition (1), the $2$-Sylow subgroups of ${\rm{PSL}}_2(\mathbb{F}_p)$ have order $8$. Moreover, $5$ is not a square modulo $p$ by condition (2). By \cite[Chapter I, Theorem~7.10]{MM99}, one then has a $\Qq$-regular ${\rm{PSL}}_2(\mathbb{F}_p)$-extension of $\Qq(T)$ with 4 branch points, and the corresponding inertia groups are generated by elements in the conjugacy classes $4A$, $4A$, $pA$, and $pB$ of ${\rm{PSL}}_2(\mathbb{F}_p)$ (according to the ATLAS notation). The proof of that theorem also gives the position of the branch points of this $\Qq$-regular extension. Namely, in the notation of that proof, the $\Qq$-regular realization of ${\rm{PSL}}_2(\mathbb{F}_p)$ is $E/\qq(v)$, and the branch points with ramification index 4 are the roots of the equation $u^2-6u+25=0$, where $u$ is related to $v$ via $v:=\sqrt{p^\star}(u+5)/({u-5})$. From the above data, one computes that the fields $\qq(v_i)$, where $v_i$ is a branch point of ramification index 4, do not contain $\sqrt{-1}$. In fact, one has $v_i=\pm 2\sqrt{-p^\star}$. Clearly, all of the above remains true for the extension $E \cdot k/k(v)$ instead of $E/\qq(v)$. In particular, as $k$ contains neither $\sqrt{-1}$ nor $\sqrt{p^\star}$, one has $\sqrt{-1} \not \in k(v_i)$. By Lemma \ref{branchcycle}, the residue degree at a branch point of $E \cdot k/k(v)$ with ramification index 4 is divisible by 2. Hence, the decomposition group at such a branch point is of order divisible by $8$. It then remains to apply Theorem \ref{admiss_crit} to conclude.
\end{proof}

\section{Application to the Hilbert-Grunwald property} \label{sec:grunwald}

The present section is devoted to our application to the Hilbert-Grunwald property. 
We state and prove Theorem \ref{thm:neggrunwald}, which is a more general version of Theorem \ref{neg_grunwald_boohoo}, and show that it applies to any finite non-abelian simple group; see Corollary \ref{simple}. 
For this section, let $k$ be a number field, $R$ its ring of integers, $T$ an indeterminate over $k$, and $G$ a finite group.
Given finitely many $k$-regular $G$-extensions $E_1/k(T)$, $\dots$, $E_s/k(T)$, we are interested in the following question, for which the partial positive answer provided by Theorem \ref{thm fields} is a natural motivation:

\begin{question} \label{question}
Does there exist a finite set $\mathcal{S}_{\rm{exc}}$ of non-zero prime ideals of $R$ such that every Grunwald problem $(G, (L^{(\fp)}/k_\fp)_{\fp\in \mathcal{S}})$, with $\mathcal{S}$ disjoint from $\mathcal{S}_{\rm{exc}}$, has a solution inside the union of the sets of specializations of $E_1/k(T)$, $\dots$, $E_s/k(T)$?
\end{question}

It follows from earlier works by the first two authors \cite{Leg16a, Koe17a} that, for each non-trivial finite group $G$ for which there exists a $k$-regular $G$-extension of $k(T)$, there always is one of these realizations for which the answer to Question \ref{question} is negative.

In Theorem \ref{thm:neggrunwald} below, we show that the answer to Question \ref{question} is in fact {\textit{always}} negative for many finite groups:

\begin{theorem} \label{thm:neggrunwald}
Assume that $G$ has a non-cyclic abelian subgroup. Then, the answer to Question \ref{question} is negative for all finite sets of $k$-regular $G$-extensions of $k(T)$.
\end{theorem}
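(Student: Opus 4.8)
The plan is to exploit the local rigidity established in Theorem \ref{thm main}: I would first show that, at suitable primes, the decomposition group of every specialization is forced to be cyclic, and then violate this by prescribing a non-cyclic abelian local extension. Concretely, by Theorem \ref{thm main} together with the Specialization Inertia Theorem, there is a finite set of primes depending only on $E_1/k(T),\dots,E_s/k(T)$, outside of which the completion $(E_{j,t_0})_\fp$ of a specialization is ramified only if $t_0$ meets some branch point $t_i$ modulo $\fp$; and in that case the decomposition group is conjugate in $G$ to a subgroup $U\le D_{t_i}$ with $\varphi_i(U)=D_{t_i,\fp'}$, where $\varphi_i:D_{t_i}\to D_{t_i}/I_{t_i}$ is the natural projection and $D_{t_i,\fp'}$ is the decomposition group of the residue extension $(E(t_i))_{t_i}/k(t_i)$ at the prime $\fp'$ over $\fp$. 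The key observation is that if $\fp'$ splits completely in $(E(t_i))_{t_i}/k(t_i)$, so that $D_{t_i,\fp'}=1$, then $U\le\ker\varphi_i=I_{t_i}$ is cyclic; and unramified completions have cyclic decomposition group as well. Hence at any prime where all the residue extensions $(E(t_i))_{t_i}$ split completely, the decomposition group at $\fp$ of every specialization of every $E_j$ is cyclic.

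I would then build the bad Grunwald problem. Since $G$ has a non-cyclic abelian subgroup, it contains a copy of $(\mathbb{Z}/\ell\mathbb{Z})^2$ for some prime $\ell$. Let $M^{\ast}/k$ be the finite Galois closure over $k$ of the compositum of the cyclotomic field $k(\mu_\ell)$ with all the residue extensions $(E(t_i))_{t_i}$, the compositum ranging over every branch point of every $E_j$. Given an arbitrary finite set $\mathcal{S}_{\rm{exc}}$, the Chebotarev density theorem supplies infinitely many primes $\fp\notin\mathcal{S}_{\rm{exc}}$ (and outside the finite bad set above) that split completely in $M^{\ast}/k$. For such a $\fp$ two things hold at once: first, $\overline{k}_\fp$ contains the $\ell$-th roots of unity and has residue characteristic different from $\ell$, so there exists a tamely ramified Galois extension $L^{(\fp)}/k_\fp$ with $\Gal(L^{(\fp)}/k_\fp)\cong(\mathbb{Z}/\ell\mathbb{Z})^2\le G$ (take the compositum of a ramified and an unramified cyclic extension of degree $\ell$); second, by the previous paragraph every specialization of every $E_j$ has cyclic decomposition group at $\fp$, hence $(E_{j,t_0})_\fp\not\cong L^{(\fp)}$ for all $j$ and $t_0$. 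Thus the Grunwald problem $(G,(L^{(\fp)}/k_\fp))$, supported at the single prime $\fp$, admits no solution inside the union of the sets of specializations of $E_1/k(T),\dots,E_s/k(T)$. As $\mathcal{S}_{\rm{exc}}$ was an arbitrary finite set, this shows the answer to Question \ref{question} is negative, which is Theorem \ref{thm:neggrunwald}.

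The main point to get right is the simultaneous compatibility of the two requirements at a single prime: one needs $\fp$ at which a non-cyclic abelian local extension with Galois group in $G$ exists \emph{and} at which all the residue extensions split, and these must be realized by infinitely many primes so as to avoid the arbitrary finite $\mathcal{S}_{\rm{exc}}$. This forces the argument to stay in the tame regime---non-cyclic inertia would only be available at the finitely many primes of residue characteristic $\ell$---so the obstruction must be located in the decomposition group rather than the inertia group. The compatibility is precisely what the choice of $M^{\ast}$ arranges: splitting completely in $k(\mu_\ell)$ both guarantees the $\ell$-th roots of unity in $\overline{k}_\fp$ (whence residue characteristic $\neq\ell$ and the local realizability of $(\mathbb{Z}/\ell\mathbb{Z})^2$) and is compatible with complete splitting in the residue extensions, since all these conditions amount to a single Frobenius-triviality condition over $M^{\ast}$. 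Verifying that $L^{(\fp)}$ is a legitimate instance of Question \ref{question}---that its Galois group genuinely embeds in $G$ as a possible decomposition group---is then routine.
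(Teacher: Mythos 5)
Your proposal is correct and follows essentially the same route as the paper: the paper isolates your first paragraph as Proposition \ref{inf_many_cyclic} (cyclic decomposition groups at primes totally split in the compositum $F$ of the residue extensions at branch points, via Theorem \ref{thm main}(1) forcing $U\le I_{t_i}$), and then, exactly as you do, picks a prime totally split in $F(\zeta_q)/k$ outside the given exceptional set and realizes $\Zz/q\Zz\times\Zz/q\Zz$ over $k_\fp$ as the compositum of an unramified and a totally ramified cyclic degree-$q$ extension. The only cosmetic difference is that the paper states the cyclicity step as a standalone proposition (reused later for parametric sets), whereas you inline it.
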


The proof of Theorem \ref{thm:neggrunwald} requires the following auxiliary result, which strengthens the special case of Theorem \ref{chebotarev} where $\tau_{i(j)}$ is chosen to be the identity element of $D_{t_{i(j)}}$. 

\begin{proposition} \label{inf_many_cyclic}
Let $E/k(T)$ be a $k$-regular $G$-extension, and $F$ the compositum of the residue fields $(E(t_1))_{t_1}, \dots, (E(t_r))_{t_r}$ of $E/k(T)$ at the branch points $t_1,\dots,t_r$. Moreover, let $\fp$ be a prime ideal of $R$ that is totally split in $F/k$ (avoiding a finite set of primes depending only on $E/k(T)$). Then, the decomposition group of $E_{t_0}/k$ at $\fp$ is cyclic for every $t_0 \in \mathbb{P}^1(k)$.
\end{proposition}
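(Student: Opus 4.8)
The plan is to distinguish according to whether or not $\fp$ ramifies in $E_{t_0}/k$. In the unramified case the decomposition group $D_{t_0,\fp}$ is canonically isomorphic to the Galois group of the residue extension $\overline{(E_{t_0})}_\fp/\overline{k}_\fp$, and since $k$ is a number field this residue extension is an extension of a finite field, whose Galois group is automatically cyclic. Thus the hypothesis of total splitting plays no role here, and the real content of the statement lies entirely in the ramified case.

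So suppose $\fp$ ramifies in $E_{t_0}/k$, and let the finite exceptional set consist of $\mathcal{S}_{\rm{bad}}$ from the Specialization Inertia Theorem together with the union over the branch points $t_1,\dots,t_r$ of the sets $\mathcal{S}_{\rm{exc}}(E/k(T))$ furnished by Theorem \ref{thm main}. For $\fp$ outside this set, the Specialization Inertia Theorem guarantees that $t_0$ meets a unique branch point $t_i$ modulo $\fp$, and Theorem \ref{thm main} then gives that $D_{t_0,\fp}$ is conjugate in $G$ to a subgroup $U$ of $D_{t_i}$ with $\varphi(U)=D_{t_i,\fp'}$, where $\fp'=\fp'(t_0,t_i,\fp)$ is the prime of $k(t_i)$ from Lemma \ref{lemma meeting} and satisfies $f_{\fp'|\fp}=1$.

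The key step is to show that $D_{t_i,\fp'}$ is trivial. Here I would exploit the hypothesis that $\fp$ is totally split in $F/k$, together with the inclusions $k(t_i)\subseteq (E(t_i))_{t_i}\subseteq F$. Total splitting descends to any intermediate field, so $\fp$ is totally split in $(E(t_i))_{t_i}/k$; multiplying ramification indices and residue degrees along the tower $k\subseteq k(t_i)\subseteq (E(t_i))_{t_i}$ and using $f_{\fp'|\fp}=1$ (hence also $e_{\fp'|\fp}=1$, as $\fp$ is unramified in $k(t_i)$), one finds that every prime of $(E(t_i))_{t_i}$ lying over $\fp'$ is unramified of residue degree $1$ over $\fp'$. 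As $(E(t_i))_{t_i}/k(t_i)$ is Galois with group $D_{t_i}/I_{t_i}$, this is exactly the statement that its decomposition group $D_{t_i,\fp'}$ at $\fp'$ is trivial.

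It then follows that $\varphi(U)=\{1\}$, so $U\subseteq \ker\varphi = I_{t_i}$; note that no coprimality between the intersection multiplicity and $e_i$ is needed, since the triviality of $\varphi(U)$ already forces $U$ into $I_{t_i}$. Finally, because $k$ has characteristic zero the cover $E\cdot\widetilde{k}/\widetilde{k}(T)$ is tamely ramified, so the branch-point inertia group $I_{t_i}$ is cyclic; hence $U$, being one of its subgroups, is cyclic, and therefore so is the conjugate $D_{t_0,\fp}$. I expect the only delicate point to be the two-step descent of total splitting from $F/k$ down to $(E(t_i))_{t_i}/k(t_i)$; everything else is a direct application of the results already established.
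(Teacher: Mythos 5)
Your proof is correct and follows essentially the same route as the paper's: unramified primes give cyclic (Frobenius-generated) decomposition groups, and in the case where $t_0$ meets a branch point $t_i$ the total splitting of $\fp$ in $F/k$ forces $D_{t_i,\fp'}$ to be trivial, so that part (1) of Theorem \ref{thm main} places $D_{t_0,\fp}$ inside the cyclic group $I_{t_i}$ --- you merely spell out the two-step descent of total splitting that the paper leaves implicit. The only point worth adding is that the Specialization Inertia Theorem requires $t_0\notin\{t_1,\dots,t_r\}$; but if $t_0$ is a ($k$-rational) branch point then $E_{t_0}\subseteq F$, so the total-splitting hypothesis already places you in your unramified case and nothing is lost.
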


\begin{proof}
Let $t_0$ be in $\mathbb{P}^1(k)$. First, assume that $t_0$ is in $\{t_1,\dots,t_r\}$. Then, by the definition of $F$, the decomposition group $D_{t_0,\fp}$ of $E_{t_0}/k$ at $\fp$ is trivial. Now, assume that $t_0$ is not in $\{t_1,\dots,t_r\}$. If $t_0$ does not meet any branch point of $E/k(T)$ modulo $\fp$, then, by the Specialization Inertia Theorem (\S\ref{sec:basics_inertia}), $\fp$ is unramified in $E_{t_0}/k$ (up to excluding finitely many primes of $k$). Hence, $D_{t_0,\fp}$ is cyclic. So we may assume that $t_0$ meets some branch point $t_i$ modulo $\fp$. As $\fp$ is totally split in $F/k$, part (1) of Theorem \ref{thm main} shows that $D_{t_0,\fp}$ is contained in the inertia group $I_{t_i}$ of $E(t_i)/k(t_i)(T)$ at $(T-t_i) \, k(t_i)[T-t_i]$ (up to excluding finitely many primes of $k$). As $I_{t_i}$ is cyclic, we are done.
\end{proof}

\begin{proof}[Proof of Theorem \ref{thm:neggrunwald}]
Given a positive integer $s$, let $\{E_1/k(T), \dots, E_s/k(T)$\} be a finite set of $k$-regular $G$-extensions. Let $F$ be the compositum of the residue extensions at branch points of $E_1/k(T), \dots, E_s/k(T)$. Then, by Proposition \ref{inf_many_cyclic}, for every non-zero prime ideal $\fp$ of $R$ that is totally split in $F/k$ (outside some finite set $\mathcal{S}_{{\rm{exc}}}$ depending only on $E_1/k(T),\dots,E_s/k(T)$), the completion at $\fp$ of every specialization of any of the extensions $E_1/k(T), \dots, E_s/k(T)$ has cyclic Galois group. Let $H$ be a non-cyclic abelian subgroup of $G$. Without loss of generality, we may assume $H=\Zz/q\Zz \times \Zz/q\Zz$ for some prime number $q$. Let $\zeta_q$ be a primitive $q$-th root of unity, and let $\fp$ be a non-zero prime ideal of $R$, not in $\mathcal{S}_{{\rm{exc}}}$, that is totally split in $F(\zeta_q)/k$. As $\fp$ is totally split in $k(\zeta_q)/k$, there exists a finite Galois extension $L^{(\fp)}/k_\fp$ with Galois group $H$ \footnote{Indeed, one can take $L^{(\fp)}$ to be the compositum of the fields $L_1^{(\fp)}$ and $L_2^{(\fp)}$, where $L_1^{(\fp)}/k_\fp$ is the unique degree $q$ unramified extension of $k_\fp$, and $L_2^{(\fp)}/k_\fp$ is a finite Galois extension with Galois group $\Zz/q\Zz$ that is totally ramified (such an extension exists; see, e.g., \cite[Chapter IV]{Ser79}).}. In particular, $L^{(\fp)}/k_\fp$ cannot occur as the completion at $\fp$ of a specialization of any of the extensions $E_1/k(T), \dots, E_s/k(T)$, completing the proof.
\end{proof}

\begin{remark}
Our method cannot provide more examples of finite groups such that the answer to Question \ref{question} is negative. Indeed, it requires the following assumption on $G$:

\vspace{1mm}

\noindent
($*$) {\textit{$G$ has a non-cyclic subgroup $H$ such that, for every number field $F$ containing $k$, there exist infinitely many non-zero prime ideals $\fp$ of $R$ such that $\fp$ is totally split in $F/k$, and such that $H$ occurs as a Galois group over $k_\fp$.}}

\vspace{1mm}

\noindent
As shown in the proof of Theorem \ref{thm:neggrunwald}, condition ($*$) holds if $G$ has a non-cyclic abelian subgroup. However, the converse is true, as it is easy to see that any subgroup $H$ of $G$ as in condition ($*$) has to be abelian.
\end{remark}

As an immediate consequence of Theorem \ref{thm:neggrunwald}, the answer to Question \ref{question} is always negative for finite non-cyclic abelian groups, dihedral groups $D_n$ (of order $2n$) with $n$ even, symmetric groups $S_n$ with $n \geq 4$, alternating groups $A_n$ with $n \geq 4$. In Corollary \ref{simple} below, we show that the same is true for arbitrary finite non-abelian simple groups:

\begin{corollary} \label{simple}
Assume that $G$ is a finite non-abelian simple group. Then, the answer to Question \ref{question} is negative for all finite sets of $k$-regular $G$-extensions of $k(T)$.
\end{corollary}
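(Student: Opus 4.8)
The plan is to deduce Corollary~\ref{simple} directly from Theorem~\ref{thm:neggrunwald}. Since that theorem already returns a negative answer to Question~\ref{question} for every finite group possessing a non-cyclic abelian subgroup, the entire task reduces to the purely group-theoretic assertion that \emph{every finite non-abelian simple group $G$ contains a non-cyclic abelian subgroup}. I would establish this by exhibiting a copy of $\Zz/2\Zz\times\Zz/2\Zz$ inside a Sylow $2$-subgroup of $G$.

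First I would note that a finite non-abelian simple group is never a $p$-group: a nontrivial $p$-group has nontrivial center, and a simple group with nontrivial center is cyclic of prime order, hence abelian. By the Feit--Thompson theorem $|G|$ is therefore even, so a Sylow $2$-subgroup $P$ of $G$ is nontrivial and proper. I would then invoke the classical structure theorem that a finite $2$-group in which every abelian subgroup is cyclic is itself either cyclic or generalized quaternion; equivalently, any $2$-group that is neither cyclic nor generalized quaternion contains a subgroup isomorphic to $\Zz/2\Zz\times\Zz/2\Zz$. Thus it remains only to exclude the two degenerate possibilities for $P$. If $P$ were cyclic, Burnside's normal $p$-complement theorem would furnish a normal $2$-complement $N\trianglelefteq G$; as $G$ is not a $2$-group, $N$ would be a nontrivial proper normal subgroup, contradicting simplicity. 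If $P$ were generalized quaternion, the Brauer--Suzuki theorem would force $G/O(G)$ to possess a central involution, where $O(G)$ denotes the largest normal subgroup of odd order; since $G$ is simple of even order we have $O(G)=1$, whence $Z(G)\neq 1$, again contradicting that $G$ is non-abelian simple.

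Consequently $P$ is neither cyclic nor generalized quaternion, hence contains $\Zz/2\Zz\times\Zz/2\Zz$, a non-cyclic abelian subgroup of $G$, and Theorem~\ref{thm:neggrunwald} then applies to yield the claim. The only substantial external input is the Brauer--Suzuki theorem, used to dispose of the generalized quaternion case; this is the step I expect to be the main obstacle to a self-contained argument, since the cyclic case follows from elementary transfer (Burnside) and the reduction to the Sylow $2$-subgroup is routine. One could instead appeal to the classification of finite simple groups to read off the Sylow $2$-structure directly, but routing through Brauer--Suzuki keeps the proof classification-free apart from Feit--Thompson.
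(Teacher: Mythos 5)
Your proposal is correct, and the reduction to finding a non-cyclic abelian subgroup via Theorem~\ref{thm:neggrunwald} is exactly the paper's first step; but the group-theoretic core is handled by a genuinely different (and equally classification-free) route. The paper argues by contradiction for all primes at once: if $G$ had no non-cyclic abelian subgroup, then by Cartan--Eilenberg every Sylow subgroup would be cyclic or generalized quaternion; the all-cyclic case is excluded by H\"older's 1895 theorem (such groups are metacyclic, hence solvable), and the remaining case is excluded by invoking Suzuki's Theorem~C, which forces $G$ to be of the form $H'\times{\rm{SL}}_2(\mathbb{F}_p)$ up to index $2$, incompatible with non-abelian simplicity. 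You instead localize entirely at the prime $2$: Feit--Thompson gives $|G|$ even, Burnside's normal $p$-complement theorem rules out a cyclic Sylow $2$-subgroup, the Brauer--Suzuki theorem rules out a generalized quaternion one, and the standard structure theorem for $2$-groups then produces a Klein four subgroup. Each of your steps is sound (in particular, for cyclic $P$ the quotient $N_G(P)/C_G(P)$ is simultaneously a $2$-group and of odd order, so Burnside applies; and for generalized quaternion $P$, simplicity forces $O(G)=1$, so Brauer--Suzuki yields a nontrivial center). The trade-off is which deep input one accepts: your version leans on Brauer--Suzuki and Feit--Thompson, whereas the paper leans on Suzuki's 1955 classification and avoids Feit--Thompson altogether via H\"older. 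Both deliver the same conclusion, and your argument could be substituted for the paper's without loss.
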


\begin{proof}
By Theorem \ref{thm:neggrunwald}, it suffices to show that $G$ has a non-cyclic abelian subgroup. Suppose by contradiction that $G$ does not. Then, by, e.g., \cite[Chapter XI, Theorem 11.6]{CE56}, every Sylow subgroup of $G$ is either cyclic or a generalized quaternion group. If every 2-Sylow subgroup of $G$ was cyclic, then, every Sylow subgroup of $G$ would then be cyclic. As H\"older proved in 1895 (see, e.g., \cite[Theorem 7.53]{Rot95} for more details), $G$ would be solvable\footnote{More precisely, any given finite group whose every Sylow subgroup is cyclic is metacyclic; see, e.g., \cite[page 290]{Rob96}.}, which cannot happen. One may then apply \cite[Theorem C]{Suz55} to get that $G$ has a normal subgroup $H$ which satisfies the following two conditions:
\begin{itemize}
\item[(1)] $(G:H) \leq 2$,
\item[(2)] $H = H' \times {\rm{SL}}_2(\mathbb{F}_p)$ for some prime number $p$ and some subgroup $H'$ of $H$ whose Sylow subgroups are cyclic. 
\end{itemize}
If $(G:H) = 2$, then, $H$ is trivial as $G$ is simple. Hence, $G$ has order 2, which cannot happen. Then, by (1), we get $G = H' \times {\rm{SL}}_2(\mathbb{F}_p)$, with $H'$ and $p$ as in (2). As $G$ is simple and ${\rm{SL}}_2(\mathbb{F}_p) \not= \{1\}$, we get $H'=\{1\}$, i.e., $G={\rm{SL}}_2(\mathbb{F}_p)$. Hence, ${\rm{SL}}_2(\mathbb{F}_p)$ is simple, which cannot happen as ${\rm{SL}}_2(\mathbb{F}_2) \cong S_3$, and, for $p \geq 3$, the center of ${\rm{SL}}_2(\mathbb{F}_p)$ has order 2.
\end{proof}

Solvability of Grunwald problems via specialization is being investigated further by the first author, in a context of infinite families of regular extensions. See \cite{Koe17b}.

\section{Application to finite parametric sets} \label{sec:parametric}

This section is devoted to our application to the non-existence of finite parametric sets over number fields, as already mentioned in \S\ref{sec:intro_param}. In \S\ref{sec:parametric_criterion}, we state and prove Theorem \ref{grunwald_nonparam}, which is our new general criterion to provide examples of finite groups with no finite parametric set over number fields. Explicit examples, including those given in Theorem \ref{parametric_boohoo} from the introduction (\S\ref{sec:intro_param}), are then given in \S\ref{sec:parametric_examples}.

For this section, let $k$ be a number field, $R$ its ring of integers, $T$ an indeterminate over $k$, and $G$ a finite group.

Let us recall the following definition \cite{KL18}:

\begin{definition} \label{def para}
We say that a set $S$ of $k$-regular $G$-extensions of $k(T)$ is {\textit{parametric}} if every $G$-extension of $k$ occurs as a specialization of some extension $E/k(T)$ in $S$.
\end{definition}

It is shown in \cite{KL18} that certain finite groups do not possess finite parametric sets over the given number field $k$. However, the ``global" strategy developed in that paper requires such finite groups to have a non-trivial proper normal subgroup which satisfies some further properties. In particular, this cannot be used for finite simple groups. 

\subsection{A new general criterion for non-parametricity} \label{sec:parametric_criterion}

Here, we rather use a ``local" approach. Namely, in Theorem \ref{grunwald_nonparam} below, we provide a sufficient condition, based on the proof of Theorem \ref{thm:neggrunwald}, for the finite group $G$ to have no finite parametric set over $k$.

Given a prime number $q$, let $\zeta_q$ denote a primitive $q$-th root of unity.

\begin{theorem} \label{grunwald_nonparam}
Suppose that the following condition holds:

\vspace{1mm}

\noindent
{\rm{($**$)}} there exists a prime number $q$ and a number field $F$ containing $k(\zeta_q)$ such that, for all but finitely non-zero prime ideals $\fp$ of $R$ which are totally split in $F/k$, there exists a $G$-extension of $k$ whose completion at $\fp$ has Galois group $\Zz/q\Zz \times \Zz/q\Zz$. 

\vspace{1mm}

\noindent
Then, given a finite set $S$ of $k$-regular $G$-extensions of $k(T)$, there exist infinitely many $G$-extensions of $k$ each of which is a specialization of no extension $E/k(T)$ in $S$. In particular, $G$ has no finite parametric set over $k$.
\end{theorem}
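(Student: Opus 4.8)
The plan is to combine Proposition \ref{inf_many_cyclic} with hypothesis ($**$), exactly as in the proof of Theorem \ref{thm:neggrunwald}, and then to upgrade the resulting single counterexample into an infinite family. So I would fix a finite set $S=\{E_1/k(T),\dots,E_s/k(T)\}$ of $k$-regular $G$-extensions, and let $F_0$ be the compositum of all the residue fields $(E_j(t))_t$ at the branch points $t$ of the extensions $E_j/k(T)$, $1\le j\le s$. The guiding idea is a dichotomy at totally split primes: on the one hand, Proposition \ref{inf_many_cyclic} forces the decomposition group at such a prime of every specialization of every $E_j/k(T)$ to be cyclic; on the other hand, hypothesis ($**$) produces $G$-extensions of $k$ whose completion at such a prime is $\Zz/q\Zz \times \Zz/q\Zz$, which is non-cyclic. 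These two facts are incompatible, and this is what prevents $S$ from being parametric.

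Concretely, I would first apply Proposition \ref{inf_many_cyclic} to each $E_j/k(T)$ separately; since a prime totally split in $F_0/k$ is totally split in each of the $s$ factors, it follows that, for every prime $\fp$ of $R$ totally split in $F_0/k$ and outside a finite exceptional set $\mathcal{S}_{\rm{exc}}$ depending only on $S$, the decomposition group at $\fp$ of any specialization of any $E_j/k(T)$ is cyclic. Next, let $q$ and $F\supseteq k(\zeta_q)$ be as in ($**$), and consider the compositum $F\cdot F_0$. By the Chebotarev density theorem there are infinitely many primes $\fp$ of $R$ totally split in $F\cdot F_0/k$; I discard the finitely many lying in $\mathcal{S}_{\rm{exc}}$ or in the finite exceptional set from ($**$). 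Each remaining $\fp$ is totally split in both $F/k$ and $F_0/k$, so ($**$) supplies a $G$-extension $L_\fp/k$ whose completion at $\fp$ has Galois group $\Zz/q\Zz\times\Zz/q\Zz$. Were $L_\fp/k$ a specialization of some $E_j/k(T)$, its completion at $\fp$ would have cyclic Galois group by the previous paragraph, a contradiction; hence $L_\fp/k$ is a specialization of no extension in $S$.

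It then remains to extract infinitely many distinct extensions from the family $(L_\fp)_\fp$. The key observation is that a non-cyclic completion forces ramification: since every unramified extension of a local field with finite residue field is cyclic, the condition $\Gal((L_\fp)_\fp/k_\fp)\cong \Zz/q\Zz\times\Zz/q\Zz$ implies that $\fp$ ramifies in $L_\fp/k$. As any fixed $G$-extension of $k$ ramifies at only finitely many primes, a given extension can coincide with $L_\fp$ for only finitely many of our primes $\fp$. Since there are infinitely many admissible $\fp$, the family $(L_\fp)_\fp$ therefore contains infinitely many distinct $G$-extensions of $k$, each a specialization of no extension in $S$. This proves the first assertion, and the statement that $G$ has no finite parametric set over $k$ is then immediate from Definition \ref{def para}.

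The substance of the argument, namely the cyclic-versus-non-cyclic dichotomy, is already contained in the proof of Theorem \ref{thm:neggrunwald}, so I expect the only genuinely new point, and hence the main thing to get right, to be the final infinitude step: one must verify that the $G$-extensions produced by ($**$) at distinct split primes cannot all collapse to finitely many fields. The ramification remark above settles this cleanly, but it is worth stating explicitly, since without it ($**$) would only guarantee a single non-specialization rather than an infinite supply.
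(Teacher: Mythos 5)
Your argument is correct and follows the same route as the paper's proof: combine Proposition \ref{inf_many_cyclic}, applied at primes totally split in the compositum of $F$ with the residue fields at the branch points of the extensions in $S$, with the non-cyclic local extensions supplied by ($**$). The only difference is that you make explicit the final infinitude step (via the observation that a completion with Galois group $\Zz/q\Zz\times\Zz/q\Zz$ forces ramification at $\fp$, so no single $G$-extension can account for more than finitely many of the chosen primes), a point the paper's proof leaves implicit; this is a worthwhile clarification rather than a divergence.
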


\begin{proof}
Given a positive integer $s$, let $E_1/k(T), \dots, E_s/k(T)$ be $k$-regular $G$-extensions. Pick a prime number $q$ and a number field $F$ as in condition {\rm{($**$)}}. In particular, $\Zz/q\Zz \times \Zz/q\Zz$ occurs as a subgroup of $G$.
By Proposition \ref{inf_many_cyclic}, there exist infinitely many prime ideals $\fp$ of $R$ which are totally split in $F/k$,
and such that the Galois group of the completion at $\fp$ of any specialization of $E_i/k(T)$ is cyclic ($i=1,\dots,s$). Indeed, one may take all prime ideals $\fp$ which are totally split in the compositum of $F$ and all the residue extensions at branch points of $E_i/k(T)$, $i=1,\dots,s$ (up to finitely many exceptions depending only on $E_1/k(T), \dots, E_s/k(T)$). In particular, for such a $\fp$, a $G$-extension of $k$ as in condition {\rm{($**$)}} is a specialization of $E_i/k(T)$ for no $i \in \{1,\dots,s\}$, completing the proof.
\end{proof}

\subsection{Explicit examples} \label{sec:parametric_examples}

Corollary \ref{examples} below contains our main examples of finite groups with no finite parametric set over number fields:

\begin{corollary} \label{examples}
Assume that either one of the following two conditions holds:
\begin{itemize}
\item[(1)] $G$ has a non-cyclic abelian subgroup, and there exists a finite set $\mathcal{S}_{\rm{exc}}$ of non-zero prime ideals of $R$ such that every Grunwald problem $(G, (L^{(\fp)}/k_\fp)_{\fp\in \mathcal{S}})$, with $\mathcal{S}$ disjoint from $\mathcal{S}_{\rm{exc}}$, has a solution.
\item[(2)] $G=A_n$ ($n \geq 4$).
\end{itemize}
Then, $G$ has no finite parametric set over $k$.
\end{corollary}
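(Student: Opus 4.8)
The plan is to deduce both cases from Theorem~\ref{grunwald_nonparam} by verifying its hypothesis $(\ast\ast)$: in each case it suffices to produce a prime number $q$ and a number field $F$ containing $k(\zeta_q)$ such that, for all but finitely many primes $\fp$ of $R$ that are totally split in $F/k$, some $G$-extension of $k$ has completion at $\fp$ with Galois group $\Zz/q\Zz\times\Zz/q\Zz$. Once $(\ast\ast)$ is checked, Theorem~\ref{grunwald_nonparam} yields that $G$ has no finite parametric set over $k$.

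For case~(1), I would first reduce the given non-cyclic abelian subgroup to the model case: a non-cyclic finite abelian group contains a copy of $H:=\Zz/q\Zz\times\Zz/q\Zz$ for some prime number $q$, exactly as in the proof of Theorem~\ref{thm:neggrunwald}. Set $F:=k(\zeta_q)$. For a prime $\fp\notin\mathcal{S}_{\rm{exc}}$ totally split in $F/k$ one has $\zeta_q\in k_\fp$, so the local construction used in the proof of Theorem~\ref{thm:neggrunwald} (the compositum of the unramified degree-$q$ extension of $k_\fp$ with a totally ramified cyclic degree-$q$ Kummer extension) furnishes a Galois extension $L^{(\fp)}/k_\fp$ with group $H$, which embeds into $G$. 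The Grunwald-solvability assumption of case~(1), applied to the one-prime family $\mathcal{S}=\{\fp\}$, then produces a $G$-extension of $k$ whose completion at $\fp$ equals $L^{(\fp)}/k_\fp$, and in particular has Galois group $H$. This verifies $(\ast\ast)$.

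For case~(2) I would take $q=2$ and use that $A_n$ contains the Klein four-group $V_4\cong\Zz/2\Zz\times\Zz/2\Zz$ for every $n\geq 4$. Since $\zeta_2=-1\in k$, the requirement $k(\zeta_2)\subseteq F$ is automatic, and $F$ may be taken to be $k$ (or whatever field the construction below imposes). The relevant instance of $(\ast\ast)$ then asserts that, for all but finitely many primes $\fp$ of $k$, there is an $A_n$-extension of $k$ whose completion at $\fp$ has Galois group $V_4$. This is the ``very weak Grunwald property'' alluded to in the introduction, and I would derive it from Mestre's construction \cite{Mes90} of regular $A_n$-extensions, whose flexibility supplies, for almost every $\fp$, an $A_n$-extension realizing a $V_4$-completion at $\fp$ (of tame type, an unramified $\Zz/2\Zz$ times a ramified $\Zz/2\Zz$, when $\fp\nmid 2$, and of wild type when $\fp\mid 2$). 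Feeding this into Theorem~\ref{grunwald_nonparam} settles case~(2).

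The hard part will be case~(2). In contrast with case~(1), no full Grunwald-solvability statement is available for $A_n$, and the specialization results of this paper are not by themselves enough: by Theorem~\ref{chebotarev} a fixed regular $A_n$-extension produces $V_4$ as a decomposition group only for \emph{infinitely many} primes, whereas $(\ast\ast)$ demands it for \emph{cofinitely many} split primes. The reason a single extension cannot suffice is that, for a branch point $t_i$ with inertia $I_{t_i}\cong\Zz/2\Zz$, Theorem~\ref{thm main} identifies the local group of a meeting specialization with $\varphi^{-1}(D_{t_i,\fp'})$, so whether one obtains $V_4$ is governed by the Frobenius of $\fp$ in the residue field $(E(t_i))_{t_i}$ at $t_i$ --- a condition that fails on a set of positive density and that no splitting condition on $F$ can force to hold. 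The essential content is therefore to extract from Mestre's family of realizations a genuinely uniform (almost-everywhere-local) $V_4$-realization; pinning down the exact shape of $(\ast\ast)$ that this family yields, together with the auxiliary field $F$ it dictates, is the main technical obstacle.
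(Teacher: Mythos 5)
Your reduction of both cases to condition $(\ast\ast)$ of Theorem~\ref{grunwald_nonparam} is the right frame, and your treatment of case~(1) is exactly the intended argument (the paper dismisses it as ``clear'', and your elaboration --- pass to $H=\Zz/q\Zz\times\Zz/q\Zz$, take $F=k(\zeta_q)$, build the local $H$-extension as in the proof of Theorem~\ref{thm:neggrunwald}, and invoke the assumed Grunwald solvability at the single prime $\fp$ --- is what is meant). Case~(2), however, is left with a genuine gap: you correctly state what must be proved, attribute it to the ``flexibility'' of Mestre's construction, and then explicitly defer the verification as ``the main technical obstacle''. That obstacle is precisely the content of the corollary in this case, so the proof is not complete.

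Moreover, your framing of the missing step points in a direction different from (and harder than) the paper's. You appear to seek a uniform local analysis of Mestre's regular extensions (decomposition groups at branch points, Frobenius conditions), and you correctly observe that no single regular extension can produce $V_4$ at cofinitely many split primes. But $(\ast\ast)$ only asks for \emph{some} $G$-extension of $k$ with the prescribed local group at $\fp$; it need not come from a fixed regular extension. The paper's mechanism is: (a) for each prime $\fp$, choose a $V_4$-extension $L^{(\fp)}/k_\fp$ and solve the corresponding Grunwald problem for $V_4$ over $k$ globally, obtaining a $V_4$-extension $L/k$ with completion $L^{(\fp)}$ at $\fp$ (no exceptional primes arise here); (b) by Mestre's theorem (\cite{Mes90}, \cite[Theorem 3]{KM01}), $L/k$ is the specialization at some point of a $k$-regular $A_n$-extension $E/k(T)$ --- with $E$ allowed to depend on $L$, hence on $\fp$; (c) by Lemma~\ref{PV} (Krasner plus weak approximation), perturb the specialization point to $t_0\in k$ so that $E_{t_0}/k$ has full Galois group $A_n$ while its completion at $\fp$ remains $L^{(\fp)}/k_\fp$, which has group $V_4$. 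This verifies $(\ast\ast)$ with $q=2$ and $F=k$ for \emph{every} prime $\fp$, with no density or Frobenius condition to control. Steps (a)--(c) are the missing ingredients in your proposal.
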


\begin{proof}
In either case, it suffices to show that condition ($**$) of Theorem \ref{grunwald_nonparam} holds. This is clear in case (1). Now, assume that we are in case (2). Then, $\Zz/2\Zz \times \Zz/2\Zz$ occurs as a subgroup of $G$. It then suffices to show that, for all prime ideals $\fp$ of $R$, there exists a $G$-extension of $k$ whose completion at $\fp$ has Galois group $\Zz/2\Zz \times \Zz/2\Zz$. Let $\fp$ be a prime ideal of $R$, and let $L^{(\fp)}/k_\fp$ be a $\Zz/2\Zz \times \Zz/2\Zz$-extension. Pick a $\Zz/2\Zz \times \Zz/2\Zz$-extension $L/k$ whose completion at $\fp$ is $L^{(\fp)}/k_\fp$. By a classical result of Mestre (see \cite{Mes90} and \cite[Theorem 3]{KM01}), $L/k$ occurs as a specialization of some $k$-regular $A_n$-extension $E/k(T)$. More precisely, there is a polynomial $P(T,Y) \in k[T][Y]$ which is monic and separable in $Y$, with splitting field $E$ over $k(T)$, and such that $P(0,Y)$ is separable with splitting field $L$ over $k$. Lemma \ref{PV} now ensures the existence of $t_0\in k$ such that the specialization of $E/k(T)$ at $t_0$ has Galois group $A_n$ and has the same completion at $\fp$ as $L/k$. This completes the proof.
\end{proof}

\begin{remark} \
(1) Aside from alternating groups, 
we are not aware of any other infinite family of non-abelian simple groups which have been shown to satisfy condition ($**$) of Theorem \ref{grunwald_nonparam}. However, by using the same tools, the following variant is derived:

\vspace{1mm}

\noindent
{\textit{Let $G$ be a finite non-abelian simple group, and let $p$ be a prime number such that $H:=\Zz/p\Zz \times \Zz/p\Zz$ occurs as a subgroup of $G$ \footnote{Such a prime number $p$ exists by the proof of Corollary \ref{simple}.}. Then, given a finite set $S$ of $k$-regular $G$-extensions of $k(T)$, there exist infinitely many $H$-extensions of $k$ each of which is a specialization of $E/k(T)$ for no $E/k(T) \in S$.}}

\vspace{0.5mm}

\noindent (2) Under the expectation of \cite[\S1]{Har07} mentioned in \S\ref{sec:intro_grunwald}, condition (1) of Corollary \ref{examples} holds (and then the conclusion that $G$ has no finite parametric set over $k$ holds as well), provided $G$ is solvable and has a non-cyclic abelian subgroup.
\end{remark}

\bibliography{Biblio2}

\newcommand{\etalchar}[1]{$^{#1}$}
\begin{thebibliography}{ABGV11}

\bibitem[ABGV11]{ABGV11}
Asher Auel, Eric Brussel, Skip Garibaldi, and Uzi Vishne.
\newblock Open problems on central simple algebras.
\newblock {\em Transform. Groups}, 16(1):219--264, 2011.

\bibitem[AS01]{AS01}
Elizabeth Allman and Murray Schacher.
\newblock Division algebras with {P}{S}{L}$(2,q)$-{G}alois maximal subfields.
\newblock {\em J. Algebra}, 240(2):808--821, 2001.

\bibitem[BBC12]{BBC12}
Jason Bell, Nils Bruin, and Michael Coons.
\newblock Transcendence of generating functions whose coefficients are
  multiplicative.
\newblock {\em Trans. Amer. Math. Soc.}, 364(2):933--959, 2012.

\bibitem[Bec91]{Bec91}
Sybilla Beckmann.
\newblock On extensions of number fields obtained by specializing branched
  coverings.
\newblock {\em J. Reine Angew. Math.}, 419:27--53, 1991.

\bibitem[C{\etalchar{+}}85]{ATL}
J~.H. Conway et~al.
\newblock {\em Atlas of finite groups. Maximal subgroups and ordinary
  characters for simple groups. With computational assistance from J. G.
  Thackray}.
\newblock Oxford University Press, Eynsham, 1985.

\bibitem[CE56]{CE56}
Henri Cartan and Samuel Eilenberg.
\newblock {\em Homological algebra}.
\newblock {P}rinceton {U}niversity {P}ress. Princeton, N.J., 1956.
\newblock xv+390 pp.

\bibitem[Con00]{Con00}
Brian Conrad.
\newblock Inertia groups and fibers.
\newblock {\em J. Reine. Angew. Math.}, 522:1--26, 2000.

\bibitem[D{\`e}b01]{Deb01b}
Pierre D{\`e}bes.
\newblock Th\'eorie de {G}alois et g\'eom\'etrie: une introduction.
\newblock In {\em Arithm\'etique des rev\^etements alg\'ebriques
  ({S}aint-{\'E}tienne, 2000)}, volume~5 of {\em S\'emin. Congr.}, pages 1--26.
  Soc. Math. France, Paris, 2001.

\bibitem[Dem10]{Dem10}
Cyril Demarche.
\newblock Groupe de {B}rauer non ramifi\'{e} d'espaces homog\`{e}nes \`{a}
  stabilisateurs finis. ({F}rench).
\newblock {\em Math. Ann.}, 346(4):949--968, 2010.

\bibitem[DG11]{DG11}
Pierre D\`ebes and Nour Ghazi.
\newblock Specializations of {G}alois covers of the line.
\newblock In {\em ``{A}lexandru {M}yller" {M}athematical {S}eminar}, volume
  1329 of {\em AIP Conf. Proc.}, pages 98--108. Amer. Inst. Phys., Melville,
  NY, 2011.

\bibitem[DG12]{DG12}
Pierre D{\`e}bes and Nour Ghazi.
\newblock Galois covers and the {H}ilbert-{G}runwald property.
\newblock {\em Ann. Inst. Fourier (Grenoble)}, 62(3):989--1013, 2012.

\bibitem[DLAN17]{DLN17}
Cyril Demarche, Giancarlo Lucchini~Arteche, and Danny Neftin.
\newblock The {G}runwald problem and approximation properties for homogeneous
  spaces.
\newblock {\em Ann. Inst. Fourier (Grenoble)}, 67(3):1009--1033, 2017.

\bibitem[Eis95]{Eis95}
David Eisenbud.
\newblock {\em Commutative algebra. With a view toward algebraic geometry},
  volume 150 of {\em Graduate {T}exts in {M}athematics}.
\newblock Springer-Verlag, New York, 1995.

\bibitem[FS91]{FS91}
Burton Fein and Murray Schacher.
\newblock $\mathbb{Q}$-admissibility questions for alternating groups.
\newblock {\em J. Algebra}, 142(2):360--382, 1991.

\bibitem[GS79]{GS79}
Basil Gordon and Murray Schacher.
\newblock The admissibility of ${A}_5$.
\newblock {\em J. Number Theory}, 11(4):498--504, 1979.

\bibitem[Har07]{Har07}
David Harari.
\newblock Quelques propri\'et\'es d'approximation reli\'ees \`a la cohomologie
  galoisienne d'un groupe alg\'ebrique fini. ({F}rench).
\newblock {\em Bull. Soc. Math. France}, 135(4):549--564, 2007.

\bibitem[JLY02]{JLY02}
Christian~U. Jensen, Arne Ledet, and Noriko Yui.
\newblock {\em Generic polynomials. Constructive Aspects of the Inverse Galois
  Problem}.
\newblock Cambridge University Press, 2002.

\bibitem[KL18]{KL18}
Joachim K\"onig and Fran\c{c}ois Legrand.
\newblock Non-parametric sets of regular realizations over number fields.
\newblock {\em J. Algebra}, 497:302--336, 2018.

\bibitem[KM01]{KM01}
J\"urgen Kl\"uners and Gunter Malle.
\newblock A database for field extensions of the rationals.
\newblock {\em LMS J. Comput. Math.}, 4:182--196, 2001.

\bibitem[KN17]{KN17}
Joachim K{\"{o}}nig and Danny Neftin.
\newblock The admissibility of {M}$_{11}$ over number fields.
\newblock 2017.
\newblock To appear in {J}ournal of {P}ure and {A}pplied {A}lgebra.

\bibitem[K{\"{o}}n17a]{Koe17b}
Joachim K{\"{o}}nig.
\newblock The {G}runwald problem and specialization of families of regular
  {G}alois extensions.
\newblock {\em Manuscript}, 2017.
\newblock arXiv 1710.05548.

\bibitem[K{\"{o}}n17b]{Koe17a}
Joachim K{\"{o}}nig.
\newblock Non-parametricity of rational translates of regular {G}alois
  extensions.
\newblock {\em Acta. Arith.}, 179(3):267--275, 2017.

\bibitem[LA14]{LA14}
Giancarlo Lucchini~Arteche.
\newblock Approximation faible et principe de {H}asse pour des espaces
  homog\`{e}nes \`{a} stabilisateur fini r\'{e}soluble. ({F}rench).
\newblock {\em Math. Ann.}, 360(3-4):1021--1039, 2014.

\bibitem[Leg16a]{Leg16a}
Fran\c{c}ois Legrand.
\newblock On parametric extensions over number fields.
\newblock 2016.
\newblock To appear in Annali della Scuola Normale Superiore di Pisa - Classe
  di Scienze. arXiv 1602.06706.

\bibitem[Leg16b]{Leg16c}
Fran\c{c}ois Legrand.
\newblock Specialization results and ramification conditions.
\newblock {\em Israel J. Math.}, 214(2):621--650, 2016.

\bibitem[Mes90]{Mes90}
Jean-{F}ran\c{c}ois Mestre.
\newblock Extensions r\'eguli\`eres de $\mathbb{Q}({T})$ de groupe de {G}alois
  $\tilde{A}_n$.
\newblock {\em J. Algebra}, 131(2):483--495, 1990.

\bibitem[MM99]{MM99}
Gunter Malle and B.~Heinrich Matzat.
\newblock {\em Inverse Galois Theory}.
\newblock Springer Monographs in Mathematics. Springer-Verlag, Berlin, 1999.

\bibitem[NSW08]{NSW08}
J\"urgen Neukirch, Alexander Schmidt, and Kay Wingberg.
\newblock {\em Cohomology of Number Fields}, volume 323 of {\em Grundlehren der
  mathematischen Wissenschaften}.
\newblock Springer, Berlin, second edition, 2008.

\bibitem[PV05]{PV05}
Bernat Plans and N\'{u}ria Vila.
\newblock {G}alois covers of $\mathbb{P}^1$ over $\mathbb{Q}$ with prescribed
  local or global behavior by specialization.
\newblock {\em J. Th\'eor. Nombres Bordeaux}, 17(1):271--282, 2005.

\bibitem[Rob96]{Rob96}
Derek J.~S. Robinson.
\newblock {\em A Course in the Theory of Groups}, volume~80 of {\em Graduate
  {T}exts in {M}athematics}.
\newblock Springer-Verlag, New York, second edition, 1996.
\newblock xviii+499 pp.

\bibitem[Rot95]{Rot95}
Joseph~J. Rotman.
\newblock {\em An Introduction to the Theory of Groups}, volume 148 of {\em
  Graduate {T}exts in {M}athematics}.
\newblock Springer-Verlag, New York, fourth edition, 1995.
\newblock xvi+513 pp.

\bibitem[Sal82]{Sal82}
David~J. Saltman.
\newblock Generic {G}alois extensions and problems in field theory.
\newblock {\em Adv. in Math.}, 43(3):250--283, 1982.

\bibitem[Sch68]{Sch68}
Murray~M. Schacher.
\newblock Subfields of division rings. {I}.
\newblock {\em J. Algebra}, 9:451--477, 1968.

\bibitem[Ser79]{Ser79}
Jean-Pierre Serre.
\newblock {\em Local fields}, volume~67 of {\em Graduate {T}exts in
  {M}athematics}.
\newblock Springer-Verlag, New York-Berlin, 1979.
\newblock Translated from the {F}rench by {M}arvin {J}ay {G}reenberg. viii+241
  pp.

\bibitem[Ser92]{Ser92}
Jean-Pierre Serre.
\newblock {\em Topics in Galois Theory}, volume~1 of {\em Research Notes in
  Mathematics}.
\newblock Jones and Bartlett Publishers, Boston, MA, 1992.

\bibitem[Suz55]{Suz55}
Michio Suzuki.
\newblock On finite groups with cyclic {S}ylow subgroups for all odd primes.
\newblock {\em Amer. J. Math.}, 77:657--691, 1955.

\bibitem[Wan48]{Wan48}
Shianghaw Wang.
\newblock A counter-example to {G}runwald's theorem.
\newblock {\em Ann. of Math. (2)}, 49:1008--1009, 1948.

\bibitem[Wan50]{Wan50}
Shianghaw Wang.
\newblock On {G}runwald's theorem.
\newblock {\em Ann. of Math. (2)}, 51:471--484, 1950.

\bibitem[Zas35]{Zas35}
Hans Zassenhaus.
\newblock {\"{U}}ber endliche {F}astk\"orper. ({G}erman).
\newblock {\em Abh. Math. Sem. Univ. Hamburg}, 11(1):187--220, 1935.

\bibitem[Zyw14]{Zyw14}
David Zywina.
\newblock The inverse {G}alois problem for orthogonal groups.
\newblock {\em {M}anuscript}, 2014.
\newblock arXiv 1409.1151.

\end{thebibliography}
\bibliographystyle{alpha}

\end{document}